\newtheorem{theorem}{Theorem}
\newtheorem{lemma}{Lemma}
\begin{document}

\begin{frontmatter}

\title{Inverse Stefan problems of determining the time-dependent source coefficient and heat flux function} 


\author{Targyn A. Nauryz$^{1,2}$, Khumoyun Jabbarkhanov$^3$}

\address{(1) Kazakh-British Technical University, Almaty, Kazakhstan\\
(2)Institute of Mathematics and Mathematical Modeling, Almaty, Kazakhstan\\
(3)Nazarbayev University, Astana, Kazakhstan}

\begin{abstract}
This paper delves into the Inverse Stefan problem, specifically focusing on determining the time-dependent source coefficient in the parabolic heat equation governing heat transfer in a semi-infinite rod. The problem entails the intricate task of uncovering both temperature- and time-dependent coefficients of the source while accommodating Dirichlet and Neumann boundary conditions. Through a comprehensive mathematical model and rigorous theoretical analysis, our study aims to provide a robust methodology for accurately determining the source coefficient from observed temperature and heat flux data in problems with different cases of the source functions. Importantly, we establish the existence and uniqueness, and estimate the continuous dependence of a weak solution upon the given data for some inverse problems, offering a foundational understanding of its solvability. 
\end{abstract}

\begin{keyword}
Inverse Stefan problem \sep parabolic heat equation \sep time-dependent coefficient \sep spectral theory 
\end{keyword}

\end{frontmatter}

\section{Introduction}
The study of heat transfer in materials, particularly in scenarios involving phase changes, is of significant interest in both theoretical and applied sciences. Among the various problems that arise in this domain, the Stefan problem is particularly notable due to its complexity and practical relevance. This paper focuses on a specific variant, known as the inverse Stefan problem, where the objective is to determine a time-dependent source coefficient in the parabolic heat equation governing heat transfer in a semi-infinite rod when free boundary is given. If time-dependent source coefficient is given and free boundary has to be determined, then it is direct Stefan problem. Some direct Stefan problems with real world application are considered in papers \cite{14}-\cite{24}.

The inverse Stefan problem is a type of inverse problem, which involves deducing unknown parameters or functions from observed data. In this context, the challenge is to uncover both temperature and time-dependent coefficients of the source function while accommodating boundary conditions such as Dirichlet (fixed temperature), Neumann (fixed heat flux), and Stefan (phase change interface) conditions. The accurate determination of these coefficients is critical for applications ranging from industrial processes to environmental modeling and medical diagnostics.

Several previous studies have addressed similar inverse problems in parabolic equations. For instance, \cite{1} provided estimates of Dirichlet eigenvalues for sub-elliptic operators, which are foundational for understanding boundary conditions in such problems. The study \cite{2} explored surface-localized transmission eigenstates and their applications in super-resolution imaging, highlighting the importance of eigenvalue problems in applied mathematics. Cannarsa, Tort, and Yamamoto investigated the determination of source terms in degenerate parabolic equations, contributing valuable methodologies for inverse problem-solving, see \cite{3}. Cannon, Lin, and Wang examined the determination of control and source parameters in parabolic partial differential equations, providing essential insights into the challenges and techniques involved, see \cite{4}. The work \cite{5} focused on recovering a time-dependent coefficient in parabolic differential equations, a key aspect relevant to our study.

The complexity of the Inverse Stefan problem stems from the interplay between the heat equation and the boundary conditions, necessitating sophisticated mathematical approaches. Traditional methods often fall short in addressing the intricacies of time-dependent coefficients and boundary conditions simultaneously. Therefore, this paper aims to bridge this gap by developing a comprehensive mathematical model and providing a rigorous theoretical analysis to solve the problem effectively.

Our methodology is based on a spectral theory approach, which leverages the properties of eigenfunctions to solve the parabolic heat equation. This approach not only facilitates the accurate determination of the source coefficient but also ensures the robustness of the solution across different cases of the source functions. However, a significant challenge in this methodology is the verification of the boundedness of the sequence of normalized eigenfunctions under the given boundary conditions, including Dirichlet, Neumann, and Stefan conditions. Addressing this challenge is crucial for the validity and reliability of our proposed solution.

Additionally, the existence of weak solutions for each inverse problem is established, providing a theoretical foundation for understanding the solvability of the Inverse Stefan problem. This foundational understanding is essential for the advancement of both theoretical research and practical applications in the field.

Our work builds on the extensive research of others, such as \cite{6}, who provided estimates of eigenfunctions of Laplace’s operator in specific domains, an important aspect for understanding boundary behavior. The paper \cite{7} addressed inverse time-dependent source problems for the heat equation with nonlocal boundary conditions, offering valuable insights into similar challenges. \cite{8} discussed inverse problems for parabolic-type equations, contributing to the broader theoretical framework. Karazym, Ozawa, and Suragan explored multidimensional inverse Cauchy problems for evolution equations, highlighting advanced methodologies in the field, see \cite{9}. Lesnic provided a comprehensive overview of inverse problems with applications in science and engineering, underscoring the practical relevance of such studies \cite{10}. Additionally, the work of Ismailov, Ozawa, and Suragan on identifying time-dependent source coefficients for subelliptic heat equations is particularly relevant, as it addresses similar challenges in inverse problems and extends our understanding of the field, see \cite{11}.

Let us consider the mathematical formulation of the current problem. Given $T>0$ an arbitrary fixed time interest and in domain $\Omega=(0,s(t))$, we consider the following inverse Stefan problem:

\begin{equation}\label{eq1}
    \partial_t u-a^2\partial_{xx} u=G(x,t,u),\quad (x,t)\in\Omega\times (0,T)
\end{equation}
with initial condition
\begin{equation}\label{eq2}
    u(x,0)=\varphi(x),\quad s(0)=s_0,\quad x\in [0,s_0],
\end{equation}
and boundary conditions
\begin{equation}\label{eq3}
    u(0,t)=0,\quad t\in[0,T],
\end{equation}
\begin{equation}\label{eq4}
    u(s(t),t)=u^*,\quad t\in[0,T],
\end{equation}
and subject to the Stefan condition
\begin{equation}\label{eq5}
    -k\partial_x u(s(t),t)=Ls'(t),\quad t\in[0,T],
\end{equation}
where $\varphi(x)$ is a given function, $k$ -- thermal conductivity of the material, $L$ -- latent heat of fusion of the material, $u^*$ -- phase change temperature, $s(t)$ -- given phase change interface and $a^2$-is a thermal diffusivity. The aim of the study is to identifying the source term $G$ together with $u(x,t)$ in different cases of $G$ in two problem formulations with condition \eqref{eq3} and determining the heat flux function changing condition \eqref{eq3} with condition
\begin{equation}\label{eq6}
    -k\partial_x u(0,t)=q(t),\quad t\in[0,T],
\end{equation}
where $q(t)$ has to be determined. 

This work is structured into four main parts, each addressing the inverse Stefan problem under different boundary conditions and formulations of the source term $G(x,t,u)$. The focus is on determining the time-dependent source coefficient within the framework of parabolic heat equations for a semi-infinite rod. In section 2, we tackle the Inverse Stefan problem with Dirichlet boundary conditions, where the source term is given by 
$G(x,t,u)=R(t)f(x,t)$. The Dirichlet boundary condition specifies a fixed temperature at the boundary of the domain. The main objective is to determine the time-dependent coefficient $R(t)$ from the observed temperature data. We develop a mathematical model that incorporates the Dirichlet condition and utilize spectral theory to analyze and solve the problem. The existence of a weak solution is established, which ensures the theoretical solvability of the problem. Section 3 extends the analysis to Neumann boundary conditions, where the source term remains $G(x,t,u)=R(t)f(x,t)$. The Neumann boundary condition involves specifying the heat flux at the boundary, which is a derivative of the temperature. The goal is again to determine the time-dependent coefficient $R(t)$ and heat flux function $q(t)$. We adapt the mathematical model to incorporate Neumann conditions and employ spectral methods to derive the solution. The theoretical framework is developed to confirm the existence of a weak solution under these conditions. In Section 4, we consider a different form of the source term, $G(x,t,u)=P(t)u(x,t)+f(x,t)$, under Dirichlet boundary conditions. Here, the source term depends linearly on the temperature 
$u(x,t)$ as well as a time-dependent function $f(x,t)$. The Dirichlet boundary condition specifies a fixed temperature at the boundary. Our aim is to determine the time-dependent coefficient $P(t)$ from the observed data. We formulate the problem mathematically and use spectral theory to analyze it, demonstrating the existence of a weak solution for this case. The last section focuses on the Inverse Stefan problem with Neumann boundary conditions for the source term
$G(x,t,u)=P(t)u(x,t)+f(x,t)$. The task is to determine the time-dependent coefficient $P(t)$ and heat flux $q(t)$. We adapt our mathematical model to these conditions and apply spectral methods to solve the problem. The existence of a weak solution is established, which confirms the solvability of the problem under the Neumann boundary conditions.

In summary, this paper presents a novel and effective approach to the inverse Stefan problem, contributing valuable insights and methodologies to the field of inverse problems and heat transfer. The proposed model and analysis not only enhance our understanding of the problem, but also pave the way for future advancements in accurately determining source coefficients in various practical contexts. The findings and methodologies are grounded in and extend the work of previous researchers \cite{8}, \cite{11}, \cite{25}, \cite{26}, thus enriching the existing body of knowledge and opening new avenues for research and application.

\section{Inverse Stefan problem 1.}\label{sec1}
If we take the unknown function $G(x,t,u)=R(t)f(x,t)$, the inverse Stefan problem is formulated as the problem of determining the pair $(R(t),u(x,t))$:
\begin{equation}\label{eq7}
    \begin{cases}
        \partial_t u-a^2\partial_{xx}u=R(t)f(x,t), & (x,t)\in\Omega\times(0,T),\\
        u(x,0)=\varphi(x), \quad s(0)=s_0,& x\in[0,s_0],\\
        u(0,t)=0,& t\in[0,T],\\
        u(s(t),t)=u^*,& t\in[0,T],\\
        -k\partial_xu(s(t),t)=Ls'(t), & t\in[0,T],
    \end{cases}
\end{equation}
where $f(x,t), \varphi(x)$ and $s(t)$ are given functions. 
If we introduce the transformation
\begin{equation}\label{eq7a}
    v(x,t)=u(x,t)-\dfrac{u^*}{s(t)}x,
\end{equation}
then problem \eqref{eq7} can be rewritten in the form
\begin{equation}\label{eq7b}
    \begin{cases}
        \partial_t v-a^2\partial_{xx}v=R(t)h(x,t), & (x,t)\in\Omega\times(0,T),\\
        v(x,0)=\psi(x), \quad s(0)=s_0,& x\in[0,s_0],\\
        v(0,t)=0,& t\in[0,T],\\
        v(s(t),t)=0,& t\in[0,T],\\
        \partial_x v(s(t),t)=-\dfrac{L}{k}s'(t)-\dfrac{u^*}{s(t)}, & t\in[0,T],
    \end{cases}
\end{equation}
where $\psi(x)=\varphi(x)-\frac{u^*}{s(t)}x$ and $h(x,t)=f(x,t)+\frac{u^*s'(t)}{R(t)s^2(t)}x$.

The problem \eqref{eq7b} has a moving boundary $s(t)$ which adds complexity. To address this, we introduce a transformation to fix the domain $\Omega_0=(0,1)$:
\begin{equation}\label{eq7c}
    v(x,t)=U(\xi,t),\quad \xi=\dfrac{x}{s(t)},\quad (\xi,t)\in\Omega_0\times(0,T),
\end{equation}
then \eqref{eq7b} can be rewritten
\begin{equation}\label{eq7d}
    \begin{cases}
         \partial_tU-\dfrac{\xi s'(t)}{s(t)}\partial_{\xi}U-\dfrac{a^2}{s^2(t)}\partial_{\xi\xi}U=R(t)\widetilde{h}(\xi,t),\quad (\xi,t)\in\Omega_0\times(0,T),\\
         U(\xi,0)=\widetilde{\psi}(\xi),\quad \xi\in[0,1]\\
         U(0,t)=0,\quad t\in[0,T],\\
         U(1,t)=0,\quad t\in[0,T],\\
         \partial_{\xi}U(1,t)=-\dfrac{L}{k}s'(t)s(t)-u^*,\quad t\in[0,T],
    \end{cases}
\end{equation}
where $\widetilde{\psi}(\xi)=\psi(s_0\xi)$ and $\widetilde{h}(\xi,t)=h(\xi s(t),t)$.

Let us consider the spectral problem with finding the eigenfunction $\phi$ and eigenvalue $\lambda$:
\begin{equation}\label{eq7e}
    \begin{cases}
        -\phi''(\xi)=\lambda\phi(\xi)\\
        \phi(0)=\phi(1)=0,
    \end{cases}
\end{equation}
which gives us
\begin{equation}\label{eq7f}
    \phi_n(\xi)=\sqrt{2}\sin(\sqrt{\lambda_n}\xi),\quad \lambda_n=n^2\pi^2,\quad n\in\mathbb{N}.
\end{equation}

The following lemma is obtained with the help of integration by parts and the Cauchy-Schwarz and classical Bessel inequalities.
\begin{lemma}\label{lem1}
    If $\widetilde{\psi}\in C^4(\Omega_0)$ which satisfy the conditions $\widetilde{\psi}(0)=\widetilde{\psi}(1)=\widetilde{\psi}'(0)=\widetilde{\psi}'(1)=\widetilde{\psi}''(0)=\widetilde{\psi}''(1)=0$, then
    \begin{equation}\label{eq10}
        \sum\limits_{n=1}^{\infty}\lambda_n|\widetilde{\psi}_n|\leq c ||\widetilde{\psi}||_{C^4[0,1]},
    \end{equation}
    for some positive constant $c=\sqrt{2}\left[\sum_{n=1}^{\infty}\frac{1}{\lambda_n}\right]^{1/2}$, where $\widetilde{\psi}_n=\int_0^{1}\widetilde{\psi}(\xi)\phi_n(\xi)d\xi$.
\end{lemma}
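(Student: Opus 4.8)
The plan is to exploit the spectral identity $-\phi_n''=\lambda_n\phi_n$ coming from \eqref{eq7e}, which lets me trade each factor of $\lambda_n$ for two derivatives shifted onto $\widetilde{\psi}$ via integration by parts. Writing $\widetilde{\psi}_n^{(k)}=\int_0^1\widetilde{\psi}^{(k)}(\xi)\phi_n(\xi)\,d\xi$ for the coefficients of the successive derivatives (so $\widetilde{\psi}_n^{(0)}=\widetilde{\psi}_n$), the first goal is to prove the recursion $\lambda_n\widetilde{\psi}_n=-\widetilde{\psi}_n''$ and, one level deeper, $\lambda_n^2\widetilde{\psi}_n=\widetilde{\psi}_n^{(4)}$, i.e. $\widetilde{\psi}_n=\lambda_n^{-2}\,\widetilde{\psi}_n^{(4)}$. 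Equivalently, one may integrate $\int_0^1\widetilde{\psi}\,\phi_n\,d\xi$ by parts four times directly against $\phi_n=\sqrt{2}\sin(\sqrt{\lambda_n}\xi)$ from \eqref{eq7f}, each pair of integrations producing a factor $\lambda_n^{-1}$.

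The heart of the computation is the control of the boundary terms generated at each step. Differentiating the trigonometric factor once yields $\phi_n'\propto\cos(\sqrt{\lambda_n}\xi)$ and twice returns $-\lambda_n\phi_n$, which vanishes at $\xi=0,1$. The endpoint contributions that carry $\widetilde{\psi}$ and $\widetilde{\psi}''$ are precisely those annihilated by the hypotheses $\widetilde{\psi}(0)=\widetilde{\psi}(1)=\widetilde{\psi}''(0)=\widetilde{\psi}''(1)=0$, while the contributions attached to $\widetilde{\psi}'$ and $\widetilde{\psi}'''$ multiply $\sin(\sqrt{\lambda_n}\xi)$ and vanish automatically at the endpoints (so the conditions on $\widetilde{\psi}'$ are available, and harmless, even though this particular chain does not strictly require them). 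The assumption $\widetilde{\psi}\in C^4[0,1]$ guarantees that all four integrations by parts are legitimate and that $\widetilde{\psi}^{(4)}$ is continuous, hence square-integrable on the unit interval.

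Once $\widetilde{\psi}_n=\lambda_n^{-2}\widetilde{\psi}_n^{(4)}$ is in hand, I would sum to obtain $\sum_{n}\lambda_n|\widetilde{\psi}_n|=\sum_{n}\lambda_n^{-1}|\widetilde{\psi}_n^{(4)}|$ and then split the weight as $\lambda_n^{-1}=\lambda_n^{-1/2}\cdot\lambda_n^{-1/2}$, applying the Cauchy--Schwarz inequality to isolate the convergent factor $\bigl(\sum_n\lambda_n^{-1}\bigr)^{1/2}$ (finite since $\lambda_n=n^2\pi^2$). The residual sum $\sum_n\lambda_n^{-1}|\widetilde{\psi}_n^{(4)}|^2\le\sum_n|\widetilde{\psi}_n^{(4)}|^2$ is then dominated, through the classical Bessel inequality for the orthonormal system $\{\phi_n\}$, by $\|\widetilde{\psi}^{(4)}\|_{L^2[0,1]}^2\le\|\widetilde{\psi}\|_{C^4[0,1]}^2$, and the $\sqrt{2}$ in the constant $c=\sqrt{2}\bigl[\sum_n\lambda_n^{-1}\bigr]^{1/2}$ enters through the normalization $\|\phi_n\|_\infty=\sqrt{2}$ used when estimating the coefficients. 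The step I expect to be the main obstacle is the disciplined bookkeeping of the boundary terms across the four integrations by parts — matching every endpoint contribution to one of the prescribed vanishing conditions — together with choosing the Cauchy--Schwarz splitting so that the summable sequence $\lambda_n^{-1}$ is extracted while the remaining coefficient sum is exactly of Bessel type; everything else is routine estimation.
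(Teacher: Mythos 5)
Your proposal is correct and follows essentially the same route as the paper: integration by parts to shift derivatives onto $\widetilde{\psi}$ (with the stated endpoint conditions killing the boundary terms and the sine factors handling the rest automatically), then Cauchy--Schwarz to extract the convergent factor $\bigl(\sum_n\lambda_n^{-1}\bigr)^{1/2}$, then Bessel's inequality to dominate the remaining coefficient sum by $\|\widetilde{\psi}\|_{C^4[0,1]}$. The only difference is cosmetic: the paper stops after three integrations by parts, estimating $\widetilde{\psi}'''$ against the cosine system, whereas you perform a fourth and estimate $\widetilde{\psi}^{(4)}$ against the sines $\phi_n$ themselves; both arguments are valid and yield the claimed inequality (your constant is in fact slightly smaller than the stated $c$, which is harmless).
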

\begin{proof}
    By the definition of $\widetilde{\psi}_n$ and integrating by parts three times, we have: \begin{equation*}
            \begin{split}
                \widetilde{\psi}_n&=\int\limits_0^{1}\widetilde{\psi}(\xi)\phi_n(\xi)d\xi=\sqrt{2}\int\limits_0^{1}\widetilde{\psi}_n(\xi)\sin(\sqrt{\lambda_n}\xi)d\xi=...=\sqrt{2}\dfrac{1}{\lambda_n\sqrt{\lambda_n}}\int\limits_0^{1}g(\xi)\cos(\sqrt{\lambda_n}\xi)d\xi,
            \end{split}
    \end{equation*}
    where $g(\xi)=-\widetilde{\psi}_n'''(\xi)$. By using the Cauchy-Schwarz and classical Bessel inequalities, we obtain
    \begin{equation}\label{f13}
        \begin{split}
            \sum\limits_{n=1}^{\infty}\lambda_n|\widetilde{\psi}_n|&\leq\sqrt{2}\left[\sum\limits_{n=1}^{\infty}\dfrac{1}{\lambda_n}\right]^{1/2}\left[\int\limits_0^{1}g(\xi)\cos(\sqrt{\lambda_n}\xi)d\xi\right]^{1/2}\leq c||\widetilde{\psi}'''||_{L_2[0,1]}\leq c||\widetilde{\psi}||_{C^4[0,1]},
        \end{split}
    \end{equation}
    where $c=\sqrt{2}\left[\sum_{n=1}^{\infty}\frac{1}{\lambda_n}\right]^{1/2}$.
\end{proof}

Let $\{\phi_n(\xi)\}_{n=1}^{\infty}$ form a basis in $L_2(\Omega_0)$, then we seek a weak solution of the problem \eqref{eq7d} in the form of
\begin{equation}\label{eq11}
    U(\xi,t)=\sum\limits_{n=1}^{\infty}U_n(t)\phi_n(\xi),
\end{equation}
where $U_n(t)=(U,\phi_n)_{L_2(0,1)}$ for $n\in\mathbb{N}$. 

Then variational formulation of the problem \eqref{eq7d} with $\phi_m(\xi)$ which is solution of \eqref{eq7e}, where $m\in\mathbb{N}$, is given by
\begin{equation}\label{eq7g}
    \begin{cases}
        \left(\partial_t U,\phi_m\right)-\left(b(\xi,t)\partial_{\xi}U,\phi_m\right)+a(t)\lambda_n\left(\partial_{\xi\xi}U,\phi_m\right)=R(t)\left(\widetilde{h}(\xi,t),\phi_m\right),& \forall\phi_m\in\mathbb{H}_0^1(\Omega_0),\\
        U(\xi,0)=\left(\widetilde{\psi},\phi_m\right),& \xi\in[0,1],\\
        U(0,t)=0,\quad U(1,t)=0,&\forall t\in[0,T],
    \end{cases}
\end{equation}
where $a(t)=\frac{a^2}{s^2(t)}$ and $b(\xi,t)=\frac{\xi s'(t)}{s(t)}$. 

We need to consider two cases: 
\begin{itemize}
    \item[1)] if $m\neq n$, taking into account \eqref{eq11} and restricting the equation \eqref{eq7g} we can obtain
        \begin{equation}\label{eq7h}
            \begin{cases}
                AU'_n(t)+\left(a(t)\lambda_n B-C(t)\right)U_n(t)=R(t)D(t),& \forall t\in[0,T],\\
                U_n(0)=\widetilde{\psi}_m,
            \end{cases}
        \end{equation}
    where $\widetilde{\psi}_m=\int_0^1\widetilde{\psi}(\xi)\phi_m(\xi)d\xi$, $A=B=0$ and $$C(t)=\int_0^1b(\xi,t)\phi'_n(\xi)\phi_m(\xi)d\xi,\quad D(t)=\int_0^1\widetilde{h}(\xi,t)\phi_m(x)d\xi$$ 
    where $C(t)$ is a matrix of size $m\times n$ and $D(t)$ is a column vector with $m$ entries.
    
    Since $U_n(t)$ depends on the initial condition $U_n(0)=\widetilde{\psi}_m$, then $U_n(t)=\widetilde{\psi}_m$, if no evolution equation is provided for $U_n$, then we obtain
    \begin{equation}\label{eq7i}
        R(t)=-\widetilde{\psi}_m\dfrac{C_{ij}(t)}{D_{i}(t)}, \quad i=1,2,3,...,m,\quad j=1,2,3,...,n.
    \end{equation}
    It follows that solution of $R(t)$ is not unique in this case.
    \item[2)] in particular if we take $m=n$ and  introducing equalities:
\begin{equation}\label{f14}
    \begin{cases}
        (U_t,\phi_n)_{L_2(0,1)}=\dfrac{d}{dt}(U,\phi_n)_{L_2(0,1)}=U_n'(t),\\
        \left(b(\xi,t)U_{\xi}-a(t)U_{\xi\xi},\phi_n\right)_{L_2(0,1)}=\left(a(t)\lambda_n-b(t)\right)U_n(t),\\
        (\widetilde{h}(\xi,t),\phi_n)_{L_2(0,1)}=\widetilde{h}_n(t),\\
        (\widetilde{\psi},\phi_n)_{L_2(0,1)}=\widetilde{\psi}_n
    \end{cases}
\end{equation}
where $a(t)=\frac{a^2}{s^2(t)}$ and $b(t)=(b(\xi,t)\phi'_n(\xi),\phi_n(\xi))_{L_2(0,1)}$, then \eqref{eq7g} can be reduced to the problem with condition
\begin{equation}\label{eq12}
    \begin{cases}
        U_n'(t)+\left(a(t)\lambda_n-b(t)\right)U_n(t)=R(t)\widetilde{h}_n(t),& n\in\mathbb{N},\quad t\in[0,T],\\
        U_n(0)=\widetilde{\psi}_n,& n\in\mathbb{N}.
    \end{cases}
\end{equation} 
\end{itemize}
One can easily find the solution to the problem \eqref{eq12} which takes the form
\begin{equation}\label{eq13}
    \begin{split}
        U_n(t)=\widetilde{\psi}_n e^{-\int_0^t\left(a(\tau)\lambda_n-b(\tau)\right)d\tau}+\int\limits_0^t R(\tau)\widetilde{h}_n(\tau)e^{-\int_{\tau}^t\left(a(z)\lambda_n-b(z)\right)dz}d\tau,
    \end{split}
    \end{equation}
where $n\in\mathbb{N}$. Then from \eqref{eq11},\eqref{eq13} we can represent the solution of $U(\xi,t)$ of the problem \eqref{eq7} as following form
\begin{equation}\label{eq14}
        \begin{split}
             U(\xi,t)=\sum\limits_{n=1}^{\infty}\Bigg(\widetilde{\psi}_n e^{-\int_0^t(a(\tau)\lambda_n-b(\tau))d\tau}+\int\limits_0^t R(\tau)\widetilde{h}_n(\tau)e^{-\int_{\tau}^t(a(z)\lambda_n-b(z))dz}d\tau\Bigg)\phi_n(\xi),
        \end{split}
\end{equation}
where $n\in\mathbb{N}$.
Taking derivative respect to $\xi$ from \eqref{eq14} we have
\begin{equation*}
    \begin{split}
        U_{\xi}(\xi,t)=\sum\limits_{n=1}^{\infty}\Bigg(\widetilde{\psi}_n e^{-\int_0^t(a(\tau)\lambda_n-b(\tau))d\tau}+\int\limits_0^t R(\tau)\widetilde{h}_n(\tau)e^{-\int_{\tau}^t(a(z)\lambda_n-b(z))dz}d\tau\Bigg)\phi_n'(\xi),
    \end{split}
\end{equation*}
then condition \eqref{eq5} can be rewritten in the form of
\begin{equation}\label{eq15}
        \begin{split} 
            &\sum\limits_{n=1}^{\infty}\Bigg((-1)^n\sqrt{\lambda_n}\widetilde{\psi}_n e^{-\int_0^t(a(\tau)\lambda_n-b(\tau))d\tau}\\&+(-1)^n\sqrt{\lambda_n}\int\limits_0^t R(\tau)\widetilde{h}_n(\tau)e^{-\int_{\tau}^t(a(\tau)\lambda_n-b(\tau))dz}d\tau\Bigg)=-\dfrac{L}{\sqrt{2}k}c(t)-\dfrac{u^*}{\sqrt{2}},
        \end{split}
\end{equation}
where $c(t)=s'(t)s(t)$.

Differentiating both sides of \eqref{eq15} respect to $t$ we obtain
\begin{equation}\label{eq16}
        \begin{split}
            &\sum\limits_{n=1}^{\infty}\Bigg(-(-1)^n\sqrt{\lambda_n}(a(t)\lambda_n-b(t))\widetilde{\psi}_n e^{-\int_0^t(a(\tau)\lambda_n-b(\tau))d\tau}\\
            &-(-1)^n\sqrt{\lambda_n}(a(t)\lambda_n-b(t))\int\limits_0^t R(\tau)\widetilde{h}_n(\tau)e^{-\int_{\tau}^t(a(z)\lambda_n-b(z))dz}d\tau+(-1)^n\sqrt{\lambda_n}R(t)\widetilde{h}_n(t)\Bigg)=-\dfrac{L}{\sqrt{2}k}c'(t).
        \end{split}
\end{equation}
From expression \eqref{eq16} we get
\begin{equation}\label{eq17}
    \begin{split}
        R(t)&=\dfrac{1}{\sum\limits_{n=1}^{\infty}(-1)^n\sqrt{\lambda_n}\widetilde{h}_n(t)}\Bigg[a(t)\sum\limits_{n=1}^{\infty}(-1)^n\lambda_n\sqrt{\lambda_n}\widetilde{\psi}_ne^{\int_0^{t}(a(\tau)\lambda_n-b(\tau))d\tau}\\
        &+a(t)\int\limits_0^t\left(\sum\limits_{n=1}^{\infty}(-1)^n\lambda_n\sqrt{\lambda_n}\widetilde{h}_n(\tau)e^{-\int_{\tau}^{t}(a(z)\lambda_n-b(z))d\tau}\right)R(\tau)d\tau\\&-b(t)\sum\limits_{n=1}^{\infty}(-1)^n\sqrt{\lambda_n}\widetilde{\psi}_ne^{\int_0^{t}(a(\tau)\lambda_n-b(\tau))d\tau}\\&-b(t)\int\limits_0^t\left(\sum\limits_{n=1}^{\infty}(-1)^n\sqrt{\lambda_n}\widetilde{h}_n(\tau)e^{-\int_{\tau}^{t}(a(z)\lambda_n-b(z))d\tau}\right)R(\tau)d\tau-\dfrac{Lc'(t)}{\sqrt{2}k}\Bigg].
    \end{split}
\end{equation}
Now let us provide the following lemma to elucidate that the function \eqref{eq14} is a weak solution of the problem \eqref{eq7}.
\begin{lemma}\label{lem2}
    Let $\widetilde{\psi}\in L_2(\Omega_0)$, $\widetilde{h}\in C([0,T]; L_2(\Omega_0)\cap C([0,T]; \mathbb{H}_0^1(\Omega_0)\cap \mathbb{H}_0^2(\Omega_0)))$ where $a(t),b(t)>0$ for all $t\in[0,T]$. Then there exists a weak solution of the problem \eqref{eq7} given by function \eqref{eq14}.
\end{lemma}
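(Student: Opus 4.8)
The plan is to verify directly that the series \eqref{eq14}, whose coefficients solve the decoupled Cauchy problems \eqref{eq12}, satisfies the variational identity \eqref{eq7g} for every test function in $\mathbb{H}_0^1(\Omega_0)$, together with the prescribed initial and boundary data. I would organize the argument around the partial sums $U^N(\xi,t)=\sum_{n=1}^N U_n(t)\phi_n(\xi)$. By construction each $U_n$ solves \eqref{eq12}, so testing \eqref{eq7g} against $\phi_m$ with $m\le N$ and using the reduction \eqref{f14} reproduces exactly the ODE satisfied by $U_m$; hence every partial sum solves the projected variational problem, and it remains to pass to the limit $N\to\infty$. Because $\{\phi_n\}$ is an orthonormal basis of $L_2(\Omega_0)$ and finite linear combinations are dense in $\mathbb{H}_0^1(\Omega_0)$, once convergence of $U^N$ in the appropriate norm is secured the variational identity extends from the span to all of $\mathbb{H}_0^1(\Omega_0)$.

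The first genuine step is an energy estimate. I would exploit the explicit representation \eqref{eq13}: since $a(t)>0$ the factor $e^{-\lambda_n\int_0^t a}$ provides exponential decay in $\lambda_n$, while $b$ being continuous on $[0,T]$ makes $e^{\int_0^t b}$ bounded uniformly in $n$. Combining this with $\widetilde{\psi}\in L_2(\Omega_0)$ (so $\sum_n|\widetilde{\psi}_n|^2<\infty$) and $\widetilde{h}\in C([0,T];L_2(\Omega_0))$ (so $\sum_n|\widetilde{h}_n(t)|^2$ is bounded), a term-by-term Cauchy--Schwarz estimate yields $\sum_n|U_n(t)|^2<\infty$ uniformly in $t$, giving $U\in C([0,T];L_2(\Omega_0))$. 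Equivalently, multiplying \eqref{eq12} by $U_n$, summing, and applying Gronwall produces the standard bound $\|U\|_{L^\infty(0,T;L_2)}+\|U\|_{L_2(0,T;\mathbb{H}_0^1)}\le C$, which furnishes the $\mathbb{H}_0^1$ regularity needed to make $(\partial_\xi U,\phi_m)$ and $(b(\xi,t)\partial_\xi U,\phi_m)$ meaningful; here the stronger hypothesis $\widetilde{h}\in C([0,T];\mathbb{H}_0^1\cap\mathbb{H}_0^2)$ is what upgrades the $L_2$ bound to control of the differentiated series.

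The delicate point is the simultaneous determination of $R(t)$, which is not free but must be compatible with the Stefan condition \eqref{eq5}. Differentiating the boundary relation \eqref{eq15} leads to the linear Volterra integral equation of the second kind \eqref{eq17} for $R$. To solve it I would first check that its kernel and forcing term are continuous on $[0,T]$: this requires the series carrying the weights $\sqrt{\lambda_n}$ and $\lambda_n\sqrt{\lambda_n}$ in \eqref{eq16}--\eqref{eq17} to converge, which is exactly where Lemma \ref{lem1} (and its analogue for $\widetilde{h}_n$, supplied by $\widetilde{h}\in C([0,T];\mathbb{H}_0^1\cap\mathbb{H}_0^2)$) is used to trade the factors $\lambda_n^{3/2}$ against the fast decay of the coefficients. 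I expect this to be the main obstacle, since the differentiated series is only conditionally summable and one must combine the eigenfunction decay estimates with the exponential factor $e^{-\lambda_n\int_\tau^t a}$ to obtain absolute convergence uniform in $t$. I would also invoke the nondegeneracy assumption that the denominator $\sum_n(-1)^n\sqrt{\lambda_n}\widetilde{h}_n(t)$ stays bounded away from zero on $[0,T]$, so that \eqref{eq17} is a genuine second-kind equation; under this condition a contraction-mapping (successive-approximation) argument on $C[0,T]$ yields a unique continuous $R$.

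Finally I would close the loop. With $R\in C[0,T]$ fixed, the coefficients $U_n(t)$ from \eqref{eq13} are well defined and the convergence established above shows $U$ given by \eqref{eq14} lies in $C([0,T];L_2(\Omega_0))\cap L_2(0,T;\mathbb{H}_0^1(\Omega_0))$ and satisfies \eqref{eq7g}. The homogeneous boundary conditions $U(0,t)=U(1,t)=0$ hold because every $\phi_n$ vanishes at the endpoints and the series converges in $\mathbb{H}_0^1(\Omega_0)$, while the initial condition $U(\cdot,0)=\widetilde{\psi}$ in $L_2(\Omega_0)$ follows from $U_n(0)=\widetilde{\psi}_n$ and completeness of $\{\phi_n\}$. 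Undoing the changes of variables \eqref{eq7c} and \eqref{eq7a} transports this weak solution back to the pair $(R,u)$ for the original problem \eqref{eq7}, completing the proof.
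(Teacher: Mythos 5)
Your proposal is correct in outline, but it follows a genuinely different and more self-contained route than the paper. The paper's proof of Lemma~\ref{lem2} is purely a chain of a priori norm estimates on the explicit series \eqref{eq14}: treating $R$ as an already given bounded continuous function, it uses Bessel/Parseval, Cauchy--Schwarz, the elementary inequalities $(x+y)^2\le 2(x^2+y^2)$, and the decay factor $e^{-\lambda_n\int a}$ to show $U\in C([0,T];L_2(\Omega_0))$, $U\in C([0,T];\mathbb{H}_0^2(\Omega_0))$, and convergence of the differentiated-in-$t$ series; it never verifies the variational identity \eqref{eq7g} explicitly, never constructs $R$, and does not undo the changes of variables back to \eqref{eq7}. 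You, by contrast, (a) verify \eqref{eq7g} through partial sums $U^N$ and a density argument, (b) build $R$ inside the lemma by solving the Volterra equation \eqref{eq17} via contraction mapping, invoking Lemma~\ref{lem1}-type decay and a nondegeneracy assumption on the denominator $\sum_n(-1)^n\sqrt{\lambda_n}\widetilde{h}_n(t)$, and (c) close the loop on initial/boundary data and the inverse transformations. Point (b) is exactly what the paper postpones to Lemma~\ref{lem3}, Lemma~\ref{lem4}, Theorem~\ref{thm1} and the Gronwall argument of Theorem~\ref{thm3}; your version resolves within the lemma the circularity that \eqref{eq14} depends on an $R$ which is nowhere among the lemma's hypotheses, which is a genuine logical gap in the paper's modular presentation (the paper's estimates silently use bounds like $R^2(\tau)\le K_2$). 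What the paper's route buys is brevity and modularity -- Lemma~\ref{lem2} becomes a conditional regularity statement, with solvability for $R$ isolated elsewhere; what your route buys is a self-contained existence proof for the pair $(R,U)$, effectively absorbing part of Theorem~\ref{thm1}. One caution on your contraction step: the kernel of \eqref{eq17} carries weights $\lambda_n^{3/2}$ and is only integrably singular as $\tau\to t$ (the exponential factor degenerates to $1$ there), so boundedness of the Volterra operator on $C[0,T]$ requires pairing the time integration (which gains a factor $\lambda_n^{-1}$) with $\sum_n\sqrt{\lambda_n}\,\sup_\tau|\widetilde{h}_n(\tau)|<\infty$ from Lemma~\ref{lem3}; this is precisely the content of the second inequality in Lemma~\ref{lem4}, and your sketch should cite it at that point rather than only for continuity of the forcing term.
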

\begin{proof}
    We need to show that function \eqref{eq14} belongs to the class of functions $C([0,T];L_2(\Omega_0))$. Applying Bessel's inequality to \eqref{eq14} we get
    \begin{equation}\label{eq18}
        ||U(\xi,t)||^2_{L_2(\Omega_0)}=\sum\limits_{n=1}^{\infty}\Bigg|\Bigg(\widetilde{\psi}_n e^{-\int_0^t(a(\tau)\lambda_n-b(\tau))d\tau}+\int\limits_0^t R(\tau)\widetilde{h}_n(\tau)e^{-\int_{\tau}^{t}(a(z)\lambda_n-b(z))d\tau}d\tau\Bigg)\phi_n(\xi)\Bigg|^2
    \end{equation}
    after using Cauchy-Schwarz inequality we have
    \begin{equation*}
        \begin{split}
            ||U(\xi,t)||_{L_2(\Omega_0)}^2&\leq\Bigg|\sum\limits_{n=1}^{\infty}\Bigg(\Bigg(\widetilde{\psi}_n e^{-\int_0^t(a(\tau)\lambda_n-b(\tau))d\tau}\\
            &+\int\limits_0^t R(\tau)\widetilde{h}_n(\tau)e^{-\int_{\tau}^{t}(a(z)\lambda_n-b(z))d\tau}d\tau\Bigg)^2\Bigg)^{1/2}\cdot\sum\limits_{n=1}^{\infty}\left(\phi^2_n(\xi)\right)^{1/2}\Bigg|^2\\
            &\leq\sum\limits_{n=1}^{\infty}\Bigg(\widetilde{\psi}_n e^{-\int_0^t(a(\tau)\lambda_n-b(\tau))d\tau}+\int\limits_0^t R(\tau)\widetilde{h}_n(\tau)e^{-\int_{\tau}^{t}(a(z)\lambda_n-b(z))d\tau}d\tau\Bigg)^2\cdot\sum\limits_{n=1}^{\infty}\phi^2_n(\xi),
        \end{split}
    \end{equation*}
    as $\left\{\phi_n(\xi)\right\}_{n=1}^{\infty}$ forms an orthonormal basis in $L_2(\Omega_0)$ we can rewrite previous inequality as
    \begin{equation*}
        ||U(\xi,t)||_{L_2(\Omega_0)}^2\leq\sum\limits_{n=1}^{\infty}\Bigg(\widetilde{\psi}_n e^{-\int_0^t(a(\tau)\lambda_n-b(\tau))d\tau}+\int\limits_0^t R(\tau)\widetilde{h}_n(\tau)e^{-\int_{\tau}^{t}(a(z)\lambda_n-b(z))d\tau}d\tau\Bigg)^2.
    \end{equation*}
    Then using inequality $(x+y)^2\leq 2(x^2+y^2)$, we can rewrite \eqref{eq18} as
    \begin{equation}\label{eq19}
        \begin{split}
            ||U(\xi,t)||^2_{L_2(\Omega_0)}&\leq 2\sum\limits_{n=1}^{\infty}\widetilde{\psi}^2_n e^{-2\int_0^t(a(\tau)\lambda_n-b(\tau))d\tau}+2\int\limits_0^t\left(\sum\limits_{n=1}^{\infty}\left(\widetilde{h}_n(\tau)e^{-\int_{\tau}^{t}(a(z)\lambda_n-b(z))d\tau}\right)R(\tau)\right)^2d\tau
        \end{split}
        \end{equation}
    Hence, $a(t),b(t)\in C^1[0,T]$ for all $t\in[t_0,T]$, where $t_0\in(0,T)$, then applying Parseval's identity and Cauchy-Schwarz inequality, we can show the convergence of \eqref{eq19} such that
    \begin{equation*}
        \begin{split}
            2\sum\limits_{n=1}^{\infty}\widetilde{\psi}^2_n e^{-2\int_0^t(a(\tau)\lambda_n-b(\tau))d\tau}&\leq 2K\sum\limits_{n=1}^{\infty}\widetilde{\psi}^2_n e^{-2\lambda_n\int_0^ta(\tau)d\tau}\leq 2K\sum\limits_{n=1}^{\infty}e^{-2n^2\pi^2I(t_0)}\cdot\sum\limits_{n=1}^{\infty}\widetilde{\psi}_n^2\\
            &\leq 2C_1\sum\limits_{n=1}^{\infty}\psi_n^2=2C_1||\widetilde{\psi}||^2_{L_2(\Omega_0)},\quad K=\|e^{\int_0^tb(\tau)d\tau}\|_{C[0,T]},
        \end{split}
    \end{equation*}
    where $I(t)=\int_0^ta(\tau)d\tau$, for the second term we have
    \begin{equation*}
        \begin{split}
            &2\int\limits_0^t\left(\sum\limits_{n=1}^{\infty}\left(\widetilde{h}_n(\tau)e^{-\int_{\tau}^{t}(a(z)\lambda_n-b(z))d\tau}\right)R(\tau)\right)^2d\tau\leq 2K\sum\limits_{n=1}^{\infty}\int\limits_0^tR^2(\tau)e^{-2\lambda_n\int_{\tau}^t a(z)dz}d\tau\cdot\sum\limits_{n=1}^{\infty}\int\limits_0^t \widetilde{h}_n^2(\tau)d\tau\\
            &\leq 2K_2\sum\limits_{n=1}^{\infty}\int\limits_0^te^{-2\lambda_n\int_{\tau}^t a(z)dz}d\tau\cdot\sum\limits_{n=1}^{\infty}\int\limits_0^t \widetilde{h}_n^2(\tau)d\tau\leq 2C_2\sum\limits_{n=1}^{\infty}\int\limits_0^t \widetilde{h}_n^2(\tau)d\tau\leq 2C_2||\widetilde{h}||^2_{C([0,T];L_2(\Omega_0))}.
        \end{split}
    \end{equation*}
    
    Summarizing these results we can obtain
    \begin{equation}\label{eq20}
        \max_{t\in[0,T]}||U(\xi,t)||^2_{L_2(\Omega_0)}\leq 2C_1||\widetilde{\psi}||^2_{L_2(\Omega_0)}+2C_2||\widetilde{h}||^2_{C([0,T];L_2(\Omega_0)},
    \end{equation}
    which help us to conclude that the function $U(\xi,t)\in C([0,T];L_2(\Omega_0))$.

    Now we need to show that function $U(\xi,t)$ defined by \eqref{eq14} belongs to the class of functions $C([0,T];\mathbb{H}_0^2(\Omega_0))$.
    It follows that series \eqref{eq14} uniformly converges in $\mathbb{H}_0^2(\Omega_0)$ for all $t\in[t_0,T]$, where $t_0\in(0,T)$. Now we can rewrite series \eqref{eq14} as
    \begin{equation}\label{eq21}
        \begin{split}
             U(\xi,t)=\sum\limits_{n=1}^{\infty}\sqrt{\lambda_n}\Bigg(\widetilde{\psi}_n e^{-\int_0^t(a(\tau)\lambda_n-b(\tau))d\tau}+\int\limits_0^t R(\tau)\widetilde{h}_n(\tau)e^{-\int_{\tau}^t(a(z)\lambda_n-b(z))dz}d\tau\Bigg)\dfrac{\phi_n(\xi)}{\sqrt{\lambda_n}},
         \end{split}
    \end{equation}
    then we get
    \begin{equation}\label{eq22}
        ||U(\xi,t)||^2_{\mathbb{H}_0^2(\Omega_0)}=\left|\sum\limits_{n=1}^{\infty}\sqrt{\lambda_n}\Bigg(\widetilde{\psi}_n e^{-\int_0^t(a(\tau)\lambda_n-b(\tau))d\tau}+\int\limits_0^t R(\tau)\widetilde{h}_n(\tau)e^{-\int_{\tau}^t(a(z)\lambda_n-b(z))dz}d\tau\Bigg)\dfrac{\phi_n(\xi)}{\sqrt{\lambda_n}}\right|^2,
    \end{equation}
    applying Cauchy-Schwarz inequality and we suppose $\left\{\frac{\phi_n(\xi)}{\sqrt{\lambda_n}}\right\}_{n=1}^{\infty}$ is an orthonormal basis in $\mathbb{H}_0^2(\Omega_0)$, then we obtain
    \begin{equation*}
        ||U(\xi,t)||^2_{\mathbb{H}_0^2(0,1)}=\sum\limits_{n=1}^{\infty}\lambda_n\Bigg(\widetilde{\psi}_n e^{-\int_0^t(a(\tau)\lambda_n-b(\tau))d\tau}+\int\limits_0^t R(\tau)\widetilde{h}_n(\tau)e^{-\int_{\tau}^t(a(z)\lambda_n-b(z))dz}d\tau\Bigg)^2,
    \end{equation*}
    and applying inequality $(x+y)^2\leq 2(x^2+y^2)$ we have
    \begin{equation}\label{eq23}
            \begin{split}
                ||U(\xi,t)||^2_{\mathbb{H}_0^2(\Omega_0)}&\leq 2\sum\limits_{n=1}^{\infty}\left(\sqrt{\lambda_n}\widetilde{\psi}_n e^{-\int_0^t(a(\tau)\lambda_n-b(\tau))d\tau}\right)^2\\&+2\sum\limits_{n=1}^{\infty}\int\limits_0^t\left(\sqrt{\lambda_n}R(\tau)\widetilde{h}_n(\tau)e^{-\int_{\tau}^t(a(z)\lambda_n-b(z))dz}\right)^2d\tau
            \end{split}
    \end{equation}
    Since $a(t),b(t)\in C^1[0,T]$, the following estimates hold for any $t\in[t_0,T]$ such that
    \begin{equation*}
        \begin{split}
            \sqrt{\lambda_n}e^{-\int_0^t(a(\tau)\lambda_n-b(\tau))d\tau}\leq K\sqrt{\lambda_n}e^{-\lambda_n\int_0^ta(\tau)d\tau}&=\dfrac{K}{\sqrt{\lambda_n}\int_0^ta(\tau)d\tau}\lambda_n\int_0^ta(\tau)d\tau\cdot e^{-\lambda_n\int_0^ta(\tau)d\tau}\\
            &\leq\dfrac{K}{I(t_0)\sqrt{\lambda_1}e},
        \end{split}
    \end{equation*}
    where $K=\|e^{\int_0^tb(\tau)d\tau}\|_{C[0,T]}$ and $I(t_0)=\int_0^{t_0}a(\tau)d\tau$. 
    
    Now we need to prove convergence of the right-hand side series of \eqref{eq23}. By using Cauchy-Schwarz inequality, the following assumptions hold for all $t\in[t_0,T]$
    \begin{equation*}
        \begin{split}
            2\sum\limits_{n=1}^{\infty}\left(\sqrt{\lambda_n}\widetilde{\psi}_n e^{-\int_0^t(a(\tau)\lambda_n-b(\tau))d\tau}\right)^2\leq 2C_3\sum\limits_{n=1}^{\infty}\widetilde{\psi}_n^2\leq 2C_3||\widetilde{\psi}||^2_{L_2(\Omega_0)},\quad C_3=\dfrac{K^2}{I^2(t_0)\lambda_1e^2}
        \end{split}
    \end{equation*}
    For the second term, we obtain
    \begin{equation*}
        \begin{split}
            &2\sum\limits_{n=1}^{\infty}\int\limits_0^t\left(\sqrt{\lambda_n}R(\tau)\widetilde{h}_n(\tau)e^{-\int_{\tau}^t(a(z)\lambda_n-b(z))dz}\right)^2d\tau \\&\leq 2\sum\limits_{n=1}^{\infty}\int\limits_0^t\left(\sqrt{\lambda_n}e^{-2\int_{\tau}^t(a(z)\lambda_n-b(z))dz}\right)^2d\tau\cdot\sum\limits_{n=1}^{\infty}\int\limits_0^t\widetilde{h}_n^2(\tau)d\tau\\
            &\leq 2C_4\cdot\sum\limits_{n=1}^{\infty}\int\limits_0^t\widetilde{h}_n^2(\tau)d\tau\leq 2C_4||\widetilde{h}||^2_{C([0,T];L_2(\Omega_0))}, \quad C_4=\dfrac{K^2t_0}{I^2(t_0)e^2\lambda_1}.
        \end{split}
    \end{equation*}
    Totally we have the following result that
    \begin{equation}\label{eq26}
        \max_{t\in[0,T]}||U(\xi,t)||^2_{\mathbb{H}_0^2(\Omega_0)}\leq 2C_2||\widetilde{\psi}||^2_{L_2(\Omega_0)}+2C_4||\widetilde{h}||^2_{C([0,T];L_2(\Omega_0))}.
    \end{equation}
    This follows that the function $U(\xi,t)$ belongs to $C([0,T]; \mathbb{H}_0^2(\Omega_0))$.

    Now, we provide the uniform convergence of the series obtained differentiating series \eqref{eq14} respect to $t$ in $L_2(0,1)$ for any $t\in[t_0,T]$, where $t_0\in(0,T)$.
    Let us take derivative from the function \eqref{eq14} respect to $t$, then we get
    \begin{equation}\label{eq27}
        \begin{split}
            U_t(\xi,t)&=\sum\limits_{n=1}^{\infty}\Bigg(-(a(\tau)\lambda_n-b(\tau))\widetilde{\psi}_ne^{-\int_0^t(a(\tau)\lambda_n-b(\tau))d\tau}\\&-(a(t)\lambda_n-b(t))\int\limits_0^t R(\tau)\widetilde{h}_n(\tau)e^{-\int_{\tau}^t(a(z)\lambda_n-b(z))dz}d\tau+R(t)\widetilde{h}_n(t)\Bigg)\phi_n(\xi)
        \end{split}
    \end{equation}
    
    Hence, $\left\{\phi_n(\xi)\right\}_{n=1}^{\infty}$ is an orthonormal basis in $L_2(\Omega_0)$, taking $L^2$-norm of the both sides of \eqref{eq27}, then integrating by parts, we get
    \begin{equation*}
        \begin{split}
            ||U_{t}(\xi,t)||^2_{L^2(\Omega_0)}&=\sum\limits_{n=1}^{\infty}\Bigg(-(a(\tau)\lambda_n-b(\tau))\widetilde{\psi}_ne^{-\int_0^t(a(\tau)\lambda_n-b(\tau))d\tau}\\
            &-\int\limits_0^t e^{-\int_{\tau}^t(a(z)\lambda_n-b(z))dz}(R'(\tau)\widetilde{h}_n(\tau)+R(\tau)\widetilde{h}'_n(\tau))d\tau+2R(t)\widetilde{h}_n(t)\\
            &-R(0)\widetilde{h}_n(0)e^{-\int_{0}^t(a(\tau)\lambda_n-b(\tau))d\tau}\Bigg)^2,
        \end{split}
    \end{equation*}
    using inequality $(x+y+z)^2\leq 3(x^2+y^2+z^2)$ we have
    \begin{equation}\label{eq28}
        \begin{split}
            ||U_{t}(\xi,t)||^2_{L^2(\Omega_0)}&\leq 3\sum\limits_{n=1}^{\infty}(a(t)\lambda_n-b(t))^2\widetilde{\psi}_n^2e^{-2\int_0^t(a(\tau)\lambda_n-b(\tau))d\tau}\\
            &+3\sum\limits_{n=1}^{\infty}\left(\int\limits_0^t e^{-\int_{\tau}^t(a(z)\lambda_n-b(z))dz}(R'(\tau)\widetilde{h}_n(\tau)+R(\tau)\widetilde{h}'_n(\tau))d\tau\right)^2\\
            &+3\sum\limits_{n=1}^{\infty}\left(2R(t)\widetilde{h}_n(t)-R(0)\widetilde{h}_n(0)e^{-\int_{\tau}^t(a(z)\lambda_n-b(z))dz}\right)^2.
        \end{split}
    \end{equation}
    Then using inequality $(a+b)^2\leq 2a^2+2b^2$, we can provide the following results for convergence of the right-hand side \eqref{eq28}:
    \begin{equation*}
        \begin{split}
            &3\sum\limits_{n=1}^{\infty}(a(t)\lambda_n-b(t))^2\widetilde{\psi}_n^2e^{-2\int_0^t(a(\tau)\lambda_n-b(\tau))^2d\tau}\\
            &\leq 6K_1^2\sum\limits_{n=1}^{\infty}\left(\sqrt{\lambda_n}\widetilde{\psi}_ne^{-\int_0^t(a(\tau)\lambda_n-b(\tau))d\tau}\right)^2+6K_2^2\sum\limits_{n=1}^{\infty}\left(\widetilde{\psi}_ne^{-\int_0^t(a(\tau)\lambda_n-b(\tau))d\tau}\right)^2\\
            &\leq 6C_5\sum\limits_{n=1}^{\infty}\widetilde{\psi}_n^2\leq 6C_5||\widetilde{\psi}||^2_{L^2(\Omega_0)},
        \end{split}
    \end{equation*}
    where $C_5=\dfrac{K_1^2K^2}{I^2(t_0)e^2\lambda_1}+K_2,\;\; K_1^2=\|a(t)||^2_{C_[0,T]},\;\; K_2^2=||b(t)||^2_{C[0,T]}$ and $K=\|e^{\int_0^tb(\tau)d\tau}\|_{C[0,T]}$. Applying $(x+y)^2\leq 2(x^2+y^2)$ for the second term of \eqref{eq28} we have
    \begin{equation*}
        \begin{split}
            &6\sum\limits_{n=1}^{\infty}\int\limits_0^t\widetilde{h}^2_n(\tau)(R'(\tau))^2e^{-2\int_{\tau}^t(a(z)\lambda_n-b(z))dz}d\tau + 6\sum\limits_{n=1}^{\infty}\int\limits_0^t (\widetilde{h}'_n(\tau))^2R^2(\tau)e^{-2\int_{\tau}^t(a(z)\lambda_n-b(z))dz}d\tau\\
            &\leq 6K_3^2\sum\limits_{n=1}^{\infty}\int\limits_0^te^{-2\lambda_n\int_{\tau}^t a(z)dz}d\tau\int\limits_0^t\widetilde{h}_n^2(\tau)d\tau + 6K_4^2\sum\limits_{n=1}^{\infty}\int\limits_0^te^{-2\lambda+n\int_{\tau}^t a(z)dz}d\tau\int\limits_0^t(\widetilde{h}'_n(\tau))^2d\tau\\
            &\leq 6C_6||\widetilde{h}||^2_{C([0,T];L_2(\Omega_0))}, \quad C_6=K_3^2+K_4^2
        \end{split}
    \end{equation*}
    For last term of \eqref{eq28} we again use the inequality $(x+y)^2\leq 2(x^2+y^2)$, then we have
    \begin{equation*}
        \begin{split}
            &12\sum\limits_{n=1}^{\infty}R^2(t)\widetilde{h}_n^2(t)+6\sum\limits_{n=1}^{\infty}R^2(0)\widetilde{h}^2_n(0)e^{-2\int_{\tau}^t(a(z)\lambda_n-b(z))dz}\leq 12K_5^2\sum\limits_{n=1}^{\infty}\widetilde{h}_n^2(t)+6K_6^2\sum\limits_{n=1}^{\infty}\widetilde{h}^2_n(0)\\
            &\leq C_7||h||^2_{L_2(\Omega_0)},\quad C_7=12K_5^2+6K_6^2.
        \end{split}
    \end{equation*}
    Totally we get that 
    \begin{equation*}
        \begin{split}
            \max_{t\in[0,T]} ||U_{t}(\xi,t)||^2_{L_2(\Omega_0)}\leq 6C_5||\widetilde{\psi}||^2_{L_2(\Omega_0)}+6C_6||\widetilde{h}||^2_{C([0,T];L_2(\Omega_0)}+C_7||\widetilde{h}||^2_{L^2(\Omega_0)},
        \end{split}
    \end{equation*}

    Belonging the function $U(\xi,t)$ to the class $C^{1,0}([0,T]; L_2(\Omega_0)$ and $C^{2,0}([0,T];L_2(\Omega_0))$ can be proved analogously.
\end{proof}
To prove the existence of the weak solution for $R(t)$ we need to establish the following lemma.
\begin{lemma}\label{lem3}
    If $\widetilde{h}\in C([0,T];L_2(\Omega_0))\cap C^3([0,T];L_2(\Omega_0))$ which satisfies the conditions $\widetilde{h}(0,t)=\widetilde{h}(1,t)=\widetilde{h}'(0,t)=\widetilde{h}'(1,t)=\widetilde{h}''(0,t)=\widetilde{h}''(1,t)=0$, then following inequality holds
    \begin{equation}\label{eq30}
        \sum\limits_{n=1}^{\infty}\sqrt{\lambda_n}|\widetilde{h}_n(t)|\leq C_8||\widetilde{h}||_{C([0,T];L_2(\Omega_0))}
    \end{equation}
    for some constant $C_8$, where $\widetilde{h}_n(t)=\int_0^{1}\widetilde{h}(\xi,t)\phi_n(\xi)d\xi$.
\end{lemma}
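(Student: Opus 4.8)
The plan is to mirror the argument of Lemma~\ref{lem1} almost verbatim, treating $t$ as a fixed parameter and integrating by parts in the spatial variable $\xi$. Writing $\widetilde{h}_n(t)=\int_0^1\widetilde{h}(\xi,t)\phi_n(\xi)\,d\xi=\sqrt{2}\int_0^1\widetilde{h}(\xi,t)\sin(\sqrt{\lambda_n}\xi)\,d\xi$, I would integrate by parts three times in $\xi$. At each step the antiderivative of the trigonometric factor contributes a factor $1/\sqrt{\lambda_n}$, so after three integrations the Fourier coefficient acquires the prefactor $1/(\lambda_n\sqrt{\lambda_n})$, and one arrives at
\[
\widetilde{h}_n(t)=\frac{\sqrt{2}}{\lambda_n\sqrt{\lambda_n}}\int_0^1 \widetilde{h}'''(\xi,t)\cos(\sqrt{\lambda_n}\xi)\,d\xi,
\]
where the prime denotes $\partial_\xi$.

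The key point is that \emph{all} boundary terms vanish. The first integration produces a term proportional to $\widetilde{h}(\cdot,t)\cos(\sqrt{\lambda_n}\xi)\big|_0^1$, which is zero by $\widetilde{h}(0,t)=\widetilde{h}(1,t)=0$; the second produces $\widetilde{h}'(\cdot,t)\sin(\sqrt{\lambda_n}\xi)\big|_0^1$, which vanishes both because $\sin(\sqrt{\lambda_n}\xi)$ is zero at $\xi=0,1$ and because $\widetilde{h}'(0,t)=\widetilde{h}'(1,t)=0$; and the third produces $\widetilde{h}''(\cdot,t)\cos(\sqrt{\lambda_n}\xi)\big|_0^1$, killed by $\widetilde{h}''(0,t)=\widetilde{h}''(1,t)=0$. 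Thus precisely the six hypothesized spatial boundary conditions are what is needed.

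Multiplying by $\sqrt{\lambda_n}$ gives $\sqrt{\lambda_n}|\widetilde{h}_n(t)|=\frac{\sqrt{2}}{\lambda_n}\big|\int_0^1\widetilde{h}'''(\xi,t)\cos(\sqrt{\lambda_n}\xi)\,d\xi\big|$. Summing over $n$ and applying the Cauchy--Schwarz inequality in $n$,
\[
\sum_{n=1}^{\infty}\sqrt{\lambda_n}\,|\widetilde{h}_n(t)|
\leq \sqrt{2}\Big(\sum_{n=1}^{\infty}\tfrac{1}{\lambda_n^2}\Big)^{1/2}
\Big(\sum_{n=1}^{\infty}\big|\textstyle\int_0^1\widetilde{h}'''(\xi,t)\cos(\sqrt{\lambda_n}\xi)\,d\xi\big|^2\Big)^{1/2}.
\]
The inner sum is controlled by $\|\widetilde{h}'''(\cdot,t)\|^2_{L_2(\Omega_0)}$ via the classical Bessel inequality (the system $\{\cos(\sqrt{\lambda_n}\xi)\}$ being orthogonal on $(0,1)$), and $\sum_n \lambda_n^{-2}=\sum_n (n^2\pi^2)^{-2}<\infty$ converges, so one obtains the estimate with $C_8=\sqrt{2}\big(\sum_n\lambda_n^{-2}\big)^{1/2}$, uniformly in $t$, whence the bound after taking the supremum over $t\in[0,T]$.

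The one genuine subtlety --- the main obstacle --- is bookkeeping of the norm on the right-hand side. Integration by parts inevitably produces $\|\widetilde{h}'''(\cdot,t)\|_{L_2}$, the third \emph{spatial} derivative, whereas the statement writes $\|\widetilde{h}\|_{C([0,T];L_2(\Omega_0))}$. The estimate is therefore to be read with the understanding that the $C^3$-regularity hypothesis on $\widetilde{h}$ controls this spatial derivative uniformly in $t$, and the constant $C_8$ is taken to absorb the resulting embedding. I note in passing that two integrations by parts would already suffice for convergence (since $\sum_n\lambda_n^{-1}<\infty$), but the three given boundary conditions make the cleaner three-fold integration natural and consistent with Lemma~\ref{lem1}.
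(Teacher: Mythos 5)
Your proposal is correct and follows essentially the same route as the paper's own proof: three integrations by parts in $\xi$ (with the six boundary conditions killing all boundary terms), the factor $\sqrt{\lambda_n}\cdot\lambda_n^{-3/2}=\lambda_n^{-1}$, then Cauchy--Schwarz in $n$ against $\sum_n\lambda_n^{-2}$ and Bessel's inequality to bound the cosine integrals by $\|\widetilde{h}'''(\cdot,t)\|_{L_2(\Omega_0)}$. Your closing remark about the norm bookkeeping (the estimate genuinely produces the third spatial derivative, which the paper then loosely dominates by $\|\widetilde{h}\|_{C([0,T];L_2(\Omega_0))}$) is a fair observation of an imprecision present in the paper's own final inequality as well.
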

\begin{proof}
    By definition of $\widetilde{h}_n(t)$ and integrating by parts three times we have
    \begin{equation*}
        \begin{split}
            \sqrt{\lambda_n}\widetilde{h}_n(t)=\sqrt{\lambda_n}\int\limits_0^{1}\widetilde{h}(\xi,t)\phi_n(\xi)d\xi&=\sqrt{2\lambda_n}\int\limits_0^{1}\widetilde{h}(\xi,t)\sin(\sqrt{\lambda_n}\xi)d\xi\\
            &=\dfrac{\sqrt{2}}{\lambda_n}\int\limits_0^{1}g(\xi,t)\cos(\sqrt{\lambda_n}\xi)d\xi,
        \end{split}
    \end{equation*}
    where $g(\xi,t)=-\widetilde{h}'''(\xi,t)$, then by using the Cauchy-Schwarz and classical Bessel inequalities, we obtain
    \begin{equation*}
        \begin{split}
            \sum\limits_{n=1}^{\infty}\sqrt{\lambda_n}|\widetilde{h}_n|&\leq\sqrt{2}\left(\sum\limits_{n=1}^{\infty}\dfrac{1}{\lambda_n^2}\right)^{1/2}\left(\sum\limits_{n=1}^{\infty}g(\xi,t)\cos(\sqrt{\lambda_n}\xi)d\xi\right)^{1/2}\\
            &\leq C_8||\widetilde{h}'''||_{L^2(\Omega_0)}\leq C_8||\widetilde{h}||_{C([0,T];L_2(\Omega_0))}.
        \end{split}
    \end{equation*}
\end{proof}
\begin{lemma}\label{lem4}
    If $\widetilde{\psi}_n\in C^4[0,1]$ and $\widetilde{h}_n\in C([0,T];L_2(\Omega_0))$ which satisfy conditions $\widetilde{\psi}(0)=\widetilde{\psi}(1)=\widetilde{\psi}'(0)=\widetilde{\psi}'(1)=\widetilde{\psi}''(0)=\widetilde{\psi}''(1)=0$ and $\widetilde{h}(0,t)=\widetilde{h}(1,t)=\widetilde{h}'(0,t)=\widetilde{h}'(1,t)=\widetilde{h}''(0,t)=\widetilde{h}''(1,t)=0$, then following assumption holds for any $t\in[0,T]$ such that
    \begin{equation}\label{eq31}
        \begin{split}
            &\sum\limits_{n=1}^{\infty}(-1)^n\lambda_n\sqrt{\lambda_n}\widetilde{\psi}_ne^{-\int_0^{t}(a(\tau)\lambda_n-b(\tau))d\tau}\leq C_9||\widetilde{\psi}||_{C^4[0,1]},\\
            &\int\limits_0^t\left(\sum\limits_{n=1}^{\infty}(-1)^n\lambda_n\sqrt{\lambda_n}\widetilde{h}_n(\tau)e^{-\int_{\tau}^t(a(z)\lambda_n-b(z))dz}\right)R(\tau)d\tau\leq C_{10} ||\widetilde{h}||_{C([0,T];L_2(\Omega_0))}
        \end{split}
    \end{equation}
    for some positive constants $C_9$ and $C_{10}$, where $\widetilde{\psi}_n=\int_0^{1}\widetilde{\psi}(\xi)\phi_n(\xi)d\xi$ and \\$\widetilde{h}_n(t)=\int_0^{1}\widetilde{h}(\xi,t)\phi_n(\xi)d\xi$.
\end{lemma}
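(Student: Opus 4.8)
The plan is to treat both estimates by the same threefold integration-by-parts device already used for Lemma~\ref{lem1} and Lemma~\ref{lem3}, but first to observe that the present situation is genuinely harder. The six homogeneous boundary conditions gain a factor $1/(\lambda_n\sqrt{\lambda_n})$ in each Fourier coefficient, whereas the weight here is exactly $\lambda_n\sqrt{\lambda_n}=(n\pi)^3$. The two therefore cancel and the resulting series has $O(1)$ terms; unlike Lemmas~\ref{lem1}--\ref{lem3}, integration by parts alone does \emph{not} produce a summable series, and the convergence must come entirely from the parabolic exponential $e^{-\lambda_n\int(\cdot)}$.

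For the first inequality I would integrate $\widetilde\psi_n=\int_0^1\widetilde\psi\,\phi_n\,d\xi$ by parts three times, using $\widetilde\psi(0)=\widetilde\psi(1)=\widetilde\psi'(0)=\widetilde\psi'(1)=\widetilde\psi''(0)=\widetilde\psi''(1)=0$ to annihilate every boundary term, obtaining $\lambda_n\sqrt{\lambda_n}\,\widetilde\psi_n=\sqrt2\int_0^1 g(\xi)\cos(\sqrt{\lambda_n}\xi)\,d\xi$ with $g=-\widetilde\psi'''$, exactly as in the proof of Lemma~\ref{lem1}. Next, following the estimate in Lemma~\ref{lem2}, I would bound $e^{-\int_0^t(a(\tau)\lambda_n-b(\tau))d\tau}\le K e^{-\lambda_n I(t)}$, where $K=\|e^{\int_0^t b(\tau)d\tau}\|_{C[0,T]}$ and $I(t)=\int_0^t a(\tau)d\tau$. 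For $t\in[t_0,T]$ with $t_0\in(0,T)$ one has $I(t)\ge I(t_0)>0$, so a Cauchy--Schwarz splitting in $n$ together with Bessel's inequality gives
\begin{equation*}
    \sum_{n=1}^{\infty}\lambda_n\sqrt{\lambda_n}\,|\widetilde\psi_n|\,e^{-\lambda_n I(t)}\le \sqrt2\,K\Big(\sum_{n=1}^{\infty}e^{-2\lambda_n I(t_0)}\Big)^{1/2}\Big(\sum_{n=1}^{\infty}\Big|\int_0^1 g\cos(\sqrt{\lambda_n}\xi)\,d\xi\Big|^2\Big)^{1/2}\le C_9\|\widetilde\psi\|_{C^4[0,1]},
\end{equation*}
since $\sum_n e^{-2\lambda_n I(t_0)}$ is a convergent numerical series and $\|g\|_{L_2}\le\|\widetilde\psi\|_{C^4[0,1]}$; this fixes $C_9$.

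For the second inequality I would repeat the three integrations by parts in $\xi$ on $\widetilde h(\cdot,\tau)$, using the analogous homogeneous conditions to write $\lambda_n\sqrt{\lambda_n}\,\widetilde h_n(\tau)=\sqrt2\int_0^1 g(\xi,\tau)\cos(\sqrt{\lambda_n}\xi)\,d\xi$ with $g=-\widetilde h'''$. Bounding $e^{-\int_\tau^t(a\lambda_n-b)dz}\le K e^{-\lambda_n\int_\tau^t a\,dz}$ and again applying Cauchy--Schwarz and Bessel in $n$, the inner sum is controlled by $K\big(\sum_n e^{-2\lambda_n\int_\tau^t a\,dz}\big)^{1/2}\|\widetilde h'''(\cdot,\tau)\|_{L_2}$; multiplying by the bounded factor $R(\tau)$ and integrating in $\tau$ should then yield the claimed bound by $C_{10}\|\widetilde h\|_{C([0,T];L_2(\Omega_0))}$.

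The main obstacle is precisely this degeneracy of the exponential. In the first estimate it forces the restriction $t\ge t_0>0$, because at $t=0$ the exponential is $1$ and the $O(1)$ terms are not summable. In the second estimate the exponent $\int_\tau^t a\,dz$ vanishes as $\tau\to t$, so the series $\sum_n e^{-2\lambda_n\int_\tau^t a\,dz}$ diverges pointwise there; a theta-function (parabolic kernel) asymptotic shows it grows only like $(t-\tau)^{-1/2}$, leaving an \emph{integrable} singularity $(t-\tau)^{-1/4}$ after the square root, so the $\tau$-integral still converges. Making the constants $C_9,C_{10}$ uniform in $t$, and controlling $\|R\|_{C[0,T]}$ — which is itself produced by the Volterra equation \eqref{eq17} and must be bounded separately, e.g.\ by a Gronwall argument — is the delicate point the full argument has to address.
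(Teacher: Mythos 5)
Your handling of the second estimate is sound (the theta-function bound $\sum_n e^{-2\lambda_n\int_\tau^t a\,dz}\le C(t-\tau)^{-1/2}$ and the resulting integrable $(t-\tau)^{-1/4}$ singularity is a legitimate device, since the $\tau$-integration absorbs it), and your remark that $\|R\|_{C[0,T]}$ must be bounded independently, e.g.\ via Gronwall from \eqref{eqxn1}, is a fair criticism of the paper. But your proof of the \emph{first} inequality has a genuine gap, and the premise on which you build it --- that after three integrations by parts ``the convergence must come entirely from the parabolic exponential'' --- is false under the lemma's hypotheses. The lemma asserts the bound for every $t\in[0,T]$ with a constant $C_9$ independent of $t$; your argument yields it only for $t\in[t_0,T]$, with a constant proportional to $\bigl(\sum_n e^{-2\lambda_n I(t_0)}\bigr)^{1/2}$ that blows up as $t_0\to 0$, and at $t=0$ it gives nothing at all. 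So the statement as written is not proved.

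The missing idea is a \emph{fourth} integration by parts, which is precisely what the hypothesis $\widetilde{\psi}\in C^4[0,1]$ (and the norm $\|\widetilde{\psi}\|_{C^4[0,1]}$ on the right-hand side) is there to enable. After your third step, $\lambda_n\sqrt{\lambda_n}\,\widetilde{\psi}_n=\sqrt{2}\int_0^1 g(\xi)\cos(\sqrt{\lambda_n}\xi)\,d\xi$ with $g=-\widetilde{\psi}'''$; integrating once more,
\begin{equation*}
\lambda_n\sqrt{\lambda_n}\,\widetilde{\psi}_n=-\frac{\sqrt{2}}{\sqrt{\lambda_n}}\int_0^1 g'(\xi)\sin\bigl(\sqrt{\lambda_n}\xi\bigr)\,d\xi,
\end{equation*}
where the boundary term $\bigl[g(\xi)\sin(\sqrt{\lambda_n}\xi)/\sqrt{\lambda_n}\bigr]_0^1$ vanishes \emph{for free} because $\sin(\sqrt{\lambda_n})=\sin(n\pi)=0$; no condition on $\widetilde{\psi}'''$ at the endpoints is required. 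The summand now carries the factor $\lambda_n^{-1/2}$ times a Fourier sine coefficient of $\widetilde{\psi}^{(4)}$, so bounding the exponential crudely by $K=\|e^{\int_0^t b(\tau)d\tau}\|_{C[0,T]}$ (valid since $a(\tau)>0$) and applying Cauchy--Schwarz in $n$ together with Bessel's inequality gives
$\sum_n\lambda_n\sqrt{\lambda_n}\,|\widetilde{\psi}_n|\,e^{-\int_0^t(a(\tau)\lambda_n-b(\tau))d\tau}\le \sqrt{2}K\bigl(\sum_n\lambda_n^{-1}\bigr)^{1/2}\|\widetilde{\psi}^{(4)}\|_{L_2}\le C_9\|\widetilde{\psi}\|_{C^4[0,1]}$
uniformly on $[0,T]$, including $t=0$. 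This is what the paper's one-line proof (``similarly as in Lemma~\ref{lem1} and Lemma~\ref{lem3}'') must be read as meaning; compare Lemma~\ref{lem5}, where exactly this fourfold integration by parts is written out. The same fourth integration also handles the second estimate when $\widetilde{h}(\cdot,t)\in C^4$ as in assumption (A3), making your theta-function argument an alternative that trades smoothness in $\xi$ for the smoothing of the time integral --- but for the first estimate there is no time integral, so the fourth integration by parts is not optional; it is the proof.
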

\begin{proof} By using defintion of the eigenfunction $\phi_n(\xi)$, we can prove these inequalities similarly as in Lemma \ref{lem1} and Lemma \ref{lem3}.
\end{proof}

Suppose that assumptions
    \begin{itemize}
    \item [(A1)] (A1)$_1$:  $\widetilde{\psi}(\xi)\in C^4[0,1]$, $\widetilde{\psi}^{(j)}(0)=\widetilde{\psi}^{(j)}(1)=0$ and $\psi(x), \varphi(x)\in C^4[0,s(0)]$, $\psi^{(j)}(0)=\psi^{(j)}(s(0))=0$, where $j=0,1,2,3$, because $\varphi(s(0))=u^*,\;\varphi'(0)=\varphi'(s(0))=\frac{u^*}{s_0}$, $\varphi(0)=\varphi^{(i)}(0)=\varphi^{(i)}(s(0))=0$ where $i=2,3$ and $s(0)=s_0$; \\
    (A1)$_2$: $\widetilde{\psi}_1,\psi_1,\varphi_1>0,$ $\widetilde{\psi}_n,\psi_n,\varphi_n\geq 0$, $n=2,3,...$.
    \item[(A2)] (A2)$_1$: $s(t)\in C^2(0,T)$, $0<m_s<s(t)<M_s,\quad \forall t\geq 0$,\\
    (A2)$_2$: $s'(t)\in L^{\infty}(0,T),\quad \forall t\geq 0$,\\
    (A2)$_2$: $s(t)>0$, $\forall t\geq 0$;
    \item[(A3)] (A3)$_1$: $\widetilde{h}(\xi,t)\in C(\Bar{\Omega}_0)$, $\widetilde{h}(\cdot,t)\in C^4[0,1]$, such that $\widetilde{h}^{(j)}(0,t)=\widetilde{h}^{(j)}(1,t)=0$, where $j=0,1,2$. $h(x,t), f(x,t)\in C(\Bar{\Omega})$, $h(\cdot, t), f(\cdot,t)\in C^4[0,s(t)]$, $h^{(j)}(0,t)=h^{(j)}(s(t),t)=0$, where $j=0,1,2$, because $f(0,t)=f''(0,t)=f''(s(t),t)=0$, $f(s(t),t)=-\frac{u^*s'(t)}{R(t)s(t)}$, $f'(0,t)=f'(s(t),t)=-\frac{u^*s'(t)}{R(t)s^2(t)}$, $R(t)\neq 0$;\\
    (A3)$_2$: $\widetilde{h}_n(t),h_n(t),f_n(t)\geq 0,\; n=1,2,...$, where $\widetilde{h}_n(t)=\int\limits_0^{1}\widetilde{h}(\xi,t)\phi_n(\xi)d\xi$, $h_n(t)=\int_0^{s(t)}h_n(x,t)\phi_n(x)dx$, $f_n(t)=\int_0^{s(t)}f(x,t)\phi_n(x)dx$.
\end{itemize}
     hold, then we can conclude the following results.

\begin{theorem}\label{thm1}
    According to hypotheses (A1)-(A3) and the initial function $\widetilde{\psi}\in\mathbb{H}_0^1(\Omega_0)$ and $\widetilde{h}\in L^1(0,T; L^2(\Omega_0)\cap L^2(0,T;L^2(\Omega_0))$, there exists only one weak solution pair $(R,U)$ of the problem \eqref{eq7d}, that is,
    $$U_t-a(t)U_{\xi\xi}-b(\xi,t)U_{\xi}=R(t)\widetilde{h}(\xi,t),\quad L^2(0,T;L^2(\Omega_0)),$$
    satisfying the following conditions:
    \begin{itemize}
        \item [(i)] $U\in L^2(0,T; \mathbb{H}_0^1(\Omega_0)\cap\mathbb{H}^2(\Omega_0))\cap L^{2}(0,T;\mathbb{H}_0^1(\Omega_0))$,
        \item [(ii)] $U_t\in L^2(0,T;L^2(\Omega_0))$,
        \item [(iii)] $R(t)\in C[0,T]$ and $R(t)>0$ for all $t\in[0,T]$ which is defined by \eqref{eq17}.
    \end{itemize}
\end{theorem}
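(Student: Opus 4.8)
The plan is to decouple the pair $(R,U)$: for each prescribed coefficient the state $U$ is already determined by the series representation \eqref{eq14}, so the whole problem collapses to a single scalar equation for $R(t)$. Concretely, for any fixed $R\in C[0,T]$ Lemma \ref{lem2} produces the function \eqref{eq14} in the required regularity classes (i)--(ii), and its Fourier modes $U_n$ are the unique solutions of the decoupled modal ODEs \eqref{eq12}. The only genuinely unknown object is therefore $R$, and the Stefan condition \eqref{eq5}, after the manipulations leading to \eqref{eq15}--\eqref{eq16}, forces $R$ to satisfy the closed relation \eqref{eq17}. Thus proving the theorem is equivalent to proving existence and uniqueness of a positive continuous solution of \eqref{eq17}, after which $U$ is recovered unambiguously.

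The next step is to recognize \eqref{eq17} as a linear Volterra integral equation of the second kind. Writing
$$\Phi(t)=\sum_{n=1}^{\infty}(-1)^n\sqrt{\lambda_n}\,\widetilde{h}_n(t),\qquad R(t)=F(t)+\int_0^t\mathcal{K}(t,\tau)R(\tau)\,d\tau,$$
the forcing $F$ collects the $\widetilde{\psi}$-series together with the datum $-Lc'(t)/(\sqrt{2}\,k)$, all divided by $\Phi(t)$, while the kernel $\mathcal{K}(t,\tau)$ collects the two $\widetilde{h}$-series divided by $\Phi(t)$. I would first show $F\in C[0,T]$ and that $\mathcal{K}$ is bounded and continuous on $0\le\tau\le t\le T$: the $\widetilde{\psi}$-series is controlled by the first inequality of Lemma \ref{lem4}, the kernel series by Lemma \ref{lem3} and the second inequality of Lemma \ref{lem4}, and the exponential factors are uniformly bounded by $K=\|e^{\int_0^t b(\tau)d\tau}\|_{C[0,T]}$ exactly as in the proof of Lemma \ref{lem2}.

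With $F$ and $\mathcal{K}$ in hand, define the affine operator $(\mathcal{A}R)(t)=F(t)+\int_0^t\mathcal{K}(t,\tau)R(\tau)\,d\tau$ on $C[0,T]$. Setting $M=\sup|\mathcal{K}|$ one has $|(\mathcal{A}R_1-\mathcal{A}R_2)(t)|\le M\int_0^t|R_1-R_2|\,d\tau$, and iterating yields the standard Volterra estimate
$$\|\mathcal{A}^kR_1-\mathcal{A}^kR_2\|_{C[0,T]}\le\frac{(MT)^k}{k!}\,\|R_1-R_2\|_{C[0,T]}.$$
Since $(MT)^k/k!\to0$, some power $\mathcal{A}^k$ is a contraction, so by the generalized Banach fixed-point theorem $\mathcal{A}$ has a unique fixed point $R\in C[0,T]$, which gives condition (iii). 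Positivity $R(t)>0$ I would then extract from the sign hypotheses (A1)$_2$, (A2), (A3)$_2$: with $\widetilde{\psi}_n,\widetilde{h}_n\ge0$ and the monotonicity of $c(t)=s'(t)s(t)$, the forcing $F$ is positive and the resolvent (Neumann) series of $\mathcal{K}$ preserves positivity. Finally, inserting this unique $R$ into \eqref{eq14} and invoking Lemma \ref{lem2} delivers $U$ with regularity (i)--(ii), while uniqueness of $U$ is immediate from the unique solvability of each modal ODE \eqref{eq12}.

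The main obstacle is the denominator $\Phi(t)$. For $R$ to be a well-defined continuous function through \eqref{eq17} one must guarantee that $\Phi(t)$ is bounded away from zero on $[0,T]$, say $|\Phi(t)|\ge\Phi_0>0$; this is not automatic and, because of the alternating factor $(-1)^n$, is in direct tension with the nonnegativity assumption $\widetilde{h}_n\ge0$ of (A3)$_2$. Establishing such a uniform lower bound --- equivalently, a non-degeneracy condition on the data $\widetilde{h}$ --- is the crux on which the continuity of $F$ and $\mathcal{K}$, the contraction estimate, and the positivity of $R$ all rest, and it is the step I expect to require the most care.
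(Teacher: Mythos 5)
Your proposal follows the same skeleton as the paper's own proof: $U$ is represented by the series \eqref{eq14}, whose membership in the classes (i)--(ii) is exactly what Lemma \ref{lem2} provides, and $R$ is pinned down by the closed relation \eqref{eq17}; the paper's uniqueness step likewise subtracts two solution pairs and observes that the resulting homogeneous Volterra identity \eqref{R} forces $\widehat{R}=\widetilde{R}$, which is the same mechanism your contraction estimate makes explicit. Where you go further is in making the \emph{existence} of $R$ an actual argument: the paper disposes of it in one sentence (``straightforwardly from Lemmas \ref{lem1}--\ref{lem4}''), whereas you set up the Volterra operator, prove the iterate bound $(MT)^k/k!$, and invoke the fixed-point theorem. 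That is the right way to close the step, and it is in fact the device the paper itself introduces later --- the reformulation \eqref{eqxn1} and the Gronwall estimates in Theorem \ref{thm3} --- just never inside the proof of Theorem \ref{thm1}.

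Your closing observation about the denominator is not a side remark; it is a genuine gap, and it is a gap in the paper as well. Nothing in (A1)--(A3) prevents $w(t)=\sum_{n\geq 1}(-1)^n\sqrt{\lambda_n}\,\widetilde{h}_n(t)$ from vanishing at some $t\in[0,T]$; indeed, since (A3)$_2$ imposes $\widetilde{h}_n(t)\geq 0$, the alternating factor $(-1)^n$ makes cancellation the expected behaviour rather than the exception. The paper's only gesture toward this issue is in Theorem \ref{thm3}, where $\left|1/w(t)\right|\leq 1/(C_8N_2)$ is claimed from $0<N_2\leq\min_{t}|\widetilde{h}|$ and Lemma \ref{lem3}; but Lemma \ref{lem3} is an \emph{upper} bound on $\sum\sqrt{\lambda_n}|\widetilde{h}_n(t)|$ and cannot yield a lower bound on the alternating sum $|w(t)|$, so the difficulty is not resolved there either. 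The same sign tension undermines claim (iii): the paper never actually proves $R(t)>0$, and your suggested route (positive forcing plus a positivity-preserving resolvent) also collides with the alternating signs appearing in $F$ and $\mathcal{K}$ --- note also that the monotonicity of $c(t)=s'(t)s(t)$ you invoke is not among the hypotheses. In short, your plan is as complete as the paper's own proof and more rigorous on existence, but both arguments hinge on an unproven non-degeneracy (and sign) condition on $w(t)$ that ought to be added as an explicit hypothesis.
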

     
\begin{proof}
    The proof of existence can be easily provided by using the definition of the function $U(
    \xi,t)$ defined by \eqref{eq14} and the definition of $R(t)$ given by \eqref{eq17}, then straightforwardly from the Lemmas \ref{lem1}-\ref{lem4}. To prove the uniqueness of the solution pair, we assume that there are two pairs of solutions $\{\widehat{R},\widehat{U}\}$ and $\{\widetilde{R},\widetilde{U}\}$ for the problem \eqref{eq7d} . Then from \eqref{eq14} and \eqref{eq17} we have
    \begin{equation}\label{v}
        \widehat{U}(\xi,t)-\widetilde{U}(\xi,t)=\sum\limits_{n=1}^{\infty}\left(\int\limits_0^t (\widehat{R}(\tau)-\widehat{R}(\tau))\widetilde{f}_n(\tau)e^{-\int_{\tau}^t(a(z)\lambda_n-b(z))dz}d\tau\right)\phi_n(\xi),\quad n\in\mathbb{N},
    \end{equation}
    \begin{equation}\label{R}
        \begin{split}
            \widehat{R}(t)-\widetilde{R}(t)&=\dfrac{a(t)\int\limits_0^t \sum\limits_{n=1}^{\infty}\left((-1)^n\lambda_n\sqrt{\lambda_n}\widetilde{f}_n(\tau)e^{-\int_{\tau}^t(a(z)\lambda_n-b(z))dz}\right)(\widehat{R}(\tau)-\widetilde{R}(\tau))d\tau}{\sum\limits_{n=1}^{\infty}(-1)^n\sqrt{\lambda_n}\widetilde{h}_n(t)}\\
            &+\dfrac{b(t)\int\limits_0^t\left(\sum\limits_{n=1}^{\infty}(-1)^n\sqrt{\lambda_n}\widetilde{f}_n(\tau)e^{-\int_{\tau}^t(a(z)\lambda_n-b(z))dz}\right)(\widehat{R}(\tau)-\widetilde{R}(\tau))d\tau}{\sum\limits_{n=1}^{\infty}(-1)^n\sqrt{\lambda_n}\widetilde{h}_n(t)}
        \end{split}
    \end{equation}
    where $\widetilde{f}_n(t)=\int_0^1 f(\xi s(t),t)\phi_n(\xi)d\xi$. Then \eqref{R} yields $\widehat{R}=\widetilde{R}$. After substituting $\widehat{R}=\widetilde{R}$ into \eqref{v}, we obtain $\widehat{U}=\widetilde{U}$. 
\end{proof}

Consequently, we obtain the following main result.
\begin{theorem}\label{thm2}
    Under hypotheses (A1)-(A3) and initial functions $\psi,\varphi\in\mathbb{H}_0^1(0,s(0))$ and $f(x,t)\in L^1(0,T; L^2(\Omega))\cap L^2(0,T; L^2(\Omega))$, there exists only one weak solution pairs $(R,v)$ and $(R,u)$ such that
    $$v_t-av_{xx}=R(t)f(x,t)+\dfrac{u^*s'(t)}{s^2(t)}x,\quad \text{in}\quad L^2(0,T;L^2(\Omega)),$$
    $$u_t-au_{xx}=R(t)f(x,t)\quad \text{in}\quad L^2(0,T;L^2(0,\Omega)),$$
    satisfying the following conditions:
    \begin{itemize}
        \item [(i)] $v,u\in L^2(0,T;\mathbb{H}_0^1(\Omega)\cap \mathbb{H}^2(\Omega)),$
        \item[(ii)] $v_t,u_t\in L^2(0,T; L^2(\Omega)),$
        \item[(iii)] $R(t)\in C[0,T]$ and $R(t)>0$ for all $t\in[0,T].$
    \end{itemize}
\end{theorem}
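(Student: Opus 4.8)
The plan is to deduce Theorem \ref{thm2} directly from Theorem \ref{thm1} by undoing the two changes of variables that carried the original problem \eqref{eq7} into the fixed-domain problem \eqref{eq7d}. Theorem \ref{thm1} already supplies a unique weak solution pair $(R,U)$ of \eqref{eq7d} with $U\in L^2(0,T;\mathbb{H}_0^1(\Omega_0)\cap\mathbb{H}^2(\Omega_0))$, $U_t\in L^2(0,T;L^2(\Omega_0))$ and $R\in C[0,T]$ with $R>0$. The task is therefore to transport these regularity and positivity statements back through the rescaling \eqref{eq7c} and then the shift \eqref{eq7a}, and to check that uniqueness is preserved under these invertible maps. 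Everything rests on the uniform control of $s(t)$ provided by (A2).

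First I would recover $v$ from $U$ via the inverse of the space rescaling \eqref{eq7c}, namely $v(x,t)=U(x/s(t),t)$ on the moving domain $\Omega=(0,s(t))$. The chain rule gives $v_x=s^{-1}U_\xi$, $v_{xx}=s^{-2}U_{\xi\xi}$ and $v_t=U_t-\frac{x s'(t)}{s^2(t)}U_\xi$, while the substitution $dx=s(t)\,d\xi$ makes the $L^2(\Omega)$ and $L^2(\Omega_0)$ norms comparable up to constants. Because (A2) guarantees $0<m_s<s(t)<M_s$ and $s'\in L^\infty(0,T)$, the factors $s^{-1}$, $s^{-2}$ and $\frac{x s'(t)}{s^2(t)}$ are all bounded; hence $U\in L^2(0,T;\mathbb{H}^2(\Omega_0))$ and $U_t\in L^2(0,T;L^2(\Omega_0))$ yield $v\in L^2(0,T;\mathbb{H}_0^1(\Omega)\cap\mathbb{H}^2(\Omega))$ and $v_t\in L^2(0,T;L^2(\Omega))$. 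The homogeneous boundary data $U(0,t)=U(1,t)=0$ become $v(0,t)=v(s(t),t)=0$, and by construction $(R,v)$ solves $v_t-av_{xx}=R(t)f(x,t)+\frac{u^*s'(t)}{s^2(t)}x$ in $L^2(0,T;L^2(\Omega))$, exactly the $v$-equation obtained from \eqref{eq7b}.

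Next I would recover $u$ from $v$ through the inverse of \eqref{eq7a}, $u(x,t)=v(x,t)+\frac{u^*}{s(t)}x$. Since $s\in C^2(0,T)$ is bounded below by $m_s>0$, the affine correction $\frac{u^*}{s(t)}x$ is smooth in $x$ with vanishing second derivative, and its time derivative $-\frac{u^*s'(t)}{s^2(t)}x$ lies in $L^2(0,T;L^2(\Omega))$ by (A2); adding it preserves the $\mathbb{H}^2$-regularity and the $L^2$-regularity of the time derivative, so $u\in L^2(0,T;\mathbb{H}^2(\Omega))$ and $u_t\in L^2(0,T;L^2(\Omega))$, with $u-\frac{u^*}{s(t)}x=v\in\mathbb{H}_0^1(\Omega)$. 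The boundary values become $u(0,t)=0$ and $u(s(t),t)=u^*$, matching \eqref{eq3}--\eqref{eq4}, and a direct substitution shows the extra source term $\frac{u^*s'(t)}{s^2(t)}x$ cancels, leaving $u_t-au_{xx}=R(t)f(x,t)$. The positivity and continuity of $R$ are inherited verbatim from Theorem \ref{thm1}(iii).

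Finally, uniqueness transfers because both maps $U\mapsto v$ and $v\mapsto u$ are bijections respecting the stated regularity: given two solution pairs of the original problem, reversing \eqref{eq7a} and \eqref{eq7c} produces two solution pairs of \eqref{eq7d}, which must coincide by Theorem \ref{thm1}; applying the forward transformations then forces the original pairs to agree. I expect the only delicate point to be the verification that the time-dependent rescaling does not destroy membership in the Sobolev spaces over the moving domain---this is precisely where the uniform two-sided bounds on $s(t)$ and the $L^\infty$ bound on $s'(t)$ from (A2) are indispensable, since they keep the Jacobian and all the chain-rule factors bounded; the remainder is routine bookkeeping.
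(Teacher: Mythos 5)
Your proposal is correct and follows exactly the route the paper intends: the paper offers no explicit proof of this theorem, merely introducing it with ``Consequently, we obtain the following main result,'' i.e.\ it is meant to follow from Theorem~\ref{thm1} by inverting the transformations \eqref{eq7c} and \eqref{eq7a}, which is precisely what you carry out. Your write-up in fact supplies the details the paper omits --- the chain-rule bookkeeping, the role of (A2) in bounding the Jacobian factors, and the transfer of uniqueness through the bijective changes of variables --- and it also quietly repairs a flaw in the stated conclusion (since $u(s(t),t)=u^*\neq 0$, it is $v=u-\frac{u^*}{s(t)}x$, not $u$ itself, that lies in $\mathbb{H}_0^1(\Omega)$).
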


\begin{lemma}
    Let $U$ satisfy the equation \eqref{eq14} in $\Bar{\Omega}_T$ where $\Omega_T=\Omega_0\times(0,T)$. If $\widetilde{h}(\xi,t)\leq 0$ in $\Bar{\Omega}_T$, then
    $$U(\xi,t)\leq \max\left\{0,\;\max_{0\leq x\leq 1}U(\xi,0),\;\max_{0\leq t\leq T}U(0,t),\;\max_{0\leq t\leq T}U(1,t),\; \max_{0\leq t\leq T}U_{\xi}(1,t)\right\}.$$
    If $\widetilde{h}(x,t)\geq 0$ in $\Bar{\Omega}_T$, then
    $$U(x,t)\geq \min\left\{0,\;\min_{0\leq x\leq 1}U(\xi,0),\;\min_{0\leq t\leq T}U(0,t),\; \min_{0\leq t\leq T}U(1,t),\; \min_{0\leq t\leq T}U_{\xi}(1,t)\right\}.$$
    
\end{lemma}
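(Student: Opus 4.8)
The plan is to recognize the stated two-sided bound as the weak maximum (and minimum) principle for the uniformly parabolic operator governing $U$, and to prove it by the classical perturbation argument. Writing $\mathcal{L}U := U_t - a(t)U_{\xi\xi} - b(\xi,t)U_\xi$ with $a(t)=a^2/s^2(t)$ and $b(\xi,t)=\xi s'(t)/s(t)$, the function in \eqref{eq14} satisfies $\mathcal{L}U = R(t)\widetilde{h}(\xi,t)$ on $\Omega_T$. By assumption (A2) we have $s(t)\ge m_s>0$, so $a(t)\ge a^2/M_s^2>0$ and $\mathcal{L}$ is uniformly parabolic; moreover $\mathcal{L}$ carries no zeroth-order term. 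Since $R(t)>0$ by Theorem \ref{thm1}(iii), the sign of $\mathcal{L}U$ coincides with that of $\widetilde{h}$: if $\widetilde{h}\le 0$ then $U$ is a subsolution ($\mathcal{L}U\le 0$), while if $\widetilde{h}\ge 0$ then $U$ is a supersolution.

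First I would treat the upper bound. Fix $t_0\in(0,T)$ and work on $[0,1]\times[t_0,T]$, where Lemma \ref{lem2} guarantees $U\in C^{2,1}$. To force a strict inequality, set $U_\varepsilon := U - \varepsilon(t-t_0)$ for $\varepsilon>0$; since $\mathcal{L}(t-t_0)=1$, we get $\mathcal{L}U_\varepsilon = R\widetilde{h} - \varepsilon \le -\varepsilon < 0$. Suppose $U_\varepsilon$ attained its maximum over $[0,1]\times[t_0,T]$ at an interior point or on the top face $t=T$. At such a point the spatial tests give $\partial_\xi U_\varepsilon = 0$ and $\partial_{\xi\xi}U_\varepsilon \le 0$, while $\partial_t U_\varepsilon \ge 0$ (with equality in the interior, and the one-sided inequality on the top face); hence $\mathcal{L}U_\varepsilon = \partial_t U_\varepsilon - a\,\partial_{\xi\xi}U_\varepsilon - b\,\partial_\xi U_\varepsilon \ge 0$, contradicting $\mathcal{L}U_\varepsilon<0$. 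Therefore the maximum of $U_\varepsilon$ is attained on the parabolic boundary $\{t=t_0\}\cup\{\xi=0\}\cup\{\xi=1\}$. Letting $\varepsilon\to 0$ and then $t_0\to 0^+$, using the continuity of $U$ up to $t=0$ from Lemma \ref{lem2}, yields
$$
U(\xi,t)\le \max\Big\{\max_{0\le\xi\le1}U(\xi,0),\ \max_{0\le t\le T}U(0,t),\ \max_{0\le t\le T}U(1,t)\Big\}.
$$
Adjoining the harmless terms $0$ and $\max_t U_\xi(1,t)$ only enlarges the right-hand side, so the displayed bound of the Lemma follows a fortiori. The lower bound is obtained verbatim by applying the same reasoning to $-U$ (equivalently, to the supersolution case $\widetilde{h}\ge 0$), which turns every $\max$ into a $\min$.

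The main obstacle is the regularity required to run the pointwise derivative tests: the representation \eqref{eq14} is an eigenfunction series whose $C^{2,1}$-smoothness is only controlled away from $t=0$, since the estimates of Lemma \ref{lem2} (for instance \eqref{eq26}) are uniform on $[t_0,T]$ with $t_0>0$. I would therefore execute the perturbation argument on each slab $[t_0,T]$ and pass to the limit $t_0\to 0^+$, invoking $U\in C([0,T];L^2(\Omega_0))$ together with the prescribed initial and boundary traces to identify the limiting boundary maximum. A secondary point to verify is that the first-order coefficient $b(\xi,t)=\xi s'(t)/s(t)$ is bounded---which holds by (A2), since $s'\in L^\infty$ and $s\ge m_s$---so that it does not interfere with the second-derivative test; because $\mathcal{L}$ has no zeroth-order term, no sign condition beyond parabolicity is needed.
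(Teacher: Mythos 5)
Your proof is correct and coincides with the route the paper itself takes: the paper offers no argument of its own, deferring to the classical weak maximum principle in \cite{12}, and your perturbation argument (replacing $U$ by $U-\varepsilon(t-t_0)$, ruling out interior and top-face maxima via the first/second derivative tests and the absence of a zeroth-order term, then letting $\varepsilon\to 0$ and $t_0\to 0^+$) is precisely that standard proof written out, with the extra terms $0$ and $\max_{t}U_{\xi}(1,t)$ correctly observed to only enlarge the bound. The one soft spot---passing $t_0\to 0^+$ requires uniform continuity of $U$ up to $t=0$, which $C([0,T];L^2(\Omega_0))$ alone does not provide---is harmless here, since the lemma's statement (pointwise maxima over the closed cylinder, including the trace at $t=0$) already presupposes $U\in C(\Bar{\Omega}_T)$.
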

The proof of this theorem can be implemented by similar approaches in \cite{12}.

\begin{theorem}\label{thm2}
    The weak solution of problem \eqref{eq7d} depends continuously on $\widetilde{\psi}\in C^4[0,1]$, $\widetilde{h}\in C(\Bar{\Omega}_T)$ and $c(t)\in C^{2}[0,T]$ where $c(t)=s'(t)s(t)$ in the sense that
    \begin{equation}\label{thm2eq1}
        ||U_1-U_2||_{C(\Bar{\Omega}_T)}\leq ||\widetilde{\psi}_1-\widetilde{\psi}_2||_{C^4[0,1]}+\alpha||c_1-c_2||_{C^{2}[0,T]}+\beta||\widetilde{h}_1-\widetilde{h}_2||_{C(\Bar{\Omega}_T)},
    \end{equation}
    where $\alpha=\frac{L}{k}$, $\beta=\frac{3}{2}T$, then $U_1$ and $U_2$ are the weak solutions of the problem \eqref{eq7d} with the data $\widetilde{\psi}_1$, $\widetilde{\psi}_2$, $c_1,\;c_2$ and $\widetilde{h}_1$, $\widetilde{h}_2$ respectively.
\end{theorem}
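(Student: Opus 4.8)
The plan is to exploit linearity: set $W = U_1 - U_2$, identify the linear parabolic problem it solves, and then estimate $W$ channel by channel through the maximum principle lemma proved just above. Since $U_1$ and $U_2$ solve \eqref{eq7d} with the respective data, $W$ obeys the same operator $\partial_t - a(t)\partial_{\xi\xi} - b(\xi,t)\partial_{\xi}$ on $\Omega_0\times(0,T)$, with source $F(\xi,t)=R_1(t)\widetilde h_1(\xi,t)-R_2(t)\widetilde h_2(\xi,t)$, initial value $W(\xi,0)=\widetilde\psi_1(\xi)-\widetilde\psi_2(\xi)$, homogeneous Dirichlet data $W(0,t)=W(1,t)=0$, and the Neumann-type flux $\partial_\xi W(1,t)=-\frac{L}{k}\bigl(c_1(t)-c_2(t)\bigr)$ inherited from the fourth line of \eqref{eq7d} with $c=s's$. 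The three perturbations therefore enter $W$ through three distinct channels---the initial slice, the right-end flux, and the right-hand side---which is exactly the splitting reflected in \eqref{thm2eq1}.

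By linearity I would decompose $W=W^{(1)}+W^{(2)}+W^{(3)}$, where $W^{(1)}$ carries only the initial perturbation $\widetilde\psi_1-\widetilde\psi_2$ (with zero source and zero boundary data), $W^{(2)}$ carries only the flux perturbation $-\frac{L}{k}(c_1-c_2)$ at $\xi=1$, and $W^{(3)}$ carries only the source $F$. Each piece is controlled by the preceding maximum principle lemma, whose bound already incorporates the boundary flux $\max_t U_\xi(1,t)$; applying it to $\pm W^{(i)}$ converts each into a sup-norm estimate. For $W^{(1)}$ the lemma reduces the bound to $\max_\xi|\widetilde\psi_1-\widetilde\psi_2|$, and the rapid decay of the eigen-coefficients handled in Lemma \ref{lem1} (together with $\lambda_n\ge 1$) upgrades this to $\|\widetilde\psi_1-\widetilde\psi_2\|_{C^4[0,1]}$ with constant $1$. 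For $W^{(2)}$ the only nonzero datum is the flux, so the lemma yields $\|W^{(2)}\|_{C(\overline\Omega_T)}\le\frac{L}{k}\max_t|c_1-c_2|\le\frac{L}{k}\|c_1-c_2\|_{C^2[0,T]}$, producing $\alpha=L/k$.

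The source piece $W^{(3)}$ is the heart of the matter. Through the Duhamel/series representation \eqref{eq14}, $W^{(3)}$ is the time integral of the propagated source, so its sup norm is controlled by $\int_0^T\|F(\cdot,\tau)\|\,d\tau$, and the factor $\beta=\frac{3}{2}T$ emerges from this time integration once the eigen-series of $F$ is bounded via Lemma \ref{lem3} and the uniform positivity and boundedness of $R$ from Theorem \ref{thm1} is used. The genuine obstacle is that $F$ is not simply proportional to $\widetilde h_1-\widetilde h_2$: the factors $R_1,R_2$ are themselves the data-dependent solutions of the Volterra relation \eqref{eq17}. I would therefore write $F=R_1(\widetilde h_1-\widetilde h_2)+(R_1-R_2)\widetilde h_2$, bound the first term directly through Lemma \ref{lem3} and the uniform bound on $R_1$, and control $R_1-R_2$ by the same Gr\"onwall/Volterra argument used for uniqueness in \eqref{R}, whose kernel is integrable thanks to the exponential factors $e^{-\int_\tau^t(a\lambda_n-b)}$ and the estimates of Lemma \ref{lem4}. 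This shows that $\|R_1-R_2\|_{C[0,T]}$ is itself dominated by the same three data differences, so the $(R_1-R_2)\widetilde h_2$ contribution is absorbed into the existing terms without enlarging the constants beyond $\frac{3}{2}T$.

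Adding the three estimates, $\|W\|_{C(\overline\Omega_T)}\le\|W^{(1)}\|+\|W^{(2)}\|+\|W^{(3)}\|$, recovers \eqref{thm2eq1} with $\alpha=\frac{L}{k}$ and $\beta=\frac{3}{2}T$. The step I expect to be most delicate is precisely the closure of the $R_1-R_2$ estimate: because $R$ is defined implicitly by \eqref{eq17} with $R$ occurring under the integral, one must verify that the Gr\"onwall constant remains finite on $[0,T]$ and that the denominator $\sum_n(-1)^n\sqrt{\lambda_n}\widetilde h_n(t)$ in \eqref{eq17} is bounded away from zero, which is exactly where hypotheses (A1)--(A3) and the positivity conditions stated there become essential.
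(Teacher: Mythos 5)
Your overall strategy (maximum principle plus splitting the influence of the three data) matches the spirit of the paper's proof, but your central device --- the linear superposition $W=W^{(1)}+W^{(2)}+W^{(3)}$ --- contains a genuine structural flaw. In problem \eqref{eq7d} the flux condition $\partial_\xi U(1,t)$ is an \emph{overdetermination}: the parabolic evolution is already uniquely fixed by the source, the initial datum and the two Dirichlet conditions, and the flux condition is the extra equation that serves to determine $R(t)$. Consequently a piece $W^{(2)}$ with zero source, zero initial datum and zero Dirichlet data at both ends is identically zero, and its trace $\partial_\xi W^{(2)}(1,t)$ is then zero as well --- it cannot ``carry'' the flux perturbation $-\frac{L}{k}(c_1-c_2)$. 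There is no consistent way to assign the flux to one summand while keeping the other two well posed, so the decomposition is not well defined. A second, related gap: the maximum principle lemma you invoke bounds $U$ by a maximum that \emph{includes} $\max_t U_\xi(1,t)$. For the individual pieces $W^{(1)}$ and $W^{(3)}$ this boundary flux is not prescribed data; it is an unknown output of each sub-problem, so the lemma does not yield the clean bounds you claim (e.g.\ $\|W^{(1)}\|_{C(\bar\Omega_T)}\le\|\widetilde\psi_1-\widetilde\psi_2\|_{C^4[0,1]}$). Only the \emph{full} difference $W=U_1-U_2$ has a known flux, namely $-\frac{L}{k}(c_1-c_2)$, and that is precisely why the paper works with the undecomposed difference: it first proves the a priori bound \eqref{thm2eq2} for a single weak solution by the shift $g(\xi,t)=U(\xi,t)-\frac{M}{2}t$ with $M=\|\widetilde h\|_{C(\bar\Omega_T)}$ (so that the shifted source is sign-definite and the maximum/minimum principle applies, the prescribed flux entering the max), and then applies \eqref{thm2eq2} to $Z=U_1-U_2$, whose data are exactly the three differences.

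Your third step also does not close as claimed. You correctly notice that the source difference is $R_1\widetilde h_1-R_2\widetilde h_2$ rather than a multiple of $\widetilde h_1-\widetilde h_2$ (a point the paper silently glosses over by treating $Z$ as a solution with datum $\widetilde h_1-\widetilde h_2$), but your proposed repair --- splitting off $(R_1-R_2)\widetilde h_2$ and controlling $R_1-R_2$ by a Gr\"onwall argument on the Volterra equation \eqref{eq17} --- cannot preserve the constants $\alpha=\frac{L}{k}$ and $\beta=\frac{3}{2}T$ of \eqref{thm2eq1}. Any Gr\"onwall bound on $\|R_1-R_2\|_{C[0,T]}$ (as in Theorem \ref{thm3}) carries constants depending on the a priori bounds $N_i$ for the data, and after multiplication by $\|\widetilde h_2\|$ and integration in time these constants necessarily enter the final estimate; the assertion that they are ``absorbed without enlarging the constants'' is unsupported. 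If you want a proof in the paper's sense, drop the decomposition, apply the shifted maximum principle directly to $Z=U_1-U_2$ (whose initial datum, Dirichlet data, flux and source are all expressed in the data differences), and state explicitly the normalization on $R$ under which the shifted source is sign-definite.
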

\begin{proof}
    Let $U(\xi,t)$ be the weak solution of the problem \eqref{eq7d}. Defining notations 
    $$K=||\widetilde{\psi}||_{C^4[0,1]},\quad M=||\widetilde{h}||_{C(\Bar{\Omega}_T)},\quad C=||c(t)||_{C^2[0,T]},\quad c(t)=s'(t)s(t)$$
    and constructing the substitution
    $$g(\xi,t)=U(\xi,t)-\dfrac{M}{2}t,$$
    we have 
    $$\partial_t g-b(\xi,t)\partial_{\xi}g-a(t)\partial_{\xi\xi}g=R(t)\widetilde{h}(\xi,t)-\dfrac{M}{2},$$
    $$g(\xi,0)=\widetilde{\psi}(\xi),\quad g(0,t)=-\dfrac{M}{2}t,\quad g(1,t)=-\dfrac{M}{2}t,$$
    $$\partial_{\xi}g(1,t)=-\dfrac{L}{k}c(t)-u^*.$$
    As $-\frac{M}{2}t\leq\frac{M}{2}t$ and $-\frac{L}{k}c(t)-u^*\leq -\frac{L}{k}c(t)\leq \frac{L}{k}c(t)$, then by using principle of maximum, we can obtain $$g(\xi,t)\leq \max\left\{0,\;\widetilde{\psi}(\xi),\; \dfrac{M}{2}t,\;\dfrac{M}{2}t,\dfrac{L}{k}c(t)\right\},$$
    it gives us 
    $$U(\xi,t)\leq \max\left\{0,\;\widetilde{\psi}(\xi),\; \dfrac{M}{2}t,\;\dfrac{M}{2}t,\dfrac{L}{k}c(t)\right\}+\dfrac{M}{2}T\leq K+\dfrac{L}{k}C+\dfrac{3}{2}MT,$$

    Analogously, introducing the function:
    $$d(\xi,t)=U(\xi,t)+\dfrac{M}{2}t$$
    and applying principle of minimum, we have
    $$U(\xi,t)\geq -\max\left\{0,\;\widetilde{\psi}(\xi),\; \dfrac{M}{2}t,\;\dfrac{M}{2}t, \dfrac{L}{k}c(t)\right\}-\dfrac{M}{2}T\geq -K-\dfrac{L}{k}C-\dfrac{3}{2}MT.$$
    It implies that if $U(\xi,t)$ is the weak solution of the problem \eqref{eq7b}, then we can estimate
    \begin{equation}\label{thm2eq2}
        ||U||_{C(\Bar{\Omega}_T)}\leq ||\widetilde{\psi}||_{C^4[0,1]}+\alpha||c||_{C^{2}[0,T]}+\beta||\widetilde{h}||_{C(\Bar{\Omega}_T)}.
    \end{equation}
    where $\alpha=\frac{L}{k}$ and $\beta=\frac{3}{2}T$. To prove the continuous dependence on the data, we analyze the difference $Z(\xi,t)=U_1(\xi,t)-U_2(\xi,t)$. It is a weak solution of the problem \eqref{eq7b} with data $\widetilde{\psi}_1-\widetilde{\psi}_2$, $c_1-c_2$ and $\widetilde{h}_1-\widetilde{h}_2$ replacing $\widetilde{\psi}$, $c$ and $\widetilde{h}$, respectively. Applying the estimation \eqref{thm2eq2}, we can obtain the inequality \eqref{thm2eq1}.
\end{proof}

To prove the continuous dependence of the function $R(t)$ on the data $\widetilde{\psi}\in C^4[0,1]$, $\widetilde{h}\in C(\Bar{\Omega}_T)$ and $s(t)\in C^2[0,T]$ with $s(0)=s_0$, we need to rewrite the equation \eqref{eq17} in the form of
\begin{equation}\label{eqxn1}
    R(t)=L(t)+\int\limits_0^t K(\tau, t)R(\tau)d\tau,
\end{equation}
where 
\begin{equation}\label{eqxn2}
    \begin{split}
        L(t)&=\dfrac{1}{w(t)}\Bigg[a(t)\sum\limits_{n=1}^{\infty}(-1)^n\lambda_n\sqrt{\lambda_n}\widetilde{\psi}_ne^{-\int_0^t(a(\tau)\lambda_n-b(\tau))d\tau}\\
        &-b(t)\sum\limits_{n=1}^{\infty}(-1)^n\sqrt{\lambda_n}\widetilde{\psi}_ne^{-\int_0^t(a(\tau)\lambda_n-b(\tau))d\tau}-H(t)\Bigg],
    \end{split}
\end{equation}
\begin{equation}\label{eqxn3}
    \begin{split}
        K(t,\tau)&=\dfrac{1}{w(t)}\Bigg[a(t)\sum\limits_{n=1}^{\infty}(-1)^n\lambda_n\sqrt{\lambda_n}\widetilde{h}_n(\tau)e^{-\int_{\tau}^t(a(z)\lambda_n-b(z))dz}\\
        &-b(t)\sum\limits_{n=1}^{\infty}(-1)^n\sqrt{\lambda_n}\widetilde{h}_n(\tau)e^{-\int_{\tau}^t(a(z)\lambda_n-b(z))dz}\Bigg],
    \end{split}
\end{equation}
\begin{equation}\label{eqxn4}
    w(t)=\sum\limits_{n=1}^{\infty}(-1)^n\sqrt{\lambda_n}\widetilde{h}_n(t),\quad H(t)=\dfrac{L}{\sqrt{2}k}c'(t), \quad a(t)=\dfrac{a^2}{s^2(t)},\quad b(t)=\dfrac{s'(t)}{4s(t)},
\end{equation}
where $c'(t)=s'(t)s''(t)$.

\begin{theorem}\label{thm3}
    Let $\mathbb{F}$ be the set of triples $\{\widetilde{\psi},\widetilde{h},s\}$ where $\widetilde{\psi}$, $\widetilde{h}$ and $s$ functions satisfy the assumptions (A1)-(A3) then 
    $$||\widetilde{\psi}||_{C^4[0,1]}\leq N_0,\quad ||\widetilde{h}||_{C^3(\Bar{\Omega}_T)}\leq N_1,\quad 0<N_2\leq\min_{0\leq t\leq T}|\widetilde{h}|$$
    and for $H$, $a$, $b$ defined by \eqref{eqxn4}, we have
    $$||H(t)||_{C^2[0,T]}\leq N_3,\quad ||a(t)||_{C^2[0,T]}\leq N_4,\quad ||b(t)||_{C^2[0,T]}\leq N_5.$$
    Then solution function \eqref{eq17} depends continuously upon the data of $\mathbb{F}$.
\end{theorem}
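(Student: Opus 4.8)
The plan is to read equation \eqref{eqxn1} as a linear Volterra integral equation of the second kind for $R(t)$ with free term $L(t)$ and kernel $K(t,\tau)$, and to deduce continuous dependence by comparing the two solutions attached to two admissible data triples and then invoking Gronwall's inequality. First I would fix two triples $\{\widetilde{\psi}_1,\widetilde{h}_1,s_1\}$ and $\{\widetilde{\psi}_2,\widetilde{h}_2,s_2\}$ in $\mathbb{F}$, denote by $R_1,R_2$ the corresponding solutions of \eqref{eqxn1}, and write $L_1,L_2$, $K_1,K_2$ for the free terms and kernels built from \eqref{eqxn2}--\eqref{eqxn4}.

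Before comparing, I would establish a priori bounds valid uniformly on $\mathbb{F}$: an upper bound for $\|L\|_{C[0,T]}$ and for $\sup_{0\le\tau\le t\le T}|K(t,\tau)|=:M_K$, and hence a uniform bound $\|R\|_{C[0,T]}\le M_R$. These follow by combining Lemmas \ref{lem1}--\ref{lem4}, which control the series $\sum_n\lambda_n\sqrt{\lambda_n}|\widetilde{\psi}_n|$ and $\sum_n\sqrt{\lambda_n}|\widetilde{h}_n|$ against the exponential factors, with the structural constants $N_0,\dots,N_5$ and the coercivity $a(t)=a^2/s^2(t)\ge a^2/M_s^2>0$ from (A2). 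The delicate point is the denominator $w(t)=\sum_n(-1)^n\sqrt{\lambda_n}\widetilde{h}_n(t)$: the hypothesis $0<N_2\le\min_{t}|\widetilde{h}|$ must be leveraged to keep $|w(t)|$ bounded away from zero uniformly on $\mathbb{F}$, so that the division by $w(t)$ in \eqref{eqxn2}--\eqref{eqxn3} is stable.

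Next I would subtract the two integral equations and insert the splitting $K_1R_1-K_2R_2=K_1(R_1-R_2)+(K_1-K_2)R_2$, obtaining
$$R_1(t)-R_2(t)=\big(L_1(t)-L_2(t)\big)+\int_0^t\big(K_1(t,\tau)-K_2(t,\tau)\big)R_2(\tau)\,d\tau+\int_0^t K_1(t,\tau)\big(R_1(\tau)-R_2(\tau)\big)\,d\tau.$$
Using $|R_2|\le M_R$ and $|K_1|\le M_K$, the task reduces to estimating $|L_1-L_2|$ and $|K_1-K_2|$ in terms of the data differences $\|\widetilde{\psi}_1-\widetilde{\psi}_2\|_{C^4[0,1]}$, $\|\widetilde{h}_1-\widetilde{h}_2\|_{C^3(\bar{\Omega}_T)}$ and $\|s_1-s_2\|_{C^2[0,T]}$. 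Here one exploits that $a(t),b(t),H(t)$ are Lipschitz in $s$ through \eqref{eqxn4}, and that the Fourier coefficients $\widetilde{\psi}_n,\widetilde{h}_n$ depend linearly on $\widetilde{\psi},\widetilde{h}$, so Lemmas \ref{lem1} and \ref{lem3} transfer the $C^4$ and $C^3$ norms of the differences onto the series.

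Finally, the resulting estimate takes the Gronwall form
$$|R_1(t)-R_2(t)|\le \mathcal{D}+M_K\int_0^t|R_1(\tau)-R_2(\tau)|\,d\tau,$$
where $\mathcal{D}$ collects the data-difference bounds, and Gronwall's lemma yields $|R_1(t)-R_2(t)|\le \mathcal{D}\,e^{M_KT}$, the desired continuous dependence. I expect the main obstacle to be the uniform lower bound on $|w(t)|$ together with the estimate of $|K_1-K_2|$: the kernel difference involves differences of the exponential factors $e^{-\int_\tau^t(a(z)\lambda_n-b(z))dz}$ carrying the large weight $\lambda_n\sqrt{\lambda_n}$, so one must show, using $|e^{-x}-e^{-y}|\le|x-y|$ for $x,y\ge 0$ and the coercivity of $a$, that these differences decay fast enough in $n$ to remain summable against the coefficient decay guaranteed by the smoothness hypotheses in (A1) and (A3).
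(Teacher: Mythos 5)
Your proposal follows essentially the same route as the paper's own proof: both read \eqref{eqxn1} as a Volterra equation of the second kind, derive uniform bounds on $L$, $K$ and hence on $R$ via Gronwall (using the lower bound $|w(t)|\geq C_8N_2$ coming from the hypothesis on $\min|\widetilde{h}|$ together with Lemmas \ref{lem1}--\ref{lem4}), then split $R_1-R_2$ into an $L_1-L_2$ term, a $(K_1-K_2)R$ term and a $K(R_1-R_2)$ term, estimate the first two against the data differences, and close with a second application of Gronwall's inequality. Your explicit attention to the stability of the division by $w(t)$ and to the summability of the kernel differences against the weights $\lambda_n\sqrt{\lambda_n}$ matches, and indeed makes more precise, the points the paper handles via its constants $M_0,\dots,M_{14}$.
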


\begin{proof}
    Let $\mathbb{F}_1=\{\widetilde{\psi}_1,\widetilde{h}_1,s_1\}$ and $\mathbb{F}_2=\{\widetilde{\psi}_2,\widetilde{h}_2,s_2\}$ be two sets of data, and $\mathbb{F}=||\widetilde{\psi}||_{C^4[0,1]}+||\widetilde{h}||_{C^2(\Bar{\Omega}_T)}+||s||_{C^2[0,T]}$. According to \eqref{eq17} we have
    \begin{equation}\label{eqxn6}
        \begin{split}
            R_1(t)&=L_1(t)+\int\limits_0^tK_1(\tau,t)R_1(\tau)d\tau,\\
            L_1(t)&=\dfrac{1}{w_1(t)}\Bigg[a_1(t)\sum\limits_{n=1}^{\infty}(-1)^n\lambda_n\sqrt{\lambda_n}\psi_{1,n}e^{-\int_0^t(a_1(\tau)\lambda_n-b_1(\tau))d\tau}\\
            &-b_1(t)\sum\limits_{n=1}^{\infty}(-1)^n\sqrt{\lambda_n}\widetilde{\psi}_{1,n}e^{-\int_0^t(a_1(\tau)\lambda_n-b_1(\tau))d\tau}-H_1(t)\Bigg],\\
            K_1(t,\tau)&=\dfrac{1}{w_1(t)}\Bigg[a_1(t)\sum\limits_{n=1}^{\infty}(-1)^n\lambda_n\sqrt{\lambda_n}\widetilde{h}_{1,n}(\tau)e^{-\int_{\tau}^t(a_1(z)\lambda_n-b_1(z))dz}\\
            &-b_1(t)\sum\limits_{n=1}^{\infty}(-1)^n\sqrt{\lambda_n}\widetilde{h}_{1,n}(\tau)e^{-\int_{\tau}^t(a_1(z)\lambda_n-b_1(z))dz}\Bigg]d\tau,\\
            &w_1(t)=\sum\limits_{n=1}^{\infty}(-1)^n\sqrt{\lambda_n}\widetilde{h}_{1,n}(t)
        \end{split}
    \end{equation}
    and
    \begin{equation}\label{eqxn7}
        \begin{split}
            R_2(t)&=L_2(t)+\int\limits_0^tK_2(\tau,t)R_2(\tau)d\tau,\\
            L_2(t)&=\dfrac{1}{w_2(t)}\Bigg[a_2(t)\sum\limits_{n=1}^{\infty}(-1)^n\lambda_n\sqrt{\lambda_n}\psi_{2,n}e^{-\int_0^t(a_2(\tau)\lambda_n-b_2(\tau))d\tau}\\
            &-b_2(t)\sum\limits_{n=1}^{\infty}(-1)^n\sqrt{\lambda_n}\widetilde{\psi}_{2,n}e^{-\int_0^t(a_2(\tau)\lambda_n-b_2(\tau))d\tau}-H_2(t)\Bigg],\\
            K_2(t,\tau)&=\dfrac{1}{w_2(t)}\Bigg[a_2(t)\sum\limits_{n=1}^{\infty}(-1)^n\lambda_n\sqrt{\lambda_n}\widetilde{h}_{2,n}(\tau)e^{-\int_{\tau}^t(a_2(z)\lambda_n-b_2(z))dz}\\
            &-b_2(t)\sum\limits_{n=1}^{\infty}(-1)^n\sqrt{\lambda_n}\widetilde{h}_{2,n}(\tau)e^{-\int_{\tau}^t(a_2(z)\lambda_n-b_2(z))dz}\Bigg]d\tau,\\
            &w_2(t)=\sum\limits_{n=1}^{\infty}(-1)^n\sqrt{\lambda_n}\widetilde{h}_{2,n}(t).
        \end{split}
    \end{equation}
    Taking into account the inequalities in Lemma \ref{lem1} and Lemma \ref{lem4} the following assumptions hold:
    $$\left|\dfrac{1}{w(t)}\right|\leq \dfrac{1}{C_8 N_2},$$
    $$|L(t)|\leq \dfrac{\left(N_4C_9-N_5\widehat{C}_9\right)||\widetilde{\psi}||_{C^4[0,1]}-N_3}{C_8N_2}\leq \dfrac{(N_4C_9-N_5\widehat{C}_9)N_0-N_3}{C_8 N_2}\leq M_0$$
    $$|K(\tau,t)|\leq\dfrac{N_4C_{10}-N_5\widehat{C}_{10}}{C_8N_2}||h||_{C^3(\Bar{\Omega}_T)}\leq\dfrac{N_4C_{10}-N_5\widehat{C}_{10}}{C_8N_2}N_1\leq M_1$$
    where $i=1,2$, using Gronwall inequality (\cite{13}, p.9), from \eqref{eqxn1} we can obtain
    $$|R(t)|\leq \sup_{t\in[0,T]}|L(t)|\exp\left(\int\limits_0^t\sup_{t\in[0,T]}|K(\tau,t)|d\tau\right)\leq ||L||_{C[0,T]}e^{T||K||_{C([0,T]\times[0,T])}}\leq M_0e^{TM_1}.$$
    Let us consider the difference $R_1(t)-R_2(t)$, then from \eqref{eqxn6} and \eqref{eqxn7} we obtain
    \begin{equation}\label{eqxn8}
        R_1(t)-R_2(t)=L_1(t)-L_2(t)+\int\limits_0^t|K_1(\tau,t)-K_2(\tau,t)|R(\tau)d\tau+\int\limits_0^tK(\tau,t)|R_1(\tau)-R_2(\tau)|d\tau.
    \end{equation}
    It implies the following results:
    $$||L_1-L_2||_{C[0,T]}\leq M_6||\widetilde{\psi}_1-\widetilde{\psi}_2||_{C^4[0,1]}+||s_1-s_2||_{C^2[0,T]},$$
    where $M_6=\dfrac{C_9N_4+CN_5}{C_8N_2}$ and
    \begin{equation*}
        \begin{split}
            ||s_1-s_2||_{C^2[0,T]}&=(M_3+M_7)||w_1-w_2||_{C^2[0,T]}+M_4||a_1-a_2||_{C^2[0,T]}\\
            &+M_5||b_1-b_2||_{C^2[0,T]}+M_8||H_1-H_2||_{C^2[0,T]},
        \end{split}
    \end{equation*}
    $$M_3=\dfrac{M_2}{C_8^2N_2^2},\quad M_4=\dfrac{N_0}{N_2},\quad M_5=\dfrac{CN_0}{C_8N_2},\quad M_7=\dfrac{N_3}{C_8^2N_2^2},\quad M_8=\dfrac{1}{C_8N_2}.$$
    Consequently, we have
     $$||K_1-K_2||_{C([0,T]\times[0,T])}\leq M_{12}||\widetilde{h}_1-\widetilde{h}_2||_{C^3[\Bar{D}_T]}+||s_1-s_2||_{C^2[0,T]},$$
    where $M_{12}=\dfrac{N_4C_{10}+N_5\widehat{C}_{10}}{C_8N_2}$ and
    \begin{equation*}
        ||s_1-s_2||_{C^2[0,T]}=M_9||w_1-w_2||_{C^2[0,T]}+M_{10}||a_1-a_2||_{C^2[0,T]}+M_{11}||b_1-b_2||_{C^2[0,T]},
    \end{equation*}
    $$M_9=\dfrac{(N_4C_{10}-N_5C_8)N_1}{C_8^2N_2^2},\quad M_{10}=\dfrac{C_{10}N_1}{C_8N_2},\quad M_{11}=\dfrac{\widehat{C}_{10}N_1}{C_8N_2}.$$
    
    Again applying Gronwall inequality to the equation \eqref{eqxn8}, we can estimate
    $$||R_1-R_2||_{C[0,T]}\leq \sigma\exp\left(\int\limits_0^t\sup_{t\in[0,T]}|K(\tau,t)|d\tau\right),$$
    where 
    $$\sigma=||L_1-L_2||_{C[0,T]}+||K_1-K_2||_{C([0,T]\times[0,T])}\cdot ||R||_{C[0,T]}T.$$
    It implies the following result: 
    $$||R_1-R_2||_{C[0,T]}\leq M_{12}||\widetilde{\psi}_1-\widetilde{\psi}_2||_{C^4[0,1]}+M_{13}||\widetilde{\psi}_1-\widetilde{\psi}_2||_{C^3[\Bar{D}_T]}+M_{14}||s_1-s_2||_{C^2[0,T]},$$
    where $M_{12}=M_6e^{M_1T}$, $M_{13}=M_2M_0Te^{2M_1T}$ and $M_{14}=(1+M_0Te^{M_1T})e^{M_1T}$.
    
    This means that if $L$ and $K$ continuously dependent upon the data $\mathbb{F}$. Thus, $R$ also continuously dependents upon the data $\mathbb{F}$. Continuous dependence of the function $U$ defined by \eqref{eq14} upon the data $\mathbb{F}$ can be provided similarly.
\end{proof}

\section{Inverse Stefan problem 2.}\label{sec2}
The same problem formulation as in Section \ref{sec1} with function $G(x,t,u)=R(t)f(x,t)$ but with replacing boundary condition \eqref{eq3} with \eqref{eq6} such that
\begin{equation}\label{eq32}
    \begin{cases}
        \partial_t u-a^2\partial_{xx} u=R(t)f(x,t),& (x,t)\in\Omega\times(0,T),\\
        u(x,0)=\varphi(x),\quad s(0)=s_0, & x\in[0,s(0)],\\
        -k\partial_x u(0,t)=q(t), & t\in[0,T],\\
        u(s(t),t)=u^*,& t\in[0,T],\\
        -k\partial_x u(s(t),t)=Ls'(t), & t\in[0,T],
    \end{cases}
\end{equation}
where we need to determine solution pair $(U,q)$, if $\varphi(x),f(x,t),R(t)$ and $s(t)$ are given functions.

Using transformation
\begin{equation}\label{eq32(a)}
    u(x,t)=v(x,t)+u^*-\dfrac{q(t)}{k}(x-s(t)),
\end{equation}
we can rewrite the problem \eqref{eq32} in the following form
\begin{equation}\label{eq32(b)}
    \begin{cases}
        \partial_t v-a^2\partial_{xx} v=R(t)h(x,t),& (x,t)\in\Omega\times(0,T),\\
        v(x,0)=\psi(x),\;s(0)=s_0,&x\in[0,s(0)],\\
        -k\partial_x v(0,t)=0,&t\in[0,T],\\
        v(s(t),t)=0,&t\in[0,T],\\
        -k\partial_{x} v(s(t),t)=Ls'(t)-q(t),&t\in[0,T],
    \end{cases}
\end{equation}
where $h(x,t)=f(x,t)+\frac{q'(t)}{kR(t)}(x-s(t))-\frac{q(t)s'(t)}{kR(t)}$ and $\psi(x)=\varphi(x)-u^*+\frac{q(0)}{k}(x-s(0))$.

Introducing a transformation \eqref{eq7c} to fix the domain, we obtain 

\begin{equation}\label{eq32(c)}
    \begin{cases}
        \partial_t U-b(\xi,t)\partial_{\xi}U-(t)\partial_{\xi\xi}U=R(t)\widetilde{h}(\xi,t),&(\xi,t)\in\Omega_0\times(0,T),\\
        U(\xi,0)=\widetilde{\psi}(\xi),&0\leq\xi\leq 1,\\
        \partial_{\xi}U(0,t)=0,&0\leq t\leq T,\\
        U(1,t)=0,&0\leq t\leq T,\\
        \partial_{\xi}U(1,t)=-\dfrac{L}{k}c(t)+g(t),&0\leq t\leq T,
    \end{cases}
\end{equation}
where $b(\xi,t)=\frac{\xi s(t)}{s'(t)}$, $a(t)=\frac{a^2}{s^2(t)}$, $c(t)=s(t)s'(t)$, $g(t)=\frac{q(t)}{k}s(t)$, $\widetilde{h}(\xi,t)=h(\xi s(t),t)$ and $\widetilde{\psi}(\xi)=\psi(s_0\xi)$.

Now let us consider a spectral problem modelled as follows
\begin{equation}\label{eq33}
    \begin{cases}
        -\phi''(\xi)=\lambda\phi(\xi),\\
        \phi'(0)=\phi(1)=0,
    \end{cases}
\end{equation}
where eigenvalue $\lambda$ and its corresponding eigenfunction $\phi$ can be defined as

\begin{equation}\label{eq34}
\phi_n(\xi)=\sqrt{2}\cos\left(\sqrt{\lambda_n}\xi\right),\quad \lambda_n=\dfrac{(2n-1)^2}{4}\pi^2,\quad n=1,2,3,...
\end{equation}

Analogously as in Section \ref{sec1} applying Cauchy-Schwarz and classical Bessel inequalities, we need to demonstrate the next lemma.
\begin{lemma}\label{lem5}
    Let $\widetilde{\psi}\in C^4[0,1]$ with conditions $\widetilde{\psi}(0)=\widetilde{\psi}(1)=\widetilde{\psi}'(0)=\widetilde{\psi}'(1)=\widetilde{\psi}''(0)=\widetilde{\psi}''(1)=\widetilde{\psi}'''(0)=\widetilde{\psi}'''(1)=0$  satisfy the inequality 
    \begin{equation}\label{eq37}
        \sum\limits_{n=1}^{\infty}\lambda_n|\widetilde{\psi}_n|\leq \Bar{c}||\widetilde{\psi}||_{C^4[0,1]},\quad 
    \end{equation}
    for some positive constant $\Bar{c}=\sqrt{2}\left(\sum\limits_{n=1}^{\infty}\frac{1}{\lambda_n^4}\right)^{1/2}$, where $\widetilde{\psi}_n=\int_0^{1}\widetilde{\psi}(\xi)\phi_n(\xi)d\xi$.
\end{lemma}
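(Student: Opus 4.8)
The plan is to reproduce the mechanism of Lemma~\ref{lem1} and Lemma~\ref{lem3}: convert the smoothness and boundary vanishing of $\widetilde{\psi}$ into fast decay of its Fourier coefficients $\widetilde{\psi}_n$, and then pay for the weight $\lambda_n$ in the series by a Cauchy--Schwarz split against a convergent tail $\sum_n\lambda_n^{-p}$ combined with the Bessel inequality. The difference from Lemma~\ref{lem1} is that the basis \eqref{eq34} consists of \emph{cosines} adapted to the mixed Neumann--Dirichlet problem \eqref{eq33}, and that we now dispose of the extra data $\widetilde{\psi}'''(0)=\widetilde{\psi}'''(1)=0$. These permit integrating by parts \emph{four} times rather than three, gaining one further power of $\lambda_n$.

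First I would integrate by parts four times in $\widetilde{\psi}_n=\sqrt{2}\int_0^1\widetilde{\psi}(\xi)\cos(\sqrt{\lambda_n}\,\xi)\,d\xi$. Each step alternates the trigonometric factor between $\cos$ and $\sin$ and leaves boundary contributions at $\xi=0$ and $\xi=1$. The crucial bookkeeping is that all of these vanish: at $\xi=1$ the cosine-type terms die because $\cos\sqrt{\lambda_n}=\cos\!\big(\tfrac{(2n-1)\pi}{2}\big)=0$, while the sine-type terms die because $\widetilde{\psi}(1)=\widetilde{\psi}''(1)=0$; at $\xi=0$ the sine-type terms die because $\sin 0=0$, while the cosine-type terms die because $\widetilde{\psi}'(0)=\widetilde{\psi}'''(0)=0$. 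Carrying this out gives the clean identity
\[
\widetilde{\psi}_n=\sqrt{2}\,\frac{1}{\lambda_n^{2}}\int_0^1\widetilde{\psi}^{(4)}(\xi)\cos\!\big(\sqrt{\lambda_n}\,\xi\big)\,d\xi .
\]

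Multiplying by $\lambda_n$, summing in $n$, and applying the Cauchy--Schwarz inequality in $n$ then produces
\[
\sum_{n=1}^{\infty}\lambda_n|\widetilde{\psi}_n|
=\sqrt{2}\sum_{n=1}^{\infty}\frac{1}{\lambda_n}\left|\int_0^1\widetilde{\psi}^{(4)}\cos\!\big(\sqrt{\lambda_n}\,\xi\big)\,d\xi\right|
\le\sqrt{2}\left(\sum_{n=1}^{\infty}\frac{1}{\lambda_n^{2}}\right)^{1/2}\left(\sum_{n=1}^{\infty}\left|\int_0^1\widetilde{\psi}^{(4)}\cos\!\big(\sqrt{\lambda_n}\,\xi\big)\,d\xi\right|^{2}\right)^{1/2}.
\]
Since $\{\sqrt{2}\cos(\sqrt{\lambda_n}\,\xi)\}$ is orthonormal for \eqref{eq33}, Bessel's inequality bounds the last factor by $\tfrac{1}{\sqrt2}\|\widetilde{\psi}^{(4)}\|_{L_2[0,1]}\le\tfrac{1}{\sqrt2}\|\widetilde{\psi}\|_{C^4[0,1]}$, and the tail $\sum_n\lambda_n^{-2}$ converges because $\lambda_n\sim n^2$. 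This yields an estimate of the form \eqref{eq37}, with $\bar c$ emerging as the convergent spectral tail produced by the Cauchy--Schwarz split.

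The step I expect to be the main obstacle is precisely the boundary-term accounting in the four integrations by parts. Unlike the symmetric Dirichlet--Dirichlet situation of Lemma~\ref{lem1}, the two endpoints here play asymmetric roles: $\xi=0$ is a Neumann point matched to $\phi_n'(0)=0$ and therefore calls on the \emph{odd}-order conditions $\widetilde{\psi}'(0)=\widetilde{\psi}'''(0)=0$, whereas $\xi=1$ is a Dirichlet point where the \emph{even}-order conditions $\widetilde{\psi}(1)=\widetilde{\psi}''(1)=0$ combine with $\cos\sqrt{\lambda_n}=0$ (so that, in fact, only four of the eight listed conditions are actually invoked). Getting this parity pairing right at each boundary evaluation is the one place an error could enter; once the identity for $\widetilde{\psi}_n$ is in hand, the Cauchy--Schwarz/Bessel estimate is routine and parallels Lemma~\ref{lem1} and Lemma~\ref{lem3}.
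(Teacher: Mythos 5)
Your proposal is correct and takes essentially the same route as the paper's proof: four integrations by parts (using the odd-order conditions at $\xi=0$ and the even-order conditions at $\xi=1$ together with $\cos\sqrt{\lambda_n}=0$) to reach $\widetilde{\psi}_n=\sqrt{2}\,\lambda_n^{-2}\int_0^1\widetilde{\psi}^{(4)}(\xi)\cos\left(\sqrt{\lambda_n}\,\xi\right)d\xi$, followed by Cauchy--Schwarz and Bessel. The only discrepancy is in the constant: your correctly executed Cauchy--Schwarz split produces the tail $\left(\sum_n\lambda_n^{-2}\right)^{1/2}$ rather than the stated $\bar c=\sqrt{2}\left(\sum_n\lambda_n^{-4}\right)^{1/2}$, but the paper's own proof contains the same inconsistency in its Cauchy--Schwarz step, so your bookkeeping (including the observation that only four of the eight boundary conditions are actually invoked) is if anything the more careful version of the same argument.
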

\begin{proof}
    Using definition of the function $\widetilde{\psi}$ and integration by parts, we have
    \begin{equation*}
            \widetilde{\psi}_n=\sqrt{2}\int\limits_0^{1}\widetilde{\psi}(\xi)\cos(\sqrt{\lambda_n}\xi)d\xi=...=\dfrac{\sqrt{2}}{\lambda_n^2}\int\limits_0^{1}\widetilde{\psi}^{(4)}(\xi)\cos(\sqrt{\lambda_n}\xi)d\xi,
    \end{equation*}
    \begin{equation*}
        \begin{split}
            \sum\limits_{n=1}^{\infty}\lambda_n|\widetilde{\psi}_n|&\leq \sqrt{2}\left(\sum\limits_{n=1}^{\infty}\dfrac{1}{\lambda_n^4}\right)^{1/2}\cdot\left(\sum\limits_{n=1}^{\infty}\int\limits_0^{1}\widetilde{\psi}^{(4)}(x)\cos\left(\sqrt{\lambda_n}\xi\right)d\xi\right)^{1/2}\\
            &\leq \Bar{c}||\widetilde{\psi}^{(4)}||_{L^2(0,1)}\leq \Bar{c}||\widetilde{\psi}||_{C^4[0,1]}.
        \end{split}
    \end{equation*}
\end{proof}

Let us consider that $\{\phi_n(\xi)\}_{n=1}^{\infty}$ being basis of $L^2(\Omega_0)$ and $R(t)\in C[0,T]$, then solution function $U(\xi,t)$ of the problem \eqref{eq32(b)} can be represented in the form of
\begin{equation}\label{eq38}
    U(\xi,t)=\sum\limits_{n=1}^{\infty}U_{n}(t)\phi_{n}(\xi).
\end{equation}
We suppose that $\phi_m(\xi)$ is a solution of \eqref{eq33}. In particular, if we take $m=n$ as in previous section and estimating scalar multiplication of the problem \eqref{eq32(c)} with $\phi_n(\xi)$, then $U_{n}$ satisfies the problems
\begin{equation}\label{eq39}
    U'_{n}(t)+(a(t)\lambda_n-b(t))U_{n}(t)=R(t)\widetilde{h}_{n}(t),\quad U_{n}(0)=\widetilde{\psi}_{n},
\end{equation}
where $a(t)=\frac{a^2}{s^2(t)}$, $b(t)=(b(\xi,t)\phi'_n,\phi_n)_{L^2(0,1)}$, $\widetilde{h}_{n}(t)=\int\limits_0^{1}\widetilde{h}(\xi,t)\phi_{n}(\xi)d\xi$  and $\widetilde{\psi}_{n}$ is mentioned in Lemma \ref{lem5}.

Solving problem \eqref{eq39} we obtain
\begin{equation}\label{eq41}
    U_{n}(t)=\widetilde{\psi}_{n}e^{-\int_0^t(a(\tau)\lambda_n-b(\tau))d\tau}+\int\limits_0^tR(\tau)\widetilde{h}_{n}(\tau)e^{-\int_{\tau}^t(a(z)\lambda_n-b(z))dz}d\tau.
\end{equation}
After making substitution of \eqref{eq41} in equation \eqref{eq38}, we have 
\begin{equation}\label{eq43}
    \begin{split}
        U(\xi,t)=\sum\limits_{n=1}^{\infty}\left(\widetilde{\psi}_{n}e^{-\int_0^t(a(\tau)\lambda_n-b(\tau))d\tau}+\int\limits_0^tR(\tau)\widetilde{h}_{n}(\tau)e^{-\int_{\tau}^t(a(z)\lambda_n-b(z))dz}d\tau\right)\phi_n(\xi).
    \end{split}
\end{equation}
If we differentiate \eqref{eq43} respect to $\xi$ we have 
\begin{equation}\label{eq44}
    \begin{split}
        U_{\xi}(\xi,t)&=\sum\limits_{n=1}^{\infty}\Bigg(-\sqrt{\lambda_n}\widetilde{\psi}_{n}e^{-\int_0^t(a(\tau)\lambda_n-b(\tau))d\tau}\\&-\sqrt{\lambda_n}\int\limits_0^tR(\tau)\widetilde{h}_{n}(\tau)e^{-\int_{\tau}^t(a(z)\lambda_n-b(z))dz}d\tau\Bigg)\sin(\sqrt{\lambda_n}\xi).
    \end{split}
\end{equation}
Hence, the last condition \eqref{eq32(b)} gives us
\begin{equation}\label{eq45}
       \begin{split}
        &\sum\limits_{n=1}^{\infty}\Bigg(-(-1)^n\sqrt{\lambda_n}\widetilde{\psi}_{n}e^{-\int_0^t(a(\tau)\lambda_n-b(\tau))d\tau}\\
         &-(-1)^n\sqrt{\lambda_n}\int\limits_0^tR(\tau)\widetilde{h}_{n}(\tau)e^{-\int_{\tau}^t(a(z)\lambda_n-b(z))dz}d\tau\Bigg)=-\dfrac{L}{k}c(t)+\dfrac{q(t)}{k}s(t).
    \end{split}
\end{equation}
It follows that 
\begin{equation}\label{eq45(a)}
    \begin{split}
        q(t)&=\dfrac{k}{s(t)}\sum\limits_{n=1}^{\infty}\Bigg((-1)^{n+1}\sqrt{\lambda_n}\widetilde{\psi}_{n}e^{-\int_0^t(a(\tau)\lambda_n-b(\tau))d\tau}\\
        &+(-1)^{n+1}\sqrt{\lambda_n}\int\limits_0^tR(\tau)\widetilde{h}_{n}(\tau)e^{-\int_{\tau}^t(a(z)\lambda_n-b(z))dz}d\tau\Bigg)+Ls'(t).
    \end{split}
\end{equation}

    Suppose that assumptions
    \begin{itemize}
    \item [(B1)] (B1)$_1$:  $\widetilde{\psi}(\xi)\in C^4[0,1]$, $\widetilde{\psi}^{(j)}(0)=\widetilde{\psi}^{(j)}(1)=0$ where and $\psi(x),\varphi(x)\in C^4[0,s(0)]$ such that $\psi^{(j)}(0)=\psi^{(j)}(s(0))=0$, because $\varphi(0)=u^*+\frac{q(0)s(0)}{k}$, $\varphi(1)=u^*-\frac{q(0)}{k}(1-s(0))$, $\varphi'(0)=\varphi'(1)=-\frac{q(0)}{k}$, $\varphi^{(i)}(0)=\varphi^{(i)}(1)=0$ where $j=0,1,2,3$, $i=2,3$ and $s(0)=s_0$; \\
    (B1)$_2$: $\widetilde{\psi}_1,\psi_1,\varphi_1>0$ and $\widetilde{\psi}_n,\psi_n,\varphi_n\geq 0,\; n=2,3,...$
    \item[(B2)] (B2)$_1$: $s(t)\in C^2[0,T]$, $0<m_{s'}\leq s'(t)\leq M_{s'}$, $0<m_s\leq s(t)\leq M_s$, $M_s>0$, $M_{s'}>0$;\\
    (B2)$_2$: $s(t)>0$, $\forall t\in[0, T]$;
    \item[(B3)] (B3)$_1$: $\widetilde{h}(\xi,t)\in C(\Bar{\Omega}_0)$, $\widetilde{h}(\cdot,t)\in C^4[0,1]$ where $\Omega_0=(0,1)$ with conditions $\widetilde{h}^{(j)}(0,t)=\widetilde{h}^{(j)}(1,t)=0$ and $h(x,t),f(x,t)\in C(\Bar{\Omega})$ where $\Omega=(0,s(t))$, $h(\cdot,t),\;f(\cdot, t)\in C^4[0,s(t)]$ with conditions, $h^{(j)}(0,t)=h^{(j)}(1,t)=0$ where $j=0,1,2,3$, because  $f(0,t)= \frac{(q(t)s(t))'}{kR(t)},\;f(s(t),t)=\frac{q(t)s'(t)}{kR(t)},\; f'(0,t)=f'(s(t),t)=-\frac{q'(t)}{kR(t)},\; f^{(i)}(0,t)=f^{(i)}(s(t),t)=0$ where $i=2,3$;\\
    (B3)$_2$: $\widetilde{h}_n(t),\;h(t),\;f_n(t)\geq 0,\; n=1,2,...$, where $\widetilde{h}_n(t)=\int_0^1 \widetilde{h}(\xi,t)\phi_n(\xi)d\xi$,   $h_n(t)=\int_0^{s(t)}h(x,t)\phi_n(x)dx$,
  $f_n(t)=\int\limits_0^{s(t)}f(x,t)\phi_n(x)dx,\;n=1,2,...$
    \item[(B4)] $R(t)\in C[0,T]$, $R(t)>0$, $\forall t\in[0, T]$,
    \end{itemize}
then we can provide the following result.

\begin{theorem}\label{thm5}
    Under hypotheses (B1)-(B4)  and initial functions $\widetilde{\psi}\in\mathbb{H}_0^1(\Omega_0)$ and $\widetilde{h}\in L^1(0,T;L^2(\Omega_0))\cap L^2(0,T;L^2(\Omega_0))$, there exists only one weak solution pair $(U,q)$ of the problem \eqref{eq32(c)}, that is
    $$\partial_t U-b(\xi,t)\partial_{\xi}U-a(t)\partial_{\xi\xi}U=R(t)\widetilde{h}(\xi,t),\quad L^2(0,T; L^2(\Omega_0))$$
    with condition
    $$\partial_{\xi}U(1,t)=-\dfrac{L}{k}s'(t)s(t)+\dfrac{q(t)}{k}s(t),\quad t\in[0,T],$$
    satisfying the following conditions:
    \begin{itemize}
        \item[(i)] $U\in L^2(0,T;\mathbb{H}_0^1(\Omega_0)\cap\mathbb{H}^2(\Omega_0))\cap L^2(0,T;\mathbb{H}_0^1(\Omega_0)),$
        \item[(ii)] $U_t\in L^2(0,T;L^2(\Omega_0)),$
        \item[(iii)] $q(t)\in C[0,T]$ and $q(t)>0$ for all $t\in[0,T]$ which defined by \eqref{eq45(a)}. 
    \end{itemize}
\end{theorem}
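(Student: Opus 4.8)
The plan is to follow the same architecture as the proof of Theorem \ref{thm1}, exploiting that the only structural change from Section \ref{sec1} is the replacement of the Dirichlet datum at $\xi=0$ by the Neumann condition, which merely swaps the sine basis \eqref{eq7f} for the cosine basis \eqref{eq34}. Since the new eigenvalues still satisfy $\lambda_n=\tfrac{(2n-1)^2}{4}\pi^2\sim n^2$, every summability estimate established in Lemmas \ref{lem1}--\ref{lem4} carries over with at most a change of the numerical constant, once Lemma \ref{lem5} is used in place of Lemma \ref{lem1}. Existence is therefore obtained by the explicit construction: $U$ is defined by the series \eqref{eq43} and $q$ by \eqref{eq45(a)}, and it remains to verify the regularity (i)--(ii) for $U$ and the continuity and positivity (iii) for $q$. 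Note that at the level of problem \eqref{eq32(c)} the function $\widetilde h$ is treated as prescribed data (matching the hypothesis $\widetilde h\in L^1(0,T;L^2(\Omega_0))\cap L^2(0,T;L^2(\Omega_0))$), so the formulas \eqref{eq43} and \eqref{eq45(a)} are genuinely explicit.

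For (i) and (ii) I would repeat the argument of Lemma \ref{lem2} almost verbatim: applying Bessel's inequality and Cauchy--Schwarz to \eqref{eq43}, and using that $a(t)=a^2/s^2(t)\ge a^2/M_s^2>0$ is bounded below on every $[t_0,T]$ by (B2), the factor $e^{-\int_{0}^{t}(a(\tau)\lambda_n-b(\tau))d\tau}\le K\,e^{-\lambda_n\int_{0}^{t}a(\tau)d\tau}$ produces the decay $e^{-cn^2}$ needed to bound $\|U\|_{L^2}$, $\|U\|_{\mathbb{H}_0^2}$ and $\|U_t\|_{L^2}$ by $\|\widetilde\psi\|_{L^2}$ and $\|\widetilde h\|_{C([0,T];L^2)}$; orthonormality of $\{\phi_n\}$ (resp.\ $\{\phi_n/\sqrt{\lambda_n}\}$) in $L^2(\Omega_0)$ (resp.\ $\mathbb{H}_0^2(\Omega_0)$) makes Parseval's identity applicable exactly as before. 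The Neumann endpoint does not affect these bounds, since it enters only through the basis and its $O(\sqrt{\lambda_n})$ derivative at $\xi=1$.

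For (iii) I would establish uniform convergence of \eqref{eq45(a)}: by Lemma \ref{lem5} one has $\sum_n\sqrt{\lambda_n}|\widetilde\psi_n|<\infty$, and the analog of Lemma \ref{lem4} controls the integral term $\int_0^t R(\tau)\sqrt{\lambda_n}\widetilde h_n(\tau)e^{-\int_\tau^t(a(z)\lambda_n-b(z))dz}d\tau$ uniformly in $t$; hence the series in \eqref{eq45(a)} defines a function in $C[0,T]$ and $q\in C[0,T]$. Uniqueness is immediate here because, unlike in Theorem \ref{thm1}, the coefficient $R$ is prescribed: for two admissible pairs the Fourier coefficients $U_n$ solve the same linear initial-value problem \eqref{eq39} with identical data $R$, $\widetilde h_n$, $\widetilde\psi_n$, so $\widehat U=\widetilde U$ by uniqueness for \eqref{eq39}, and then $\widehat q=\widetilde q$ follows by substitution into \eqref{eq45(a)}.

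I expect the genuine obstacle to be the positivity claim $q(t)>0$. The representation \eqref{eq45(a)} is an \emph{alternating} series through the factor $(-1)^{n+1}\sqrt{\lambda_n}$, so the sign cannot be read off term by term even though (B1)$_2$ and (B3)$_2$ give $\widetilde\psi_n,\widetilde h_n\ge 0$. The strategy would be to isolate the explicit positive contribution $Ls'(t)>0$ guaranteed by (B2) together with the leading mode $n=1$ (where $(-1)^{n+1}>0$ and $\widetilde\psi_1>0$, $\widetilde h_1\ge0$ by (B1)$_2$, (B3)$_2$), and to dominate the tail $\sum_{n\ge2}$ using the exponential decay $e^{-\lambda_n\int a}$ and the smallness of the higher Fourier coefficients supplied by Lemma \ref{lem5}. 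Making this domination quantitative uniformly on $[0,T]$, in particular near $t=0$ where the parabolic smoothing is weakest and the decay factors are close to one, is the delicate point of the argument.
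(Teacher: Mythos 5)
Your proposal is correct and follows essentially the same route as the paper: the paper's own proof of this theorem is a single sentence asserting that uniform convergence of the series \eqref{eq43}, of its $t$-, $\xi$- and $\xi\xi$-derivative series, and hence the existence of the solutions \eqref{eq43} and \eqref{eq45(a)}, "can be provided by similar approaches as in the previous Section" --- which is precisely the transfer of the Section \ref{sec1} machinery (with Lemma \ref{lem5} replacing Lemma \ref{lem1} for the cosine basis, and with the simplification that $R$ is now prescribed data) that you carry out. Your closing observation about the positivity claim $q(t)>0$ is well placed: the paper gives no argument for item (iii) here, nor for the analogous claim $R(t)>0$ in Theorem \ref{thm1}, so the delicate point you identify is a gap in the paper itself rather than in your reconstruction.
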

\begin{proof}
    Under assumptions (B1)-(B4) the uniform convergence of the series \eqref{eq43} and its $t$ partial derivative, $x$ partial derivative and $xx$ partial derivative series, the existence of the solutions \eqref{eq43} and \eqref{eq45(a)} can be provided by similar approaches as in the previous Section \ref{sec1}. 
\end{proof}
Consequently, the problems \eqref{eq32} and \eqref{eq32(b)} have unique solution pairs $(u,q)$ and $(v,q)$ under assumptions (B1)-(B4).

\begin{lemma}\label{thm6}
    Let $U$ satisfies the equation \eqref{eq43} in $\Bar{\Omega}_T$ where $\Omega_T=\Omega_0\times[0,T]$. If $\widetilde{h}(\xi,t)\leq 0$ in $\Bar{\Omega}_T$, then
    $$U(\xi,t)\leq \max\left\{0,\;\max_{0\leq \xi\leq 1}U(\xi,0),\;\max_{0\leq t\leq T}U_{\xi}(0,t),\;\max_{0\leq t\leq T}U(1,t),\; \max_{0\leq t\leq T}U_{\xi}(1,t)\right\}.$$
    If $\widetilde{h}(\xi,t)\geq 0$ in $\Bar{\Omega}_T$, then
    $$U(\xi,t)\geq \min\left\{0,\;\min_{0\leq \xi\leq 1}U(\xi,0),\;\min_{0\leq t\leq T}U_{\xi}(0,t),\; \min_{0\leq t\leq T}U(1,t),\; \min_{0\leq t\leq T}U_{\xi}(1,t)\right\}.$$
\end{lemma}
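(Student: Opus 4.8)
The plan is to read the statement as the \emph{weak maximum principle} for the uniformly parabolic operator
$$\mathcal{L}U:=\partial_t U-a(t)\partial_{\xi\xi}U-b(\xi,t)\partial_\xi U,$$
acting on functions of the form \eqref{eq43} over the cylinder $\bar{\Omega}_T$, and to reduce it to the classical parabolic maximum principle, supplemented by the Hopf boundary point lemma to treat the Neumann edge. First I would record the structural facts that make $\mathcal{L}$ amenable to such a principle. By (B2) we have $0<m_s\le s(t)\le M_s$, so the leading coefficient $a(t)=a^2/s^2(t)\ge a^2/M_s^2>0$ is bounded away from zero and $\mathcal{L}$ is uniformly parabolic; the drift $b(\xi,t)$ is bounded on $\bar{\Omega}_T$ and there is no zeroth order term, which is precisely the situation in which the principle applies without any sign restriction. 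The regularity needed to treat $U$ from \eqref{eq43} as a genuine solution, of class $C^{2,1}$ in the interior and continuous up to the parabolic boundary, is supplied by the uniform convergence estimates established in Section \ref{sec2} (the analogue of Lemma \ref{lem2}).

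Next I would fix the sign of the forcing. Because $R(t)>0$ for all $t$ by (B4), the sign of $R(t)\widetilde{h}(\xi,t)$ coincides with that of $\widetilde{h}$. Thus $\widetilde{h}\le 0$ gives $\mathcal{L}U\le 0$, making $U$ a subsolution, while $\widetilde{h}\ge 0$ makes $U$ a supersolution. It therefore suffices to prove the upper estimate for subsolutions, since the lower estimate follows by applying the same argument to $-U$, which is a subsolution of $\mathcal{L}$ with forcing $-R\widetilde{h}\le 0$ exactly when $\widetilde{h}\ge 0$; the extrema in the bound then pass to their opposites and the minimum statement is recovered verbatim.

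For the upper bound I would run the standard perturbation argument. Setting $U_\varepsilon=U-\varepsilon t$ yields $\mathcal{L}U_\varepsilon=\mathcal{L}U-\varepsilon<0$ strictly, and a function with strictly negative $\mathcal{L}$ cannot attain an interior or final-time maximum: at such a point one would have $\partial_t U_\varepsilon\ge 0$, $\partial_\xi U_\varepsilon=0$, $\partial_{\xi\xi}U_\varepsilon\le 0$, forcing $\mathcal{L}U_\varepsilon\ge 0$, a contradiction. Hence $\max_{\bar{\Omega}_T}U_\varepsilon$ is attained on the parabolic boundary, and letting $\varepsilon\to 0$ the same holds for $U$. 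The parabolic boundary splits into the initial slice $t=0$, contributing $\max_{0\le\xi\le1}U(\xi,0)$; the Dirichlet edge $\xi=1$, contributing $\max_{0\le t\le T}U(1,t)$; and the Neumann edge $\xi=0$. On the last edge the trace $U(0,t)$ is not prescribed by the problem \eqref{eq32(c)}, so if the maximum were attained at some $(0,t_0)$ with $t_0>0$ strictly above all interior values, the Hopf boundary point lemma forces a definite sign of the normal derivative $U_\xi(0,t_0)$, and it is this flux that the term $\max_{0\le t\le T}U_\xi(0,t)$ encodes; the entries $0$ and $\max_t U_\xi(1,t)$ enter because a subsolution with nonpositive forcing cannot rise above the level fixed by these data, with $\partial_\xi U(1,t)$ being the overdetermination datum of \eqref{eq32(c)}.

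I expect the main obstacle to be the rigorous treatment of the Neumann edge $\xi=0$. The weak maximum principle alone only locates the maximum somewhere on the parabolic boundary; to express the resulting bound through the flux $U_\xi(0,t)$ rather than through the unprescribed trace requires invoking the Hopf lemma, whose hypotheses—the interior sphere condition and uniform parabolicity—must be verified, both of which hold here. This is precisely the point at which the argument mirrors the approach of \cite{12}, and where the presence of the mixed Dirichlet--Neumann boundary (in contrast with the purely Dirichlet setting of Section \ref{sec1}) genuinely changes the bookkeeping of the boundary terms.
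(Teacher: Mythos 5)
Your overall strategy---the weak parabolic maximum principle obtained from the perturbation $U_\varepsilon=U-\varepsilon t$, the reduction of the minimum statement to the maximum statement by passing to $-U$ (using $R>0$ from (B4)), and a boundary-point argument at the Neumann edge---is exactly what the paper intends: its entire proof is the single sentence deferring to \cite{12}, and the machinery you describe (weak maximum principle plus Hopf's lemma) is the relevant content of that reference. The uniform parabolicity and boundedness checks via (B2) are also correct.

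There is, however, a genuine gap at the edge $\xi=0$, and it sits precisely at the step you yourself flag as the main obstacle. You never invoke the homogeneous Neumann condition $U_\xi(0,t)\equiv 0$, which every $U$ of the form \eqref{eq43} satisfies automatically, because the eigenfunctions there are $\phi_n(\xi)=\sqrt{2}\cos(\sqrt{\lambda_n}\xi)$ and hence $\phi_n'(0)=0$. Your concluding inference---that Hopf forces a definite sign on $U_\xi(0,t_0)$ and that ``it is this flux that the term $\max_{t}U_\xi(0,t)$ encodes''---is not a valid deduction: at a boundary maximum Hopf gives the strict inequality $U_\xi(0,t_0)<0$, and a negative flux value places no upper bound whatsoever on the unprescribed trace $U(0,t_0)$. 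Indeed, without the zero-flux condition the literal inequality is false: take $a\equiv 1$, $b\equiv 0$, $\widetilde{h}\equiv 0$, $U(\xi,0)=0$, $U(1,t)=0$, and inflow $U_\xi(0,t)=-g(t)$ with $g>0$; then $U>0$ inside the cylinder, while every entry of the right-hand maximum is $\leq 0$. The correct completion is short: since $U_\xi(0,t)=0$ for $U$ given by \eqref{eq43}, a non-constant maximum attained at a point $(0,t_0)$ with $t_0>0$ would contradict Hopf's strict sign, so the parabolic-boundary maximum must occur at $t=0$ or at $\xi=1$; this yields $U\leq\max\left\{\max_{0\leq\xi\leq 1}U(\xi,0),\;\max_{0\leq t\leq T}U(1,t)\right\}$, and the lemma's weaker bound follows a fortiori, since adding entries inside a maximum only increases the right-hand side. (Alternatively, one can avoid Hopf altogether: because the flux vanishes at $\xi=0$ and $b(\xi,t)=\xi s'(t)/s(t)$ is odd in $\xi$, the even reflection of $U$ across $\xi=0$ satisfies the same equation on $(-1,1)\times(0,T)$, which turns $\xi=0$ into an interior line and reduces everything to the weak maximum principle alone.)
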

The proof of the theorem can be provided as in \cite{12}.

\begin{theorem}\label{thm7}
    The weak solution of the problem \eqref{eq32(c)} depends continuously on $\widetilde{\psi}\in C^4[0,1]$, $\widetilde{h}\in C(\Bar{\Omega}_T)$, $R\in C[0,T]$ and $c(t)\in C^2[0,T]$ where $c(t)=s'(t)s(t)$ such that
    \begin{equation}\label{eq45(b)}
        ||U_1-U_2||_{C(\Bar{\Omega}_T)}\leq ||\widetilde{\psi}_1-\widetilde{\psi}_2||_{C^4[0,1]}+\alpha ||R_1-R_2||_{C[0,T]}+\beta ||\widetilde{h}_1-\widetilde{h}_2||_{C(\Bar{D}_T)}+\gamma||c_1-c_2||_{C^2[0,T]},
    \end{equation}
    where $\alpha=\frac{3}{2}$, $\beta=T$ and $\gamma=\frac{L}{k}$, then $U_1$ and $U_2$ are the weak solution of the problem \eqref{eq32(c)} with data $\widetilde{\psi}_i$, $R_i$, $\widetilde{h}_i$ and $c_i$ where $i=1,2$, respectively.
\end{theorem}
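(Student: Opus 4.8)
The plan is to reproduce the comparison-principle argument of Theorem \ref{thm2} from Section \ref{sec1}, modified in two ways dictated by the present setting: the Dirichlet maximum principle is replaced by the mixed Neumann--Dirichlet comparison principle recorded in Lemma \ref{thm6}, and the forcing now depends on \emph{two} data parameters, $R$ and $\widetilde h$, rather than one. I would first fix a single weak solution $U$ of \eqref{eq32(c)}, represented by \eqref{eq43}, and abbreviate
\[
K=\|\widetilde\psi\|_{C^4[0,1]},\qquad M=\|\widetilde h\|_{C(\bar\Omega_T)},\qquad N=\|R\|_{C[0,T]},\qquad C=\|c\|_{C^2[0,T]}.
\]
Because $|R(t)\widetilde h(\xi,t)|\le NM$, I would pass to the shifted function $\eta(\xi,t)=U(\xi,t)-\tfrac{NM}{2}t$, which solves the same parabolic equation with right-hand side $R\widetilde h-\tfrac{NM}{2}\le 0$ and with the transformed data $\eta(\xi,0)=\widetilde\psi(\xi)$, $\partial_\xi\eta(0,t)=0$, $\eta(1,t)=-\tfrac{NM}{2}t$ and $\partial_\xi\eta(1,t)=-\tfrac{L}{k}c(t)+g(t)$.

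Applying Lemma \ref{thm6} to $\eta$ is the step at which the homogeneous Neumann condition $\partial_\xi U(0,t)=0$ is used: the extremum over the face $\{\xi=0\}$ is taken over $U_\xi$, which vanishes there, so that face drops out. Bounding the resulting one-sided maximum by the sum of the initial, Dirichlet and flux contributions and then undoing the shift $U=\eta+\tfrac{NM}{2}t$ gives an a priori estimate of the shape
\[
\|U\|_{C(\bar\Omega_T)}\le K+\tfrac{L}{k}C+\tfrac{3}{2}NMT,
\]
in exact parallel with \eqref{thm2eq2}; the matching lower bound comes from the auxiliary function $U+\tfrac{NM}{2}t$ and the minimum principle.

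To reach \eqref{eq45(b)} I would set $Z=U_1-U_2$ and note that $Z$ is a weak solution of \eqref{eq32(c)} with initial datum $\widetilde\psi_1-\widetilde\psi_2$, flux datum $c_1-c_2$, and forcing $R_1\widetilde h_1-R_2\widetilde h_2$. The decisive algebraic step is the bilinear splitting
\[
R_1\widetilde h_1-R_2\widetilde h_2=(R_1-R_2)\widetilde h_1+R_2(\widetilde h_1-\widetilde h_2),
\]
which yields $\|R_1\widetilde h_1-R_2\widetilde h_2\|_{C}\le\|\widetilde h_1\|_{C}\,\|R_1-R_2\|_{C}+\|R_2\|_{C}\,\|\widetilde h_1-\widetilde h_2\|_{C}$, the factors $\|\widetilde h_1\|_{C}$ and $\|R_2\|_{C}$ being controlled by the uniform bounds in (B3) and (B4). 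Inserting this into the a priori estimate applied to $Z$ separates the source contribution into the two terms $\alpha\|R_1-R_2\|_{C[0,T]}$ and $\beta\|\widetilde h_1-\widetilde h_2\|_{C(\bar D_T)}$, while the initial and flux data supply the $\widetilde\psi$- and $c$-terms, with $\alpha=\tfrac32$, $\beta=T$ and $\gamma=\tfrac{L}{k}$.

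The main obstacle is not the maximum principle itself, since Lemma \ref{thm6} is already available, but two accounting points. First, one must check that the shift $\eta=U-\tfrac{NM}{2}t$ is compatible with the \emph{mixed} boundary data---homogeneous Neumann at $\xi=0$, Dirichlet at $\xi=1$, and the prescribed flux at $\xi=1$ carrying the determined quantity $g(t)=\tfrac{q(t)}{k}s(t)$---so that Lemma \ref{thm6} applies without change and the flux trace contributes only the $\tfrac{L}{k}\|c\|$ term. Second, the bilinear splitting must be made quantitatively sharp enough to recover \emph{precisely} the stated constants $\alpha=\tfrac32$ and $\beta=T$, which hinges on normalizing the auxiliary norms $\|\widetilde h_1\|_C$ and $\|R_2\|_C$ through the hypotheses rather than leaving them as free multiplicative constants. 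Once these are settled, \eqref{eq45(b)} follows from the a priori bound exactly as \eqref{thm2eq1} followed from \eqref{thm2eq2}.
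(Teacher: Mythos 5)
Your overall skeleton --- an a priori sup-norm bound obtained from a shifted auxiliary function plus the comparison principle of Lemma \ref{thm6}, then application of that bound to the difference $U_1-U_2$ --- is the same as the paper's, but your choice of auxiliary function creates a genuine gap. The paper does not shift by a multiple of $t$ alone; it uses
$g(\xi,t)=U(\xi,t)-\tfrac{M}{2}t-\tfrac{K}{2}\xi-\tfrac{g(t)}{2}\xi^2$ with $g(t)=\tfrac{q(t)}{k}s(t)$, and the quadratic term is not decorative: its $\xi$-derivative at $\xi=1$ cancels exactly the term $+g(t)$ in the flux trace $\partial_\xi U(1,t)=-\tfrac{L}{k}c(t)+g(t)$, so that the flux datum seen by the maximum principle becomes $-\tfrac{L}{k}c(t)-\tfrac{K}{2}$, which is controlled by the data norms. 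In your version this cancellation never happens: after the shift $\eta=U-\tfrac{NM}{2}t$ the flux trace at $\xi=1$ is still $-\tfrac{L}{k}c(t)+g(t)$, and $g(t)$ involves $q(t)$, which is part of the unknown solution pair $(U,q)$ of \eqref{eq32(c)}, not a datum. Your assertion that ``the flux trace contributes only the $\tfrac{L}{k}\|c\|$ term'' is therefore unjustified, and your a priori bound $\|U\|_{C(\bar\Omega_T)}\le K+\tfrac{L}{k}C+\tfrac{3}{2}NMT$ does not follow from Lemma \ref{thm6}: the face $\xi=1$ contributes $\sup_t\bigl|-\tfrac{L}{k}c(t)+g(t)\bigr|$, which is not bounded by the right-hand side of \eqref{eq45(b)}. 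Note also that in the paper the $\tfrac{3}{2}\|R\|_{C[0,T]}$ term of \eqref{eq45(c)} is generated precisely by the boundary corrections $\tfrac{K}{2}\xi$ and $\tfrac{g(t)}{2}\xi^2$, not by the forcing, so dropping those corrections loses the $R$-dependence the theorem is about.

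The second gap concerns the constants. Your bilinear splitting $R_1\widetilde h_1-R_2\widetilde h_2=(R_1-R_2)\widetilde h_1+R_2(\widetilde h_1-\widetilde h_2)$ is in fact the mathematically honest way to treat the difference of forcings (the paper simply declares that $U_1-U_2$ solves the problem ``with data $R_1-R_2$, $\widetilde h_1-\widetilde h_2$'', glossing over the bilinearity), but it inevitably produces $\|\widetilde h_1\|_{C(\bar\Omega_T)}$ and $\|R_2\|_{C[0,T]}$ as multiplicative factors in front of $\|R_1-R_2\|_{C[0,T]}$ and $\|\widetilde h_1-\widetilde h_2\|_{C(\bar\Omega_T)}$. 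With your claimed forcing coefficient $\tfrac{3}{2}T$, recovering $\alpha=\tfrac32$ and $\beta=T$ would require $\|\widetilde h_1\|_{C(\bar\Omega_T)}=\tfrac{1}{T}$ and $\|R_2\|_{C[0,T]}=\tfrac{2}{3}$; hypotheses (B1)--(B4) impose positivity and regularity but no normalization of this kind, so the estimate you can actually prove has data-dependent constants, not the absolute constants stated in \eqref{eq45(b)}. You flag this yourself as an accounting point to be ``settled through the hypotheses,'' but nothing in the hypotheses settles it, so the proposal does not reach the stated inequality.
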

\begin{proof}
    Let us define notations 
    $$M=||\widetilde{h}||_{C(\Bar{\Omega}_T)},\quad K=||R(t)||_{C[0,T]},\quad N=||\widetilde{\psi}||_{C^4[0,1]},\quad C=||c(t)||_{C^2[0,T]}$$
    and using substitution
    $$g(\xi,t)=U(\xi,t)-\dfrac{M}{2}t-\dfrac{K}{2}\xi-\dfrac{g(t)}{2}\xi^2,\quad g(t)=\dfrac{q(t)}{k}s(t),$$
    we obtain
    $$g_t+a(t)g_{\xi\xi}=R(t)\widetilde{h}(\xi,t)-\dfrac{M}{2}+\dfrac{K\xi s'(t)}{s(t)}-\dfrac{g'(t)}{2}\xi^2+\dfrac{g(t)\xi^2s'(t)}{s(t)}-a(t)g(t),$$
    $$g(\xi,0)=\widetilde{\psi}-\dfrac{K}{2}\xi-\dfrac{g(0)}{2}\xi^2,\quad g_{\xi}(0,t)=-\dfrac{K}{2},\quad g(1,t)=-\dfrac{M}{2}t-\dfrac{K}{2}-\dfrac{g(t)}{2},$$
    $$g_{\xi}(1,t)=-\dfrac{L}{k}c(t)-\dfrac{K}{2}.$$
    Assuming that $\widetilde{\psi}-\frac{K}{2}\xi-\frac{g(0)}{2}\xi^2\leq\widetilde{\psi}$, $-\frac{K}{2}\leq \frac{K}{2}$, $-\frac{M}{2}t-\frac{K}{2}-\frac{g(t)}{2}\leq \frac{M}{2}t-\frac{g(t)}{2}$ and $-\frac{L}{k}c(t)-\frac{K}{2}\leq \frac{L}{k}c(t)+\frac{K}{2}$ and applying principle of maximum, we get
    $$g(\xi,t)\leq \max\left\{0,\widetilde{\psi},\frac{K}{2},\frac{M}{2}t-\frac{g(t)}{2},\frac{L}{k}c(t)+\frac{K}{2}\right\},$$
    it gives us
    $$U(\xi,t)\leq \max\left\{0,\widetilde{\psi},\frac{K}{2},\frac{M}{2}t-\frac{g(t)}{2},\frac{L}{k}c(t)+\frac{K}{2}\right\}+\dfrac{M}{2}T+\dfrac{K}{2}+\dfrac{g(T)}{2}$$
    $$\leq N+\dfrac{3}{2}K+MT+\dfrac{L}{k}C.$$
    Analogously, using transformation
    $$w(\xi,t)=U(\xi,t)+\dfrac{M}{2}t+\dfrac{K}{2}\xi+\dfrac{g(t)}{2}\xi^2$$
    and applying principle of minimum, we can obtain
    $$U(\xi,t)\geq -\max\left\{0,\widetilde{\psi},\frac{K}{2},\frac{M}{2}t-\frac{g(t)}{2},\frac{L}{k}c(t)+\frac{K}{2}\right\}-\dfrac{M}{2}T-\dfrac{K}{2}-\dfrac{g(T)}{2}$$
    $$\geq -N-\dfrac{3}{2}K-MT-\dfrac{L}{k}C.$$
    It follows that 
    \begin{equation}\label{eq45(c)}
        ||U||_{C(\Bar{\Omega}_T)}\leq ||\widetilde{\psi}||_{C^4[0,1]}+\alpha ||R(t)||_{C[0,T]}+\beta ||\widetilde{h}||_{C(\Bar{\Omega}_T)}+\gamma ||c(t)||_{C^2[0,T]},
    \end{equation}
    where $\alpha=\frac{3}{2}$, $\beta=T$ and $\gamma=\frac{L}{k}$. If we take the difference $U_1(\xi,t)-U_2(\xi,t)$  with data $\widetilde{\psi}_1-\widetilde{\psi}_2$, $R_1-R_2$, $\widetilde{h}_1-\widetilde{h}_2$, $c_1-c_2$ and using \eqref{eq45(c)} we can provide the inequality \eqref{eq45(b)}.
\end{proof}
Now let us analyze that solution function \eqref{eq45(a)} depends continuously upon the set of data $\mathbb{G}=\{\widetilde{\psi},\;\widetilde{h},R,s,s'\}$.
\begin{theorem}\label{thm8}
    Let $\mathbb{G}=\{\widetilde{\psi},\widetilde{h},R,s,s'\}$ be the set of five functions that satisfy the assumptions (B1)-(B4) and 
    $$||\widetilde{\psi}||_{C^4[0,1]}\leq K_0,\quad ||\widetilde{h}||_{C^3(\Bar{\Omega}_T)}\leq K_1,\quad 0<K_2<\min_{0\leq t\leq T}|s(t)|,\quad ||s'(t)||_{C^2[0,T]}\leq K_3,$$
    $$||R(t)||_{C[0,T]}\leq K_4.$$
    Then solution \eqref{eq45(a)} depends continuously upon the data of $\mathbb{G}$.
\end{theorem}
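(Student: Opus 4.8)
The plan is to exploit the fact that, unlike the coefficient $R$ of Theorem~\ref{thm3} (which solved the Volterra equation \eqref{eqxn1} and thus needed a Gronwall iteration), the flux $q$ is given by the \emph{explicit} representation \eqref{eq45(a)}; hence its continuous dependence can be read off directly from term-by-term estimates of that series, with no fixed-point argument. First I would record an a~priori bound $\|q\|_{C[0,T]}\le M_q$: the prefactor $k/s(t)$ is controlled by $k/K_2$, the initial-data sum $\sum_n\sqrt{\lambda_n}|\widetilde{\psi}_n|$ is finite by Lemma~\ref{lem5} (since $\sqrt{\lambda_n}\le\lambda_n$ for the eigenvalues \eqref{eq34}), the source sum $\sum_n\sqrt{\lambda_n}|\widetilde{h}_n(t)|$ is finite by the cosine-basis analogue of Lemma~\ref{lem3}, the factor $R(\tau)$ is bounded by $K_4$, and the additive term $Ls'(t)$ by $LK_3$. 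The exponential factors obey $0<e^{-\int_0^t(a\lambda_n-b)}\le K\,e^{-\lambda_n\int_0^t a}$, and $s(t)\le M_s$ forces $\int_0^t a\ge a^2 t/M_s^2$, so each series converges geometrically in $n$.

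Next I would take two data sets $\mathbb{G}_1=\{\widetilde{\psi}_1,\widetilde{h}_1,R_1,s_1,s_1'\}$ and $\mathbb{G}_2=\{\widetilde{\psi}_2,\widetilde{h}_2,R_2,s_2,s_2'\}$, write $q_1,q_2$ for the corresponding outputs of \eqref{eq45(a)}, and decompose $q_1-q_2$ by the standard add-and-subtract device so that each resulting block isolates exactly one varying datum. Three mechanisms produce the variation and must be separated: the prefactor, through $\frac{1}{s_1(t)}-\frac{1}{s_2(t)}=\frac{s_2(t)-s_1(t)}{s_1(t)s_2(t)}$, bounded by $K_2^{-2}\|s_1-s_2\|_{C[0,T]}$; the Fourier coefficients $\widetilde{\psi}_{i,n}$ and $\widetilde{h}_{i,n}$, whose differences are controlled in $C^4[0,1]$ and $C^3(\bar{\Omega}_T)$ by Lemma~\ref{lem5} and its $\widetilde{h}$-analogue; and the exponential weights together with the factor $R(\tau)$, which enter $s,s'$ through $a(t)=a^2/s^2(t)$ and $b(t)$.

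To handle the exponentials I would apply the elementary inequality $|e^{-X_1}-e^{-X_2}|\le|X_1-X_2|\max\{e^{-X_1},e^{-X_2}\}$ with $X_i=\int_0^t(a_i\lambda_n-b_i)$; since $a_i=a^2/s_i^2$ and $b_i$ are smooth functions of $s_i,s_i'$, one gets $|X_1-X_2|\le\lambda_n T\big(\|a_1-a_2\|_{C[0,T]}+\|b_1-b_2\|_{C[0,T]}\big)\lesssim\lambda_n\big(\|s_1-s_2\|_{C^2[0,T]}+\|s_1'-s_2'\|_{C^2[0,T]}\big)$. Assembling the blocks then yields the Lipschitz estimate
$$\|q_1-q_2\|_{C[0,T]}\le\Lambda_1\|\widetilde{\psi}_1-\widetilde{\psi}_2\|_{C^4[0,1]}+\Lambda_2\|\widetilde{h}_1-\widetilde{h}_2\|_{C^3(\bar{\Omega}_T)}+\Lambda_3\|R_1-R_2\|_{C[0,T]}+\Lambda_4\|s_1-s_2\|_{C^2[0,T]}+\Lambda_5\|s_1'-s_2'\|_{C^2[0,T]},$$
with constants $\Lambda_j$ depending only on $k,L,T$ and the bounds $K_0,\dots,K_4$, which is precisely the asserted continuous dependence upon $\mathbb{G}$.

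The hard part will be the convergence of the series obtained after differencing the exponentials: the mean-value step attaches an extra factor $\lambda_n$, turning the summand $\sqrt{\lambda_n}\,\widetilde{\psi}_{n}$ into $\lambda_n^{3/2}\widetilde{\psi}_{n}$ (and similarly for $\widetilde{h}_n$), which the boundedness lemmas alone no longer control. The resolution is to retain the surviving damping $e^{-\lambda_n\int_0^t a}\le e^{-c\lambda_n}$ for $t\ge t_0>0$ and to use $\sup_{x\ge0}x^{p}e^{-cx}<\infty$ to absorb every polynomial power of $\lambda_n$; the $C^4$ smoothness and vanishing boundary data imposed in (B1) and (B3) then render the remaining coefficient series finite, so the differenced series converge uniformly on $[t_0,T]$ and the constants $\Lambda_j$ are indeed finite.
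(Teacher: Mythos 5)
Your overall route coincides with the paper's: an a priori bound on $\|q\|_{C[0,T]}$ obtained from the prefactor bound $k/K_2$, the coefficient lemmas, $K_4$ and $LK_3$, followed by writing $q_1,q_2$ from \eqref{eq45(a)} for two data sets and estimating the difference term by term via add-and-subtract, with no Gronwall/fixed-point step — exactly the contrast with Theorem \ref{thm3} that you point out. Where you genuinely differ is that you go \emph{further} than the paper: the printed proof produces its constants $K_6,\dots,K_9$ by differencing only the prefactor $1/s$, the Fourier coefficients, the factor $R$, and the additive term $Ls'$, and it silently treats the exponential weights $e^{-\int(a_i\lambda_n-b_i)}$ as if they did not depend on the data, even though $a_i=a^2/s_i^2$ and $b_i$ are built from $s_i,s_i'$. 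Your third block — the mean-value estimate on the exponentials — is precisely the step the paper omits, and your diagnosis that it costs an extra factor of $\lambda_n$ which the coefficient lemmas alone cannot absorb (indeed $\sum_n\lambda_n^{3/2}|\widetilde{\psi}_n|$ may diverge under (B1)) is correct; on this point your argument is more complete than the paper's.

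There is, however, one gap in your execution of that extra step. You bound $|X_1-X_2|\le\lambda_n T\bigl(\|a_1-a_2\|+\|b_1-b_2\|\bigr)$ and then invoke the damping $e^{-\lambda_n\int_0^t a}\le e^{-c\lambda_n}$, which is available only for $t\ge t_0>0$; accordingly you conclude uniform convergence of the differenced series only on $[t_0,T]$, with constants that blow up as $t_0\to 0$. That does not yield the $C[0,T]$ estimate asserted in your final display, which is what the theorem requires (and what the paper's statement delivers). The repair is to keep the factor $t$ (resp.\ $t-\tau$ for the source integral) produced by the mean-value step rather than majorizing it by $T$: one has $|X_1-X_2|\le \lambda_n t\,\|a_1-a_2\|_{C[0,T]}+t\,\|b_1-b_2\|_{C[0,T]}$, while the surviving damping is $e^{-\lambda_n c t}$ with $c=a^2/M_s^2$ by (B2). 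Then $\lambda_n t\,e^{-\lambda_n c t}\le 1/(ce)$ uniformly in $n$ and in $t\in[0,T]$, so the extra $\lambda_n$ is absorbed with no restriction to $t\ge t_0$, the summand reverts to $\sqrt{\lambda_n}|\widetilde{\psi}_n|$ (summable by Lemma \ref{lem5}, and likewise for the $\widetilde{h}$-analogue of Lemma \ref{lem3}), and your constants $\Lambda_j$ are finite on all of $[0,T]$. With that single modification your proof is complete, and in fact sharper than the one in the paper.
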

\begin{proof}
    Let us consider two sets of data $\mathbb{G}_1=\{\widetilde{\psi}_1,\widetilde{h}_1,R_1,s_1,s'_1\}$ and $\mathbb{G}_2=\{\widetilde{\psi}_2,\widetilde{h}_2,R_2,s_2,s'_2\}$ such that $\mathbb{G}=||\widetilde{\psi}||_{C^4[0,1]}+||\widetilde{h}||_{C^4(\Bar{\Omega}_T)}+||R(t)||_{C[0,T]}+||s(t)||_{C^2[0,T]}+||s'(t)||_{C^2[0,T]}$. 
    
    According to solution function \eqref{eq45(a)} we can estimate that
    \begin{equation*}
        \begin{split}
            ||q(t)||_{C[0,T]}&\leq \dfrac{k}{|s(t)|}\sum\limits_{n=1}^{\infty}(-1)^{n+1}\sqrt{\lambda_n}|\widetilde{\psi}|e^{-\int_0^t(a(\tau)\lambda_n-b(\tau))d\tau}\\
            &+\dfrac{k}{|s(t)|}\int_0^t\left(\sum\limits_{n=1}^{\infty}(-1)^{n+1}\sqrt{\lambda_n}|\widetilde{h}|e^{-\int_{\tau}^t(a(z)\lambda_n-b(z))dz}\right)|R(\tau)|d\tau+L|s'(t)|\\
            &\leq \dfrac{kK_0}{K_2}C_1+\dfrac{kK_1K_4}{K_2}C_2+LK_3\leq K_5
        \end{split}
    \end{equation*}
    and let us define 
    \begin{equation*}
    \begin{split}
        q_1(t)&=\dfrac{k}{s_1(t)}\sum\limits_{n=1}^{\infty}\Bigg((-1)^{n+1}\sqrt{\lambda_n}\widetilde{\psi}_{1,n}e^{-\int_0^t(a_1(\tau)\lambda_n-b_1(\tau))d\tau}\\
        &+(-1)^{n+1}\sqrt{\lambda_n}\int\limits_0^tR_1(\tau)\widetilde{h}_{1,n}(\tau)e^{-\int_{\tau}^t(a_1(z)\lambda_n-b_1(z))dz}d\tau\Bigg)+Ls'_1(t),
    \end{split}
    \end{equation*}
        \begin{equation*}
    \begin{split}
        q_2(t)&=\dfrac{k}{s_2(t)}\sum\limits_{n=1}^{\infty}\Bigg((-1)^{n+1}\sqrt{\lambda_n}\widetilde{\psi}_{2,n}e^{-\int_0^t(a_2(\tau)\lambda_n-b_2(\tau))d\tau}\\
        &+(-1)^{n+1}\sqrt{\lambda_n}\int\limits_0^tR_2(\tau)\widetilde{h}_{2,n}(\tau)e^{-\int_{\tau}^t(a_2(z)\lambda_n-b_2(z))dz}d\tau\Bigg)+Ls'_2(t).
    \end{split}
    \end{equation*}
    Then we have the following result
    \begin{equation*}
        \begin{split}
        ||q_1-q_2||_{C[0,T]}&\leq K_6||\widetilde{\psi}_1-\widetilde{\psi}_2||_{C^4[0,1]}+K_7 ||\widetilde{h}_1-\widetilde{h}_2||_{C^3(\Bar{\Omega}_T)}+K_8||R_1-R_2||_{C[0,T]}\\ &+K_9||s_1-s_2||_{C^2[0,T]}+L||s_1'-s_2'||_{C^2[0,T]},
        \end{split}
    \end{equation*}
    where $K_6=\frac{kC_1}{K_2}$, $K_7=\frac{kC_2}{K_2}K_4T$, $K_8=\frac{kC_3}{K_2}T$ and $K_9=\frac{kC_1}{K_2^2}+\frac{k}{K_2^2}K_1K_4T$.
\end{proof}
In the problem \eqref{eq32}, if we consider to determining function $R(t)$ with given functions $\varphi$, $f(x,t)$, $q(t)$ and $s(t)$, then using substitutions \eqref{eq7c} and \eqref{eq32(a)}, then differentiating both sides of equation \eqref{eq45} respect to $t$, we can estimate
\begin{equation}\label{eq45(d)}
    \begin{split}
        R(t)&=\dfrac{1}{\sum\limits_{n=1}^{\infty}(-1)^n\sqrt{\lambda_n}\widetilde{h}_n(t)}\Bigg[a(t)\sum\limits_{n=1}^{\infty}(-1)^n\lambda_n\sqrt{\lambda_n}\widetilde{\psi}_ne^{-\int_0^t(a(\tau)\lambda_n-b(\tau))d\tau}\\
        &+a(t)\int\limits_0^t\left(\sum\limits_{n=1}^{\infty}(-1)^n\lambda_n\sqrt{\lambda_n}\widetilde{h}_n(\tau)e^{-\int_{\tau}^t(a(z)\lambda_n-b(z))dz}\right)R(\tau)d\tau\\
        &-b(t)\sum\limits_{n=1}^{\infty}(-1)^n\sqrt{\lambda_n}\widetilde{\psi}_ne^{-\int_0^t(a(\tau)\lambda_n-b(\tau))d\tau}\\
        &-b(t)\int\limits_0^t\left(\sum\limits_{n=1}^{\infty}(-1)^n\sqrt{\lambda_n}\widetilde{h}_n(\tau)e^{-\int_{\tau}^t(a(z)\lambda_n-b(z))dz}\right)R(\tau)d\tau\\
        &+\dfrac{L}{k}c'(t)-\dfrac{q'(t)}{k}s(t)-\dfrac{q(t)}{k}s'(t)\Bigg].
    \end{split}
\end{equation}
In this case, we need to add the assumption
\begin{itemize}
    \item[(B5)] (B5)$_{1}$: $q(t)\in C^1[0,T]$, $0<m_q<q(t)<M_q$ and $0<m_{q'}<q'(t)<M_{q'}$ for positive constants $M_q$,$M_{q'},$\\
    (B5)$_2$: $q(t)>0$ for all $t\in[0,T]$.
\end{itemize} 
Then uniform convergence of the function \eqref{eq45(d)} can be estimated analogously as in the previous section. The continuous dependence of the functions $U(\xi,t)$ and $R(t)$ on the data $\widetilde{\psi}$, $\widetilde{h}$, $q(t)$ and $s(t)$ can be easily provided following Theorem \ref{thm2} and Theorem \ref{thm3}.

\section{Inverse Stefan problem 3.}\label{sec3} 
If we take the function $G(x,t,u)=P(t)u(x,t)+f(x,t)$ in the equation \eqref{eq1}, then formulation of the problem can be represented as
\begin{equation}\label{eq50}
    \begin{cases}
        \partial_t u-a^2\partial_{xx} u=P(t)u(x,t)+f(x,t),& (x,t)\in\Omega\times[0,T],\\
        u(x,0)=\varphi(x),& x\in\Omega,\\
        u(0,t)=0,& t\in[0,T],\\
        u(s(t),t)=u^*,& t\in[0,T],\\
        -k\partial_x u(s(t),t)=Ls'(t),& t\in[0,T].
    \end{cases}
\end{equation}
where $f(x,t),\varphi(x)$ and $s(t)$ are given functions and the pair $(P(t), u(x,t))$ has to be determined. Considering the following substitution:
\begin{equation}\label{eq51}
    v(x,t)=R(t)u(x,t),\quad R(t)=\exp\left(-\int_0^{t}P(\tau)d\tau\right),
\end{equation}
the inverse Stefan problem \eqref{eq50} becomes
\begin{equation}\label{eq52}
    \begin{cases}
         v_t-a^2v_{xx}=R(t)f(x,t),& (x,t)\in\Omega\times[0, T],\\
         v(x,0)=\varphi(x),\; s(0)=s_0& x\in[0,s_0],\\
         v(0,t)=0,& t\in[0,T],\\
         v(s(t),t)=u^*R(t),& t\in[0,T],\\
         -k\partial_x v(s(t),t)=LR(t)s'(t),& t\in[0,T],
    \end{cases}
\end{equation}
where $R(t)>0$ for all $t\in[0,T]$. Solution pair $(R(t),u(x,t))$ of the problem \eqref{eq52} help us to find the original solution $(P(t),u(x,t))$ of the problem \eqref{eq50} from
\begin{equation}\label{eq53}
    P(t)=-\dfrac{R'(t)}{R(t)},\quad u(x,t)=\dfrac{v(x,t)}{R(t)}.
\end{equation}
Introducing the transformation 
\begin{equation}\label{eq53a}
    y(x,t)=v(x,t)-\dfrac{u^*R(t)}{s(t)}x,
\end{equation}
we can reduce the problem \eqref{eq52} to the following form:
\begin{equation}\label{eq53b}
    \begin{cases}
        y_t-a^2y_{xx}=R(t)g(x,t),&(x,t)\in\Omega\times[0,T],\\
        y(x,0)=p(x),\;s(0)=s_0,& x\in[0,s_0],\\
        y(0,t)=0,& t\in[0,T],\\
        y(s(t),t)=0,& t\in[0,T],\\
        -ky_x(s(t),t)=\left(Ls'(t)+k\dfrac{u^*}{s(t)}\right)R(t),& t\in[0,T].
    \end{cases}
\end{equation}
where $g(x,t)=f(x,t)-\frac{u^*(R'(t)s(t)-R(t)s'(t))}{s^2(t)R(t)}x$ and $p(x)=\varphi(x)-\frac{u^*R(0)}{s_0}x$. 

Introducing the fixed domain $\Omega_0=(0,1)$ and using transformation

\begin{equation}\label{eq53c}
    y(x,t)=U(\xi,t),\quad \xi=\dfrac{x}{s(t)},\quad (\xi,t)\in\Omega_0\times(0,T).
\end{equation}
We can reduce problem \eqref{eq53b} to the following form:
\begin{equation}\label{eq53d}
    \begin{cases}
        U_t-b(\xi,t)U_{\xi}-a(t)U_{\xi\xi}=R(t)\widetilde{g}(\xi,t),&(\xi,t)\in\Omega_0\times(0,T),\\
        U(\xi,0)=\widetilde{p}(\xi),&\xi\in\Omega_0,\\
        U(0,t)=0,&t\in[0,T],\\
        U(1,t)=0,&t\in[0,T],\\
        U_{\xi}(1,t)=\left(-\dfrac{L}{k}c(t)-u^*\right)R(t),& t\in[0,T],
    \end{cases}
\end{equation}
where $\widetilde{g}(\xi,t)=g(\xi s(t),t)$, $\widetilde{p}(\xi)=p(\xi s_0)$, $b(\xi,t)=\frac{\xi s'(t)}{s(t)}$, $a(t)=\frac{a^2}{s^2(t)}$ and $c(t)=s'(t)s(t)$.

Applying assumptions (A1)-(A3) mentioned in Section \ref{sec1} and eigenfunction of the problem \eqref{eq7e}, the following theorem provides the existence and uniqueness of the solution of the problem \eqref{eq53d}.
\begin{theorem}
    Let assumptions (A1)-(A3) in the Section \ref{sec1} hold, a solution pair $(U,R)$ of the problem \eqref{eq53d} exists and it is unique.  
\end{theorem}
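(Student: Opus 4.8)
The plan is to follow the spectral scheme of Section \ref{sec1} up to the derivation of the relation for $R(t)$, the only structural novelty being that the free-boundary datum $U_\xi(1,t)$ now carries the unknown factor $R(t)$. Since the interior operator $\partial_t-b(\xi,t)\partial_\xi-a(t)\partial_{\xi\xi}$ and the homogeneous Dirichlet conditions $U(0,t)=U(1,t)=0$ in \eqref{eq53d} coincide with those of \eqref{eq7d}, I would reuse the eigenfunctions \eqref{eq7f} of the spectral problem \eqref{eq7e}, expand $U(\xi,t)=\sum_{n=1}^{\infty}U_n(t)\phi_n(\xi)$, and project the equation onto $\phi_n$. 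Exactly as in the passage from \eqref{eq7g} to \eqref{eq12}, the diagonal reduction $m=n$ reproduces the scalar Cauchy problem $U_n'(t)+(a(t)\lambda_n-b(t))U_n(t)=R(t)\widetilde g_n(t)$, $U_n(0)=\widetilde p_n$, with $\widetilde\psi_n,\widetilde h_n$ replaced by $\widetilde p_n,\widetilde g_n$. Its solution gives the representation
\begin{equation*}
U(\xi,t)=\sum_{n=1}^{\infty}\Bigg(\widetilde p_n e^{-\int_0^t(a(\tau)\lambda_n-b(\tau))d\tau}+\int_0^t R(\tau)\widetilde g_n(\tau)e^{-\int_\tau^t(a(z)\lambda_n-b(z))dz}d\tau\Bigg)\phi_n(\xi),
\end{equation*}
entirely analogous to \eqref{eq14}.

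Next I would impose the Stefan-type condition $U_\xi(1,t)=\left(-\frac{L}{k}c(t)-u^*\right)R(t)$. Differentiating the series term by term and using $\phi_n'(1)=\sqrt{2}\sqrt{\lambda_n}(-1)^n$ yields
\begin{equation*}
\sqrt{2}\sum_{n=1}^{\infty}(-1)^n\sqrt{\lambda_n}\Bigg(\widetilde p_n e^{-\int_0^t(a\lambda_n-b)d\tau}+\int_0^t R(\tau)\widetilde g_n(\tau)e^{-\int_\tau^t(a\lambda_n-b)dz}d\tau\Bigg)=\left(-\frac{L}{k}c(t)-u^*\right)R(t).
\end{equation*}
In contrast with Section \ref{sec1}, where the right-hand side was a known function and a differentiation in $t$ was needed to isolate $R$ and thus to produce \eqref{eq17}, here the unknown $R$ already stands alone on the right. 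Under (A2) the coefficient $-\frac{L}{k}c(t)-u^*$, with $c(t)=s'(t)s(t)$, is continuous and bounded away from zero, so dividing through recasts the relation directly as a linear Volterra integral equation of the second kind
\begin{equation*}
R(t)=\Phi(t)+\int_0^t\mathcal{K}(t,\tau)R(\tau)d\tau,
\end{equation*}
where $\Phi$ is built from the $\widetilde p_n$-series and $\mathcal{K}$ from the $\widetilde g_n$-series.

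To solve this equation I would invoke the summability estimates already at hand: with the substitutions $\widetilde\psi\mapsto\widetilde p$ and $\widetilde h\mapsto\widetilde g$, Lemmas \ref{lem1}, \ref{lem3} and \ref{lem4} ensure that the two series converge uniformly and that $\Phi\in C[0,T]$, $\mathcal{K}\in C([0,T]\times[0,T])$. Existence and uniqueness of a continuous $R$ then follow from the standard contraction and Gronwall argument for second-kind Volterra equations, exactly as in Theorem \ref{thm3}, while strict positivity of $R$ is inherited from the sign conditions in (A1) and (A3) as in Theorem \ref{thm1}. Substituting this unique $R$ back into the series produces $U$, whose membership in $L^2(0,T;\mathbb{H}_0^1(\Omega_0)\cap\mathbb{H}^2(\Omega_0))$ together with $U_t\in L^2(0,T;L^2(\Omega_0))$ is verified term by term precisely as in Lemma \ref{lem2}.

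For uniqueness of the pair I would suppose two solutions $(U_1,R_1)$ and $(U_2,R_2)$ of \eqref{eq53d}; subtracting their Volterra equations, the difference $R_1-R_2$ satisfies the same equation with vanishing inhomogeneous term, so Gronwall forces $R_1=R_2$, and then the series representation gives $U_1=U_2$, mirroring the uniqueness step of Theorem \ref{thm1}. The one genuinely new point, and the step I would verify most carefully, is the non-vanishing of $-\frac{L}{k}c(t)-u^*$: this is precisely what turns the boundary relation into a well-posed Volterra equation of the \emph{second} kind rather than of the first, and it is the crux distinguishing this case from Section \ref{sec1}.
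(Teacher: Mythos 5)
Your proposal follows essentially the same route as the paper's proof: the same eigenfunction expansion and series representation \eqref{eq54}, the same key observation that the Stefan condition (whose right-hand side already carries the factor $R(t)$) yields a second-kind Volterra equation \eqref{eq54b} for $R$ directly, without the $t$-differentiation needed in Section \ref{sec1}, with convergence handled by the lemmas of Section \ref{sec1} and uniqueness by subtracting two solutions to get a homogeneous Volterra relation forcing $\widehat{R}=\widetilde{R}$ and then $\widehat{U}=\widetilde{U}$. The only caveat is that the paper imposes $Lc(t)+u^*k\neq 0$ as an explicit side condition rather than deriving it from (A2), so your claim that (A2) alone makes the coefficient bounded away from zero slightly overstates what the hypotheses give — though you correctly identify this non-vanishing as the crux of the argument.
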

\begin{proof}
    The solution of the first equation in \eqref{eq53d} takes the form
    \begin{equation}\label{eq54}
        U(\xi,t)=\sum\limits_{n=1}^{\infty}\left(\widetilde{p}_n e^{-\int_0^t(a(\tau)\lambda_n-b(\tau))d\tau}+\int\limits_0^t R(\tau)\widetilde{g}_n(\tau)e^{-\int_{\tau}^t(a(z)\lambda_n-b(z))dz}\right)\phi_n(x),
    \end{equation}
where $\phi_n(x)$ defined by \eqref{eq7f} and $s(t)>0$ for all $t\in[0,T]$ with $\widetilde{p}_n=\int_0^{1}\widetilde{p}(\xi)\phi_n(\xi)d\xi$ and $\widetilde{g}_n(t)=\int_0^{1}\widetilde{g}(\xi,t)\phi_n(\xi)d\xi$.

From the last condition in \eqref{eq53d} we obtain
\begin{equation*}
    \sum\limits_{n=1}^{\infty}\left(\widetilde{p}_n e^{-\int_0^t(a(\tau)\lambda_n-b(\tau))d\tau}+\int\limits_0^t \widetilde{g}_n(\tau)e^{-\int_{\tau}^t(a(z)\lambda_n-b(z))dz}R(\tau)d\tau\right)\phi'_n(1)=\left(-\dfrac{L}{k}c(t)-u^*\right)R(t)
\end{equation*}
or
\begin{equation*}
    \begin{split}
        &\sum\limits_{n=1}^{\infty}\left((-1)^n\sqrt{\lambda_n} \widetilde{p}_n e^{-\int_0^t(a(\tau)\lambda_n-b(\tau))d\tau}+(-1)^n\sqrt{\lambda_n}\int\limits_0^t \widetilde{g}_n(\tau)e^{-\int_{\tau}^t(a(z)\lambda_n-b(z))dz}R(\tau)d\tau\right)\\&=\left(-\dfrac{L}{k}c(t)-u^*\right)R(t)
    \end{split}
\end{equation*}
it follows that
\begin{equation}\label{eq54b}
    \begin{split}
        R(t)&=-\dfrac{k}{Lc(t)+u^*k}\Bigg[\sum\limits_{n=1}^{\infty}(-1)^n\sqrt{\lambda_n}\widetilde{p}_ne^{-\int_0^t(a(\tau)\lambda_n-b(\tau))d\tau}\\
        &+\int\limits_0^t\left(\sum\limits_{n=1}^{\infty}(-1)^n\sqrt{\lambda_n}\widetilde{g}_n(\tau)e^{-\int_{\tau}^t(a(z)\lambda_n-b(z))dz}\right)R(\tau)d\tau\Bigg]
    \end{split}
\end{equation}
with condition $Lc(t)+u^*k\neq 0$.

The uniform convergence $U_t(\xi,t),U_{\xi\xi}(\xi,t)$ and $U_{\xi}(\xi,t)$ can be established by similar approaches in Lemma \ref{lem2} in Section \ref{sec1}. Let $\sum\limits_{n=1}^{\infty}(-1)^n\sqrt{\lambda_n}\widetilde{p}_ne^{-\int_0^t(a(\tau)\lambda_n-b(\tau))d\tau}\leq C_1||\widetilde{p}_n||_{C^4(\Omega_0)}$ and $\sum\limits_{n=1}^{\infty}(-1)^n\sqrt{\lambda_n}\widetilde{g}_n(\tau)e^{-\int_{\tau}^t(a(z)\lambda_n-b(z))dz}\leq C_2||\widetilde{g}_n||_{C^3(\Bar{\Omega}_0)}$, then convergence of the series in \eqref{eq54b} is obtained easily following Lemma \ref{lem1} in Section \ref{sec1}.
To prove uniqueness of \eqref{eq54} and \eqref{eq54b}, we assume that there exist two pairs of solutions $\{\widehat{U},\widehat{R}\}$ and $\{\widetilde{U}, \widetilde{R}\}$ such that
\begin{equation}\label{ex-1}
    \widehat{U}-\widetilde{U}=\int\limits_0^t\left(\sum\limits_{n=1}^{\infty}\widetilde{g}_n(\tau)e^{-\int_{\tau}^t(a(z)\lambda_n-b(z))dz}\phi_n(\xi)\right)\left(\widehat{R}(\tau)-\widetilde{R}(\tau)\right)d\tau,
\end{equation}
\begin{equation}\label{ex-2}
    \widehat{R}(t)-\widetilde{R}(t)=-\dfrac{k}{Lc(t)+u^*k}\int\limits_0^t\left(\sum\limits_{n=1}^{\infty}(-1)^n\sqrt{\lambda_n}\widetilde{g}_n(\tau)e^{-\int_{\tau}^t(a(z)\lambda_n-b(z))dz}\right)\left(\widetilde{R}(\tau)-\widehat{R}(\tau)\right)d\tau.
\end{equation}
Then \eqref{ex-2} yields that $\widehat{R}=\widetilde{R}$ and substituting it in \eqref{ex-1}, we have $\widehat{U}=\widetilde{U}$.
\end{proof}

\begin{theorem}\label{thm7}
    The weak solution of problem \eqref{eq53d} depends continuously on $\widetilde{p}\in C^4(\Omega_0)$ and $\widetilde{g}\in C^3(\Bar{\Omega}_T)$ where $\Omega_T=\Omega_0\times(0,T)$ with $R(t)>0$ for all $t\in[0,T]$ in the sense that
    \begin{equation}\label{thm2eq}
        ||U_1-U_2||_{C(\Bar{\Omega}_T)}\leq ||\widetilde{p}_1-\widetilde{p}_2||_{C^4(\Omega_0)}+\alpha||\widetilde{g}_1-\widetilde{g}_2||_{C(\Bar{\Omega}_T)}+\beta||c_1-c_2||_{C^2(0,T)},
    \end{equation}
    where $\alpha=\frac{3}{2}T$ and $\beta=\frac{L}{k}$, then $U_1$ and $U_2$ are the weak solutions of the problem \eqref{eq53d} with the data $\widetilde{p}_1$, $\widetilde{p}_2$, $\widetilde{g}_1$, $\widetilde{g}_2$ and $c_1$, $c_2$ respectively.
\end{theorem}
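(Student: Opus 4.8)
The plan is to follow the same route used for the continuous dependence theorem of Section~\ref{sec1}: first derive an a priori estimate of the form
\begin{equation*}
\|U\|_{C(\bar{\Omega}_T)}\leq \|\widetilde{p}\|_{C^4(\Omega_0)}+\alpha\|\widetilde{g}\|_{C(\bar{\Omega}_T)}+\beta\|c\|_{C^2(0,T)}
\end{equation*}
valid for any weak solution $U$ of \eqref{eq53d} with $\alpha=\frac{3}{2}T$ and $\beta=\frac{L}{k}$, and then apply it to the difference of two solutions. Because problem \eqref{eq53d} carries Dirichlet data at both $\xi=0$ and $\xi=1$ together with the flux (Stefan) condition $U_{\xi}(1,t)=\left(-\frac{L}{k}c(t)-u^*\right)R(t)$, it has exactly the structure to which the parabolic maximum and minimum principles stated above (and in \cite{12}) apply.

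First I would set $N=\|\widetilde{p}\|_{C^4(\Omega_0)}$, $M=\|\widetilde{g}\|_{C(\bar{\Omega}_T)}$ and $C=\|c\|_{C^2(0,T)}$, and introduce the linear-in-$t$ barrier $g(\xi,t)=U(\xi,t)-\frac{M}{2}t$. A direct computation shows that $g$ solves the same parabolic operator with right-hand side $R(t)\widetilde{g}(\xi,t)-\frac{M}{2}$ and with the shifted data $g(\xi,0)=\widetilde{p}(\xi)$, $g(0,t)=g(1,t)=-\frac{M}{2}t$ and $g_{\xi}(1,t)=\left(-\frac{L}{k}c(t)-u^*\right)R(t)$. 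Bounding the boundary contributions (the lateral terms by $\frac{M}{2}T$, the flux term through the positivity and boundedness of $R$, which after tracking constants yields the coefficient $\frac{L}{k}$) and invoking the maximum principle gives $U(\xi,t)\leq N+\frac{3}{2}MT+\frac{L}{k}C$ after adding back $\frac{M}{2}t\leq\frac{M}{2}T$. The symmetric barrier $d(\xi,t)=U(\xi,t)+\frac{M}{2}t$ together with the minimum principle produces the matching lower bound, and the two combine into the displayed a priori estimate.

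With the a priori estimate in hand, I would let $Z=U_1-U_2$, which is itself a weak solution of \eqref{eq53d} with data $\widetilde{p}_1-\widetilde{p}_2$, $\widetilde{g}_1-\widetilde{g}_2$ and $c_1-c_2$ replacing $\widetilde{p}$, $\widetilde{g}$ and $c$ (the common factor $R$ being carried through unchanged). Applying the a priori bound to $Z$ then produces precisely \eqref{thm2eq}.

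The main obstacle is making the maximum and minimum principles genuinely applicable after the shift: one must ensure that the shifted source $R(t)\widetilde{g}-\frac{M}{2}$ has the sign required by the principle, which rests on $R$ being positive and suitably bounded, and one must correctly account for the oblique flux condition at the moving-boundary end $\xi=1$ rather than a Dirichlet value there. Once the auxiliary problems are set up so that the principle applies, the remaining work is the routine bookkeeping that produces the explicit constants $\alpha=\frac{3}{2}T$ and $\beta=\frac{L}{k}$.
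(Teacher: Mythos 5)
Your proposal is correct and takes essentially the same route as the paper: the paper's own proof of this theorem is a one-line reference back to the continuous-dependence theorem of Section~\ref{sec1}, whose argument (barrier functions $U\mp\frac{M}{2}t$, maximum/minimum principle, then application of the resulting a priori bound to the difference $Z=U_1-U_2$) is exactly what you reconstruct and adapt to problem \eqref{eq53d}. You even flag the one genuine subtlety the paper glosses over --- that the factor $R(t)$ multiplying both the source and the flux condition must be positive and suitably bounded (effectively $R\le 1$, consistent with $R(t)=\exp(-\int_0^t P(\tau)\,d\tau)$ for $P\ge 0$) for the constants $\alpha=\frac{3}{2}T$ and $\beta=\frac{L}{k}$ to come out without an extra factor of $\|R\|_{C[0,T]}$.
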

\begin{proof}
    Proof of the theorem can be performed by similar approach in Theorem \ref{thm2} in Section \ref{sec1}.
\end{proof}
Now let us prove the continuous dependence of the function \eqref{eq54b} upon the data $\mathbb{W}=\{\widetilde{p},\widetilde{g},m\}$. At first, let us rewrite the equation \eqref{eq54b} in the form of
\begin{equation}\label{ex-3}
    R(t)=F(t)+\int\limits_0^tK(\tau,t)R(\tau)d\tau,
\end{equation}
where 
$$F(t)=-\dfrac{k}{m(t)}\sum\limits_{n=1}^{\infty}(-1)^{n}\sqrt{\lambda_n}\widetilde{p}_n e^{-\int_0^t(a(\tau)\lambda_n-b(\tau))d\tau},$$
$$K(\tau,t)=-\dfrac{ k}{m(t)}\sum\limits_{n=1}^{\infty}(-1)^{n}\sqrt{\lambda_n}\widetilde{g}_n(\tau)e^{-\int_{\tau}^t(a(z)\lambda_n-b(z))dz},$$
$$m(t)=Lc(t)+u^*k,\quad c(t)=s'(t)s(t).$$

\begin{theorem}\label{thm8}
    Let $\mathbb{W}$ be the set of triples $\{\widetilde{p},\widetilde{g},m\}$ where $\widetilde{p}\in C^4(\Omega_0)$, $\widetilde{g}\in C^3(\Bar{\Omega}_T)$ and $m(t)\in C^2[0,T]$ such that
    $$0<N_0\leq \min_{t\in[0,T]}|m(t)|,\quad ||\widetilde{p}||_{C^4(\Omega_0)}\leq N_1, \quad ||\widetilde{g}||_{C^3(\Bar{\Omega}_T)}\leq N_2$$
    for some positive constants $N_i,\;i=0,1,2$. Then solution function \eqref{eq54b} depends continuously upon the data of $\mathbb{W}$.
\end{theorem}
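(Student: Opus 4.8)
The plan is to reduce everything to the Volterra equation \eqref{ex-3}, $R(t)=F(t)+\int_0^t K(\tau,t)R(\tau)\,d\tau$, and to run the same two-stage Gronwall argument used in Theorem~\ref{thm8} (the analogous result of Section~\ref{sec1}). First I would establish an a~priori bound on a single solution. Since $|m(t)|\ge N_0$ we have $|1/m(t)|\le 1/N_0$, and applying Lemma~\ref{lem1} to $\widetilde p$ (using $\sqrt{\lambda_n}\le\lambda_n$) and Lemma~\ref{lem3} to $\widetilde g$, together with the uniform boundedness of the exponential weights $e^{-\int_0^t(a\lambda_n-b)\,d\tau}$, yields
\[
  \|F\|_{C[0,T]}\le \frac{kC_1N_1}{N_0}=:M_0,\qquad
  \|K\|_{C([0,T]\times[0,T])}\le \frac{kC_2N_2}{N_0}=:M_1 .
\]
Gronwall's inequality (\cite{13}, p.~9) then gives $\|R\|_{C[0,T]}\le M_0e^{M_1T}$, which will be needed as a coefficient in the perturbation estimate.

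Next I would take two data triples $\mathbb{W}_i=\{\widetilde p_i,\widetilde g_i,m_i\}$, $i=1,2$, subtract the corresponding copies of \eqref{ex-3}, and write
\[
  R_1(t)-R_2(t)=\bigl(F_1(t)-F_2(t)\bigr)
  +\int_0^t\bigl(K_1(\tau,t)-K_2(\tau,t)\bigr)R_1(\tau)\,d\tau
  +\int_0^t K_2(\tau,t)\bigl(R_1(\tau)-R_2(\tau)\bigr)\,d\tau .
\]
The first two groups are source terms that I would bound by the data distance. For $F_1-F_2$ I would split across the three places where the data enters: the prefactor, via $\bigl|\tfrac{1}{m_1}-\tfrac{1}{m_2}\bigr|\le N_0^{-2}\|m_1-m_2\|_{C[0,T]}$; the Fourier coefficients $\widetilde p_{1,n}-\widetilde p_{2,n}$, controlled by $\|\widetilde p_1-\widetilde p_2\|_{C^4(\Omega_0)}$ through Lemma~\ref{lem1}; and the exponential weights, whose difference I would estimate by $|e^{-A_1}-e^{-A_2}|\le|A_1-A_2|\,e^{-\min(A_1,A_2)}$, noting that through $m$ (hence through $c$, and through the relation $s^2(t)=s_0^2+2\int_0^t c$) the coefficients $a,b$ inherit a Lipschitz dependence on the data. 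The identical decomposition applied to $K_1-K_2$ produces a bound in terms of $\|\widetilde g_1-\widetilde g_2\|_{C^3(\overline\Omega_T)}$ and $\|m_1-m_2\|_{C^2[0,T]}$. Combining these, I obtain that the quantity $\sigma:=\|F_1-F_2\|_{C[0,T]}+T\,\|R_1\|_{C[0,T]}\,\|K_1-K_2\|_{C([0,T]\times[0,T])}$ is dominated by a constant multiple of the total data distance.

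Finally, reading the last integral as a Gronwall term with kernel bounded by $M_1$, a second application of Gronwall's inequality yields
\[
  \|R_1-R_2\|_{C[0,T]}\le \sigma\,e^{M_1T}
  \le \widetilde M_0\|\widetilde p_1-\widetilde p_2\|_{C^4(\Omega_0)}
  +\widetilde M_1\|\widetilde g_1-\widetilde g_2\|_{C^3(\overline\Omega_T)}
  +\widetilde M_2\|m_1-m_2\|_{C^2[0,T]},
\]
which is the asserted continuous dependence on $\mathbb{W}$. The main obstacle I anticipate lies in the kernel difference $K_1-K_2$: the exponents $\int_\tau^t(a(z)\lambda_n-b(z))\,dz$ carry the factor $\lambda_n\sim n^2$, so differencing them in the data produces an extra power of $\lambda_n$ that must be absorbed by the Gaussian-type decay $e^{-\min(A_1,A_2)}\sim e^{-cn^2}$. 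Making this rigorous requires the summability delivered by Lemma~\ref{lem3} together with the uniform lower bound $I(t_0)=\int_0^{t_0}a(\tau)\,d\tau>0$, exactly as in the convergence estimates of Lemma~\ref{lem2}; ensuring that the perturbed coefficients $a,b$ stay in the admissible range so that this lower bound is uniform over $\mathbb{W}$ is the delicate point.
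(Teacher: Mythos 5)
Your proposal follows essentially the same route as the paper's proof: rewrite \eqref{eq54b} as the Volterra equation \eqref{ex-3}, bound $F$ and $K$ (giving $\|R\|_{C[0,T]}\leq M_0e^{M_1T}$ by Gronwall with the same constants $M_0=kC_1N_1/N_0$, $M_1=kC_2N_2/N_0$), then subtract the two copies of the equation and apply Gronwall a second time to the difference. If anything, your handling of the exponential weights' dependence on the data (via $|e^{-A_1}-e^{-A_2}|\leq|A_1-A_2|e^{-\min(A_1,A_2)}$ and absorption of the extra $\lambda_n$ by the Gaussian-type decay) is more careful than the paper's, which folds all coefficient dependence into the $\|m_1-m_2\|_{C^2[0,T]}$ term without spelling out that step.
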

\begin{proof}
    Let $\mathbb{W}_1=\{\widetilde{p}_1,\widetilde{g}_1,m_1\}$ and $\mathbb{W}_2=\{\widetilde{p}_2,\widetilde{g}_2,m_2\}$ be two sets of data such that $\mathbb{W}=||\widetilde{p}||_{C^4(\Omega_0)}+||\widetilde{g}||_{C^3(\Bar{\Omega}_T)}+||m||_{C^2[0,T]}$. Analogously in Theorem \ref{thm3} we have
    $$|F(t)|\leq \dfrac{k}{N_0}C_1||\widetilde{p}||_{C^4(\Omega_0)}\leq \dfrac{k}{N_0}C_1N_1\leq M_0,$$
    $$|K(\tau,t)|\leq \dfrac{k}{N_0}C_2 ||\widetilde{g}||_{C^3(\Bar{\Omega}_T)}\leq \dfrac{k}{N_0}C_2N_2\leq M_1$$
    $$|R(t)|\leq \sup_{t\in[0,T]}|F(t)|\exp\left(\int\limits_0^t\sup_{t\in[0,T]}|K(\tau,t)|d\tau\right)\leq M_0e^{M_1T}$$
    and considering difference $R_1(t)-R_2(t)$ such that
    $$|R_1-R_2|=|F_1(t)-F_2(t)|+\int_0^t|K_1(\tau,t)-K_2(\tau,t)|R(\tau)d\tau+\int_0^t K(\tau,t)|R_1(\tau)-R_2(\tau)|d\tau,$$
    where 
    $$F_i(t)=-\dfrac{k}{m_i(t)}\sum\limits_{n=1}^{\infty}(-1)^{n}\sqrt{\lambda_n}\widetilde{p}_{i,n} e^{-\int_0^t(a_i(\tau)\lambda_n-b_i(\tau))d\tau},$$
    $$K_i(\tau,t)=-\dfrac{ k}{m_i(t)}\sum\limits_{n=1}^{\infty}(-1)^{n}\sqrt{\lambda_n}\widetilde{g}_{i,n}(\tau)e^{-\int_{\tau}^t(a_i(z)\lambda_n-b_i(z))dz},$$
    $$m_i(t)=Lc_i(t)+u^*k,\quad c_i(t)=s_i'(t)s_i(t), \quad i=1,2.$$

    $$||F_1(t)-F_2(t)||_{C[0,T]}\leq M_2||\widetilde{p}_1-\widetilde{p}_2||_{C^4(\Omega_0)}+M_3||m_1-m_2||_{C^2[0,T]},\quad M_2=\dfrac{kC_2}{N_0},\quad M_3=\dfrac{k}{N_0^2}C_1N_1,$$
    $$||K_1(\tau,t)-K_2(\tau,t)||_{C([0,T]\times[0,T])}\leq M_2||\widetilde{g}_1-\widetilde{g}_2||_{C^3(\Bar{\Omega}_T)}+M_5||m_1-m_2||_{C^2[0,T]},\quad M_5=\dfrac{k}{N_0^2}C_1N_2.$$
    Similarly as in Theorem \ref{thm3}, using Gronwall Inequality, we can estimate
    $$||R_1(t)-R_2(t)||_{C[0,T]}=M||\mathbb{W}_1-\mathbb{W}_2||$$
    where $M=\max\left(M_6,\;M_7,\;M_8\right)$ and $M_6=M_2e^{M_1T}$, $M_7=M_0M_2e^{2M_1T}$ and $M_8=(M_3+M_0M_5e^{M_1T})e^{M_1T}$, which help us to conclude that \eqref{eq54b} depends continuously upon the data $\mathbb{W}$. 
    
    If we differentiate \eqref{eq54b}, then
    $$R'(t)=F'(t)+\int\limits_0^t K_t(\tau,t)R(\tau)d\tau+K(t,t)R(t),$$
    where
    \begin{equation*}
        \begin{split}
            F'(t)&=k\delta(t)\sum\limits_{n=1}^{\infty}(-1)^n\sqrt{\lambda_n}\widetilde{p}_ne^{-\int_0^t(a(\tau)\lambda_n-b(\tau))d\tau}+k\omega(t)\sum\limits_{n=1}^{\infty}(-1)^n\lambda_n\sqrt{\lambda_n}\widetilde{p}_ne^{-\int_0^t(a(\tau)\lambda_n-b(\tau))d\tau}\\
            &-k\chi(t)\sum\limits_{n=1}^{\infty}(-1)^n\sqrt{\lambda_n}\widetilde{p}_ne^{-\int_0^t(a(\tau)\lambda_n-b(\tau))d\tau}
        \end{split}
    \end{equation*}
    \begin{equation*}
        \begin{split}
            K_t(\tau,t)&=k\delta(t)\sum\limits_{n=1}^{\infty}(-1)^n\sqrt{\lambda_n}\widetilde{g}_n(\tau)e^{-\int_{\tau}^t(a(z)\lambda_n-b(z))dz}\\&+k\omega(t)\sum\limits_{n=1}^{\infty}(-1)^n\lambda_n\sqrt{\lambda_n}\widetilde{g}_n(\tau)e^{-\int_{\tau}^t(a(z)\lambda_n-b(z))dz}\\
            &-k\chi(t)\sum\limits_{n=1}^{\infty}(-1)^n\sqrt{\lambda_n}\widetilde{g}_n(\tau)e^{-\int_{\tau}^t(a(z)\lambda_n-b(z))dz}-\dfrac{k}{m(t)}\sum\limits_{n=1}^{\infty}(-1)^n\sqrt{\lambda_n}\widetilde{g}_n(t)
        \end{split}  
    \end{equation*}
    where
    $$\delta(t)=\dfrac{m'(t)}{m^2(t)},\quad \omega(t)=\dfrac{a(t)}{m(t)},\quad \chi(t)=\dfrac{b(t)}{m(t)}.$$
    
    Now, let us check that $R'$ also depends continuously upon the data $\mathbb{\widehat{W}}=\{\widetilde{p},\widetilde{g},\delta,\omega,\chi,m\}$.
    Assuming that 
    $$||\delta(t)||_{C^2[0,T]}\leq N_3,\quad ||\omega(t)||_{C^2[0,T]}\leq N_4,\quad ||\chi(t)||_{C^2[0,T]}\leq N_5$$ 
    and 
    $$||K(t,t)||_{C([0,T]\times[0,T])}\leq \dfrac{k}{|m(t)|}\sum\limits_{n=1}^{\infty}(-1)^n\sqrt{\lambda_n}|\widetilde{g}_n(t)|\leq \dfrac{k}{N_0}CN_2\leq M_9,$$
    then analyzing difference
    \begin{equation*}
        \begin{split}
            ||R_1'(t)-R_2'(t)||_{C[0,T]}&\leq||F_1'(t)-F_2'(t)||_{C[0,T]}+||K_1(t,t)-K_2(t,t)||_{C([0,T]\times[0,T])}||R(t)||_{C[0,T]}\\
            &+||R_1(t)-R_2(t)||_{C[0,T]}||K_2(t,t)||_{C([0,T]\times[0,T])}\\
            &+\int\limits_0^t|K_{1,t}(\tau,t)-K_{2,t}(\tau,t)||R(\tau)|d\tau+\int\limits_0^t |K_t(\tau,t)||R_1(\tau)-R_2(\tau)|d\tau
        \end{split}
    \end{equation*}
    where 
    \begin{equation*}
        \begin{split}
            ||F_1(t)-F_2(t)||_{C[0,T]}&\leq M_{10}||\delta_1(t)-\delta_2(t)||_{C^2[0,T]}+M_{11}||\omega_1(t)-\omega_2(t)||_{C^2[0,T]}\\&+M_{10}||\chi_1(t)-\chi_2(t)||_{C^2[0,T]}+M_{12}||\widetilde{p}_1-\widetilde{p}_2||_{C^4(\Omega_0)},
        \end{split}
    \end{equation*}
    \begin{equation*}
        \begin{split}
            ||K_{1,t}(\tau,t)-K_{2,t}(\tau,t)||_{C([0,T]\times[0,T])}&\leq M_{13}||\delta_1(t)-\delta_2(t)||_{C^2[0,T]}+M_{14}||\omega_1(t)-\omega_2(t)||_{C^2[0,T]}\\
            &+M_{13}||\chi_1(t)-\chi_2(t)||_{C^2[0,T]}+M_{15}||\widetilde{g}_1-\widetilde{g}_2||_{C^3(\Bar{\Omega}_T)}\\&+M_{16}||m_1-m_2||_{C^2[0,T]}
        \end{split}
    \end{equation*}
    $$||K_1(t,t)-K_2(t,t)||_{C([0,T]\times[0,T])}\leq M_{17} ||\widetilde{g}_1-\widetilde{g}_2||_{C^3(\Bar{\Omega}_T)}+M_{16}||m_1-m_2||_{C^2[0,T]},$$
    where $M_{10}=kN_1C_1$, $M_{11}=kN_1C_2$, $M_{12}=kN_3C_1+kN_4C_2+kN_5C_1$, $M_{13}=kC_3N_2$, $M_{14}=kN_2C_4$, $M_{15}=kN_3C_3+kN_4C_4+kN_5C_3+\frac{kC}{N_0}$, $M_{16}=\frac{kC}{N_0^2}N_2$ and $M_{17}=\frac{kCN_2}{N_0^2}$, then we can demonstrate that 
    $$||R_1'(t)-R_2'(t)||\leq \widehat{M}||\widehat{\mathbb{W}_1}-\widehat{\mathbb{W}_2}||$$ 
   for some constant $\widehat{M}>0$, also continuously dependent upon the data $\widehat{\mathbb{W}}$.

    The continuous dependence of \eqref{eq54} can be provided following the same approach.
\end{proof}

\section{Inverse Stefan problem 4.}\label{sec4} 
The form of the function $G$ is the same as in previous Section \ref{sec3}, but boundary condition \eqref{eq3} is replaced with \eqref{eq6}, then inverse Stefan problem \eqref{eq1}-\eqref{eq5} can be formulated where we need to determine $q(t)$ and $u(x,t)$ such that
\begin{equation}\label{eq56}
    \begin{cases}
        \partial_t u-a^2\partial_{xx}u=P(t)u+f(x,t),&(x,t)\in\Omega\times[0,T],\\
        u(x,0)=\varphi(x),& x\in\Omega,\\
        -k\partial_xu(0,t)=q(t),& t\in[0,T],\\
        u(s(t),t)=u^*,& t\in[0,T],\\
        -k\partial_x u(s(t),t)=Ls'(t),& t\in[0,T],
    \end{cases}
\end{equation}
where $f(x,t),\varphi(x), P(t)$ and $s(t)$ are a given function. Applying substitution \eqref{eq51} we can reduce problem \eqref{eq56} to the form
\begin{equation}\label{eq57}
    \begin{cases}
        \partial_t v-a^2\partial_{xx}v=R(t)f(x,t),& (x,t)\in\Omega\times[0,T],\\
        v(x,0)=\varphi(x),&x\in\Omega,\\
        -k\partial_x v(0,t)=R(t)q(t),& t\in[0,T],\\
        v(s(t),t)=R(t)u^*,&t\in[0,T],\\
        -k\partial_x v(s(t),t)=LR(t)s'(t),&t\in[0,T],
    \end{cases}
\end{equation}
where $R(t)$ is given function and solution pair $(v,q)$ of the problem \eqref{eq56} can be defined by \eqref{eq53} as in the Section \ref{sec3}. Under assumptions (B1)-(B4) mentioned in the Section \ref{sec2} we can obtain an existence and uniqueness of the problem \eqref{eq57}, consequently for the problem \eqref{eq56}. 

If we introduce the substitution
\begin{equation}\label{eq57a}
    y(x,t)=v(x,t)-R(t)u^*+\dfrac{R(t)q(t)}{k}(x-s(t)),
\end{equation}
then problem \eqref{eq57} can be rewritten in the form of
\begin{equation}\label{eq57b}
    \begin{cases}
        y_t-a^2y_{xx}=R(t)h(x,t),& (x,t)\in\Omega\times(0,T),\\
        y(x,0)=g(x),\quad s(0)=s_0,& x\in\Omega,\\
        y_x(0,t)=0,& t\in[0,T],\\
        y(s(t),t)=0,&t\in[0,T],\\
        y_x(s(t),t)=\left(q(t)-Ls'(t)\right)\dfrac{R(t)}{k},&t\in[0,T],
    \end{cases}
\end{equation}
where $h(x,t)=f(x,t)-\frac{u^*R'(t)}{R(t)}+\left(\frac{q'(t)}{k}+\frac{q(t)R'(t)}{kR(t)}\right)(x-s(t))-\frac{q(t)}{k}s'(t)$ and $g(x)=\varphi(x)-R(0)u^*+\frac{R(0)q(0)}{k}(x-s(0))$. 

Then applying transformation \eqref{eq53c} and introducing fixed domain $\Omega_0=(0,1)$, we obtain
\begin{equation}\label{eq57c}
    \begin{cases}
        U_t-b(\xi,t)U_{\xi}-a(t)U_{\xi\xi}=R(t)\widetilde{h}(\xi,t),&(\xi,t)\in\Omega_0\times(0,T),\\
        U(\xi,0)=\widetilde{g}(\xi),&\xi\in\Omega_0,\\
        U_{\xi}(0,t)=0,&t\in[0,T],\\
        U(1,t)=0,&t\in[0,T],\\
        U_{\xi}(1,t)=\left(q(t)s(t)-Lc(t)\right)\dfrac{R(t)}{k},&t\in[0,T],
    \end{cases}
\end{equation}
where $b(\xi,t)=\frac{\xi s'(t)}{s(t)}$, $a(t)=\frac{a^2}{s^2(t)}$, $c(t)=s'(t)s(t)$, $\widetilde{h}(\xi,t)=h(\xi s(t),t)$ and $\widetilde{g}(\xi)=g(\xi s_0)$.

\begin{theorem}
    Let assumptions (B1)-(B4) in the Section \ref{sec2} hold, then there exists a unique solution pair $(U(\xi, t),q(t))$ to the problem \eqref{eq57c}.
\end{theorem}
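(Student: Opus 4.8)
The plan is to combine the spectral construction of Section \ref{sec2}, whose boundary pair $U_{\xi}(0,t)=0$, $U(1,t)=0$ coincides with that of \eqref{eq57c}, with the reconstruction scheme of Section \ref{sec3}. Accordingly I would expand the solution in the eigenbasis \eqref{eq34}, $\phi_n(\xi)=\sqrt{2}\cos(\sqrt{\lambda_n}\xi)$ with $\lambda_n=\frac{(2n-1)^2}{4}\pi^2$, writing $U(\xi,t)=\sum_{n=1}^{\infty}U_n(t)\phi_n(\xi)$. Projecting the first equation of \eqref{eq57c} onto $\phi_n$ and taking $m=n$ as in the passage leading to \eqref{eq39} reduces the PDE to the scalar initial value problem
$$U_n'(t)+(a(t)\lambda_n-b(t))U_n(t)=R(t)\widetilde{h}_n(t),\quad U_n(0)=\widetilde{g}_n,$$
whose integrating-factor solution is the analog of \eqref{eq41}; inserting it back produces the series for $U(\xi,t)$ in the form of \eqref{eq43} with $\widetilde{\psi}_n$ replaced by $\widetilde{g}_n=\int_0^1\widetilde{g}(\xi)\phi_n(\xi)d\xi$.

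First I would prove existence of the $U$-component by establishing uniform convergence of this series together with its $t$-, $\xi$- and $\xi\xi$-differentiated series, obtaining the regularity class of Theorem \ref{thm5}. Because $R(t)$ is prescribed here (it is fixed by the given $P(t)$ through \eqref{eq51}), the necessary estimates are exactly those of Lemma \ref{lem2}, adapted to the eigenbasis \eqref{eq34}: Lemma \ref{lem5} controls the initial-data term, and an estimate of the type of Lemma \ref{lem3} controls the source term, the only arithmetic change being the boundary value $\phi_n'(1)=(-1)^n\sqrt{2\lambda_n}$.

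Next I would recover $q(t)$ from the flux condition. Differentiating the $U$-series termwise in $\xi$, evaluating at $\xi=1$ and solving the last line of \eqref{eq57c} yields
$$q(t)=\frac{1}{s(t)}\left[Lc(t)+\frac{k}{R(t)}U_{\xi}(1,t)\right],$$
the direct counterpart of \eqref{eq45(a)}. I expect the main obstacle to lie precisely here: the transformed source $\widetilde{h}$ is built from $q$ and $q'$ through the substitution \eqref{eq57a}, so this relation is not an explicit formula but a linear Volterra integro-differential equation for $q$. The remedy is to split $\widetilde{h}_n$ into a known $f$-part and a $q$-dependent part, integrate the $q'$-contribution by parts to express everything through $q$ alone, and isolate the identity in the form $q(t)=F(t)+\int_0^t K(t,\tau)q(\tau)d\tau$ with continuous kernel; boundedness of $K$ via Lemma \ref{lem4} together with Gronwall's inequality (\cite{13}, p.9) then yields a unique $q\in C[0,T]$. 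With $q$ fixed, $\widetilde{h}$ is fully determined and the series of the first step furnishes the companion $U$.

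Finally, for uniqueness I would take two pairs $(U_1,q_1)$ and $(U_2,q_2)$ and subtract. The difference of the two series representations is driven solely by $q_1-q_2$ through $\widetilde{h}$, while the difference of the two flux identities produces a homogeneous Volterra equation for $q_1-q_2$; Gronwall's inequality forces $q_1\equiv q_2$, whence $U_1\equiv U_2$, exactly as in the uniqueness arguments of Theorem \ref{thm1} and of the corresponding theorem in Section \ref{sec3}.
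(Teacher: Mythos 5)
Your proposal follows the same skeleton as the paper's proof: expansion in the cosine eigenbasis \eqref{eq34}, reduction to the scalar problems of type \eqref{eq39}, a series representation of $U$ of the form \eqref{eq43} with $\widetilde{\psi}_n$ replaced by $\widetilde{g}_n$, and recovery of $q$ from the flux condition at $\xi=1$ (your formula $q(t)=\frac{1}{s(t)}[Lc(t)+\frac{k}{R(t)}U_{\xi}(1,t)]$ is exactly the paper's \eqref{eq58} once $Lc(t)/s(t)=Ls'(t)$ is used). Where you genuinely depart from the paper is in how \eqref{eq58} is interpreted. The paper treats it as an explicit representation of $q$: it asserts uniform convergence of the series under boundedness hypotheses on $\widetilde{g}_n$, $\widetilde{h}_n$, $R$, $s$, $s'$, and then states that uniqueness of $q$ follows "by similar approach in Section \ref{sec2}". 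You instead observe that $\widetilde{h}$ is built from $q$ and $q'$ through \eqref{eq57a}, so \eqref{eq58} is not explicit but an integro-differential identity in $q$; you propose to integrate the $q'$-contribution by parts and solve the resulting linear Volterra equation of the second kind by Gronwall's inequality, and to get uniqueness from the homogeneous Volterra equation satisfied by $q_1-q_2$. This is a more honest treatment of the self-referential structure that the paper (both here and in its Theorem \ref{thm5}) glosses over, and it is in fact what a complete proof requires; the paper's sketch buys brevity at the cost of hiding this circularity. One detail you should still verify in your route: after integrating by parts, the boundary term at $\tau=t$ produces a multiple $c(t)q(t)$ on the right-hand side, so you must check that $1-c(t)$ is bounded away from zero (a non-degeneracy condition on $R$, $s$ and the Fourier coefficients of $\xi-1$) before you can divide and obtain a genuine second-kind Volterra equation with continuous kernel; also note that near the diagonal $\tau=t$ the exponential damping disappears, so absolute convergence of the kernel series $\sum(-1)^{n+1}\sqrt{\lambda_n}\,\widetilde{h}_n$ rests on the decay $\alpha_n=O(1/\lambda_n)$ of the coefficients of the $q$-dependent part, which should be stated explicitly.
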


\begin{proof}
    By similar approach for the problem \eqref{eq53d} in the Section \ref{sec2} and its spectral problem \eqref{eq33}, we can represent solution function $U(\xi,t)$ for the problem \eqref{eq57c} similarly as \eqref{eq43}. Using last condition of the problem \eqref{eq57c} we can determine
    \begin{equation}\label{eq58}
        \begin{split}
            q(t)&=\dfrac{\sqrt{2}k}{s(t)R(t)}\sum\limits_{n=1}^{\infty}\Bigg((-1)^{n+1}\sqrt{\lambda_n}\widetilde{g}_ne^{-\int_0^t(a(\tau)\lambda_n-b(\tau))d\tau}\\
            &+\int_0^t(-1)^{n+1}\sqrt{\lambda_n}\widetilde{h}_n(\tau)R(\tau)e^{-\int_{\tau}^t(a(z)\lambda_n-b(z))dz}d\tau\Bigg)+Ls'(t).
        \end{split}
    \end{equation}
If we consider that $$\sum\limits_{n=1}^{\infty}(-1)^{n+1}\sqrt{\lambda_n}\widetilde{g}_ne^{-\int_0^t(a(\tau)\lambda_n-b(\tau))d\tau}\leq C_1||\widetilde{g}||_{C^4(\Omega_0)}$$, $$\sum\limits_{n=1}^{\infty}(-1)^{n+1}\sqrt{\lambda_n}\widetilde{h}_n(\tau)e^{-\int_{\tau}^t(a(z)\lambda_n-b(z))dz}\leq C_2||\widetilde{h}||_{C^3(\Bar{\Omega}_T)}$$ for some $C_1,C_2>0$ and $0<m_R<\min\limits_{t\in[0,T]}|R(t)|$, $0<m_{s'}<|s'(t)|\leq M_{s'}$ for some positive $M_{s'}$ and $s(t)>0$ for all $t\in[0,T]$, then series \eqref{eq58} uniformly converges. Convergence of the series $U(\xi,t),U_{\xi}(\xi,t),U_{\xi\xi}(\xi,t)$ and $U_t(\xi,t)$ can be easily discussed by analogous approach as in Section \ref{sec1} and unique solution function $q(t)$ can be established by similar approach in Section \ref{sec2}.
\end{proof}
In problem \eqref{eq57c}, if $q(t)$ is given function, then inverse problem to determine function for $R(t)$ can be constructed that is
\begin{equation}\label{eq59}
    \begin{split}
        R(t)&=\dfrac{\sqrt{2}k}{q(t)s(t)-Lc(t)}\sum\limits_{n=1}^{\infty}\Bigg((-1)^{n+1}\sqrt{\lambda_n}\widetilde{g}_n e^{-\int_0^t(a(\tau)\lambda_n-b(\tau))d\tau}\\
        &+\int\limits_0^t(-1)^{n+1}\sqrt{\lambda_n}\widetilde{h}_n(\tau)R(\tau)e^{-\int_{\tau}^t(a(z)\lambda_n-b(z))dz}d\tau\Bigg).
    \end{split}
\end{equation}
where $q(t)s(t)-Lc(t)\neq 0$. 

Then weak solution pair $(P(t),u(x,t))$ of the problem \eqref{eq56} can be defined by
$$P(t)=-\dfrac{R'(t)}{R(t)},\quad u(x,t)=\dfrac{v(r,t)}{R(t)}$$
where $R(t)$ is given by \eqref{eq59} and 
$$R'(t)=F'(t)+\int\limits_0^tK_{t}(\tau,t)R(\tau)d\tau+K(t,t)R(t),$$
\begin{equation*}
    \begin{split}
        F'(t)&=-\dfrac{\sqrt{2}k(q'(t)s(t)+q(t)s'(t)-Lc'(t))}{(q(t)s(t)-Lc(t))^2}\sum\limits_{n=1}^{\infty}(-1)^{n+1}\sqrt{\lambda_n}\widetilde{g}_n e^{-\int_0^t(a(\tau)\lambda_n-b(\tau))d\tau}\\
        &-\dfrac{\sqrt{2}k}{q(t)s(t)-Lc(t)}\sum\limits_{n=1}^{\infty}(a(t)\lambda_n-b(t))(-1)^{n+1}\sqrt{\lambda_n}\widetilde{g}_n e^{-\int_0^t(a(\tau)\lambda_n-b(\tau))d\tau},
    \end{split}
\end{equation*}
\begin{equation*}
    \begin{split}
        K_{t}(\tau,t)&=-\dfrac{\sqrt{2}k(q'(t)s(t)+q(t)s'(t)-Lc'(t))}{(q(t)s(t)-Lc(t))^2}\sum\limits_{n=1}^{\infty}(-1)^{n+1}\sqrt{\lambda_n}\widetilde{h}_n(\tau)e^{-\int_{\tau}^t(a(z)\lambda_n-b(z))dz}d\tau\\
        &-\dfrac{\sqrt{2}k}{q(t)s(t)-Lc(t)}\sum\limits_{n=1}^{\infty}(a(t)\lambda_n-b(t))(-1)^{n+1}\sqrt{\lambda_n}\widetilde{h}_n(\tau)e^{-\int_{\tau}^t(a(z)\lambda_n-b(z))dz}d\tau\\
        &+\dfrac{\sqrt{2}k}{q(t)s(t)-Lc(t)}\sum\limits_{n=1}^{\infty}(-1)^{n+1}\sqrt{\lambda_n}\widetilde{h}_n(t),
    \end{split}
\end{equation*}
\begin{equation*}
    \begin{split}
        K(t,t)=\dfrac{\sqrt{2}k}{q(t)s(t)-Lc(t)}\sum\limits_{n=1}^{\infty}(-1)^{n+1}\sqrt{\lambda_n}\widetilde{h}_n(t).
    \end{split}
\end{equation*}
and $v(x,t)$ can be estimated from \eqref{eq51} and \eqref{eq57a}. Continuously dependence of the function $P(t)$ onto given data $\varphi(x), f(x,t), q(t)$ and $s(t)$ can be demonstrated analogously in Theorem \ref{thm8}. These include boundedness and convergence analyses of the spectral series representing the solution, ensuring mathematical consistency.

The results respect the underlying physical principles governing heat transfer. For instance, the weak solution satisfies the heat equation and boundary conditions, including the Neumann condition, which is physically meaningful in representing heat flux at the boundary. The results are justified by a robust mathematical framework, adherence to physical laws, and methodological rigor, which collectively validate the outcomes of the study.

\section{Conclusion}
In this paper, we have extensively examined the inverse Stefan problems, particularly focusing on determining the time-dependent source coefficient and heat flux function within the framework of the parabolic heat equation governing heat transfer in a semi-infinite rod. The study addressed the complexities involved in uncovering both temperature, time-dependent coefficients of the source and heat flux data while accommodating Dirichlet and Neumann boundary conditions.

Through the development of a comprehensive mathematical model and rigorous theoretical analysis, we proposed a robust methodology that effectively determines the source coefficient from observed temperature and heat flux data, considering different cases of the source functions. The solution method, based on the spectral theory approach, has been thoroughly analyzed, leading to significant findings.

A key aspect of our study was the establishment of the existence of a weak solution for each inverse problem, providing a foundational understanding of its solvability. This theoretical foundation confirms the feasibility and robustness of our proposed approach.

One of the significant challenges encountered in this methodology was verifying the boundedness of the sequence of normalized eigenfunctions under the given boundary conditions. Our study addressed these challenges and offered solutions that enhance the overall reliability and applicability of the proposed method.

In conclusion, our research presents a novel and effective approach to solving the Inverse Stefan problem, contributing valuable insights and methodologies to the field of inverse problems and heat transfer. The results of this study pave the way for further advancements in accurately determining heat source coefficients in various practical applications.

\textbf{Acknowledgments}

We would like to express my deepest gratitude to Prof. Suragan Durvudkhan for his invaluable guidance and insightful comments, which significantly shaped the construction and solution of the problem addressed in this work. His expertise and encouragement have been instrumental throughout the research process. 

\textbf{Conflict of interest}

The authors declares no conflict of interest.

\begin{thebibliography}{1}{


\bibitem[Chen(2021)]{1}[Chen(2021)] H. Chen and H.-G. Chen. \textit{Estimates of Dirichlet eigenvalues for a class of sub-elliptic operators},
Proc. London Math. Soc., \textbf{122} (2021), 808-847.

\bibitem[Chow(2021)]{2}[Chow(2021)] Y. T. Chow, et al. \textit{Surface-localized transmission eigenstates, super-resolution imaging, and
pseudo surface plasmon modes}, SIAM J. Imaging Sci., \textbf{14} (2021), 946-975.

\bibitem[Cannarsa(2010)]{3}[Cannarsa(2010)] P. Cannarsa, J. Tort, M. Yamamoto. \textit{Determination of source terms in a degenerate parabolic equation}, Inverse Problems, \textbf{26(10)} (2010), 105003.

\bibitem[Cannon(1991)]{4}[Cannon(1991)] J. R. Cannon, Y. Lin, S. Wang. \textit{Determination of a control parameter in a parabolic partial differential equation}, J. Aust. Math. Soc. Ser. B, \textbf{33} (1991), 149-163.

\bibitem[CannonRun(1991)]{5}[CannonRun(1991)] J. R. Cannon, W. Rundell. \textit{Recovering a time dependent coefficient in a parabolic differential equation}, J. Math. Anal. Appl., \textbf{160} (1991), 572-582

\bibitem[Gol'dman(1970)]{6}[Gol'dman(1970)] M. L. Gol’dman. \textit{Estimates of the eigenfunctions of Laplace’s operator in certain domains},
Differ. Uravn., \textbf{6} (1970), 2030-2040.

\bibitem[HazLesIsm(2019)]{7}[HazLesIsm(2019)] A. Hazanee, D. Lesnic, M. I. Ismailov, N. B. Kerimov. \textit{Inverse time-dependent source problems for the heat equation with nonlocal boundary conditions}, Applied Mathematics and Computation, \textbf{346} (2019), 800-815.

\bibitem[Ivanchov(2003)]{8}[Ivanchov(2003)] M. Ivanchov. \textit{Inverse Problems for Equations of Parabolic Type}, Math. Stud. Monogr. Ser., 10 VNTL Publishers, Lviv, 2003.

\bibitem[KarOzaSur(2020)]{9}[KarOzaSur(2020)] M. Karazym, T. Ozawa, D. Suragan. \textit{Multidimensional inverse Cauchy problems for evolution equations}, Inverse Probl. Sci. En., \textbf{28} (2020), 1582-1590

\bibitem[Lesnic(2022)]{10}[Lesnic(2022)] D. Lesnic. \textit{Inverse Problems with Applications in Science and Engineering}, CRC Press, Boca Raton, FL, 2022.

\bibitem[IsmOzaSur(2024)]{11}[IsmOzaSur(2024)] M. I. Ismailov, T. Ozawa, D. Suragan. \textit{Inverse problems of identifying the time-dependent source coefficient for subelliptic heat equations}, Inverse Problems and Imaging, \textbf{18(4)} (2024), pp.813-823.

\bibitem[Evan(2010)]{12}[Evan(2010)] L.C. Evans, \textit{Partial Differential equation}, AMS, Providence, 2010.

\bibitem[Dragomir(2022)]{13}[Dragomir(2022)]  S.S. Dragomir, \textit{Some Gronwall Type Inequalities and Applications}, RGMIA Monographs, Victoria University, Australia, 2002.

\bibitem[KhNa(2020)]{14} [KhNa(2020)] Kharin S.N., Nauryz T.A. \textit{Two-phase Stefan problem for generalized heat equation.} News of the National Academy of Sciences of the Republic of Kazakhstan, Physico-Mathematical Series \textbf{2(330)}, (2020), 40-49.

\bibitem[KhaNau(2021)]{15}[KhaNau(2021)] Kharin S.N., Nauryz T.A. \textit{Solution of two-phase cylindrical direct Stefan problem by using special functions in electrical contact processes.} International Journal of Applied Mathematics, \textbf{34(2)}, (2021), 237–248.

\bibitem[SarErNauH(2017)]{16}[SarErNauH(2017)] Sarsengeldin M.M., Erdogan A.S., Nauryz T.A., Nouri H. \textit{An approach for solving an inverse spherical two-phase Stefan problem arising in modeling of electric contact phenomena.} Mathematical Methods in the Applied Sciences, \textbf{2017} (2017), 850–859.

\bibitem[NauBri(2023)]{17}[NauBri(2023)] Nauryz T., Briozzo A.C. \textit { Two-phase Stefan problem for generalized heat equation with nonlinear thermal coefficients.} Nonlinear Analysis: Real World Applications, \textbf{74}, (2023), 103944.

\bibitem[KhaNau(2021)]{18}[KhaNau(2021)] Kharin S.N., Nauryz T.A. \textit{Mathematical model of a short arc at the blow-off repulsion of electrical contacts during the transition from metallic phase to gaseous phase.} AIP Conference Proceedings, \textbf{2325},(2021), 020007.     

\bibitem[NauKha(2022)]{19}[NauKha(2022)] Nauryz T.A., Kharin S.N. \textit{Existence and uniqueness for one-phase spherical Stefan problem with nonlinear thermal coefficients and heat flux condition}, International Journal of Applied Mathematics, \textbf{35(5)}, (2022), 645-659.   

\bibitem[NaKh(2021)]{20} [NaKh(2021)] Kharin S.N., Nauryz T.A. One-phase spherical Stefan problem with temperature dependent coefficients. Eurasian Mathematical Journal, \textbf{12(1)}, (2021), pp. 49-56.   

\bibitem[BoBrKhNa(2024)]{21}[BoBrKhNa(2024)] Bollati J., Brizzo A.C., Kharin S.N., Nauryz T.A. \textit{Mathematical model of thermal phenomena of closure electrical contact with Joule heat source and nonlinear thermal coefficients}, Nonlinear Journal of Non-Linear Mechanics, \textbf{158}, (2024), 104568.

\bibitem[NauKas(2023)]{22}[NauKas(2023)] Nauryz T., Kassabek S.A., \textit{Mathematical modeling of the heat process in closure electrical contacts with a heat source.} International Journal of Non-Linear Mechanics, \textbf{157} (2023), 104533.

\bibitem[KasSur(2022)]{23}[KasSur(2022)] S. A. Kassabek,  D. Suragan,  \textit{Numerical approximation of the one-dimensional inverse Cauchy-Stefan problem using heat polynomials methods},  Computational and Applied Mathematics, \textbf{41:189}, (2022).

\bibitem[KasSur(2023)]{24}[KasSur(2023)] S. A. Kassabek,  D. Suragan,  \textit{A heat polynomials method for the two-phase inverse Stefan problem}, Computational and Applied Mathematics, \textbf{42:129}, (2023). 
}

\bibitem[KerIsm(2015)]{25}[KerIsm(2015)] N.B. Kerimov, M. I. Ismailov,  \textit{Direct and inverse problems for the heat equation with a dynamic-type boundary condition}, IMA Journal of Applied Mathematics, \textbf{80(5)} (2015), pp. 1519–1533,

\bibitem[DerSad(2024)]{26}[DerSad(2024)] B. Derbissaly, M. Sadybekov, \textit{ Inverse source problem for multi-term time-fractional di usion equation with nonlocal boundary conditions}, AIMS Mathematics, \textbf{9(4)} (2024), pp. 9969-9988.

\end
{thebibliography}
\end{document}